\newtheorem{theorem}{Theorem}[section]
\newtheorem{lemma}{Lemma}[section]
\newtheorem{corollary}{Corollary}[section]
\theoremstyle{definition}
\def\cal{\mathcal}
\begin{document}
\title[The centrally symmetric $V$-states for active scalar equations. Two-dimensional Euler with cut-off]
{The centrally symmetric $V$-states for active scalar equations.
Two-dimensional Euler with cut-off}

\author{Sergey A. Denisov}

\thanks{{\it Keywords: $V$--states,  Euler equation, dynamics of patches} \\
\indent{\it 2000 AMS Subject classification:} primary 76B99,
secondary 76F99}

\address{
\begin{flushleft}
University of Wisconsin--Madison\\  Mathematics Department\\
480 Lincoln Dr., Madison, WI, 53706, USA\\
  denissov@math.wisc.edu
\end{flushleft}
  }
\maketitle

\maketitle
\begin{abstract}
We consider the family of active scalar equations on the plane and study the dynamics of two centrally symmetric patches. We focus on the two-dimensional Euler equation written in the vorticity form and consider its truncated version. For this model, a non-linear and non-local evolution equation is studied and a family of stationary solutions $\{y(x,\lambda)\}, x\in [-1,1], \lambda\in (0,\lambda_0)$ is found.  For these functions, we have $y(x,\lambda)\in C^\infty(-1,1)$ and
$\|y(x,\lambda)-|x|\|_{C[-1,1]}\to 0, \,\,\lambda\to 0$. The relation to the $V$-states observed numerically in \cite{over2, will} is discussed.

\end{abstract} \vspace{1cm}
\Large

\section{Introduction.}

In this paper, we study a certain class of the active scalar equations on the plane. Suppose we are given a function $D(z), z=(x,y)\in \mathbb{R}^2$ that satisfies the following properties: $D$ is radially symmetric, i.e., $D(z)=d(|z|)$, $d(r)$ is monotonically increasing and smooth on $(0,\infty)$. Consider the following transport equation
\begin{equation}\label{act-scal}
\dot \theta=\nabla \theta \cdot \left(\nabla^\perp A\theta+S\right),
\quad \theta(0,x,y)=\theta_0(x,y)
\end{equation}
where
\[
Af=\int_{\mathbb{R}^2} D(z-\xi)f(\xi)d\xi, \quad z,\xi\in
\mathbb{R}^2, \quad \nabla^{\perp}=(-\partial_y,\partial_x)
\]

 The symbol $S(t,z)$ will denote
the strain, i.e., an exterior velocity which is assumed to be incompressible and sufficiently regular. By choosing different $d(r)$ and $S(z,t)$, we can cover some important cases. For example, taking $d(r)=-r^{-1}$ and
$S(z,t)=0$ corresponds to the so-called surface-quasigeostrophic equation (SQG) for which only the local in time solvability is known for sufficiently smooth $\theta_0$ (see \cite{chae} for the recent development). If $d(r)=\log r$ and $S(z,t)=0$, one recovers the equation for vorticity for two-dimensional non-viscous Euler equation. In this situation, the existence of
global solution $\theta(t,x,y)$ has been known for a long time \cite{wol}.\smallskip

In this paper, we mostly focus on the Euler case, however, we will be digressing to more general situations later in the text. Let us assume $\theta_0\in L^\infty(\mathbb{R}^2)\cap L^1(\mathbb{R}^2)$. In that case, the existence of the global weak solution was established by Yudovich \cite{yud}. If $\theta_0=\chi_{\Omega_0}$ with some domain $\Omega_0: |\Omega_0|<\infty$, then one has the evolution of the ``patch" as $\theta(t,z)=\chi_{\Omega(t)}$ and $\Omega(t)$ is homeomorphic to $\Omega_0$.\smallskip

 We consider
the case when $\theta(0,z)=\chi_{\Omega_0}(z)+\chi_{-\Omega_0}(z)$
and $-\Omega_0=\{-z,\, z\in \Omega_0\}$. Assuming  $\Omega_0\cap
-\Omega_0=\varnothing$, one has
$\theta(t,z)=\chi_{\Omega(t)}(z)+\chi_{-\Omega(t)}(z)$, i.e., it
represents evolution of the centrally symmetric pair of patches (the
preservation of central symmetry is a simple feature of dynamics).
We also take $\Omega_0$ to be simply connected with smooth boundary, i.e. $\partial\Omega_0\in C^\infty$. Under these assumptions, we have $\partial\Omega(t)\in C^\infty$ for all time \cite{chem, berto}.

\smallskip

Two problems arise naturally in the study of this model. The first one addresses the following question. Let ${\mathfrak d}(t)={\rm dist}(\Omega(t), 0)$. Is it possible for ${\mathfrak d}(t)\to 0, \quad t\to\infty$? The Yudovich theory gives a lower bound: ${\mathfrak d}(t)>\exp(-\exp(Ct))$ and one can study its sharpness.   Naturally, the convergence of $\mathfrak{d}(t)$ to zero implies the merging of the patches as the configuration is centrally symmetric. In the recent paper \cite{den1}, the sharpness of the double exponential estimate was established (even up to a constant $C$) in the case when equation was considered with the strain $S$ which was assumed to be incompressible, odd, and Lipschitz-regular.  We have more evidence of the singularity formation in \eqref{act-scal}: in \cite{kis-sv}, the phenomenon of double exponential merging was proved for the Euler equation on the disc, however the presence of the boundary was used in a substantial way. For the related SQG model, there is a numerical evidence that two patches merge in finite time \cite{gang}.

 The second problem, intimately related to the first one, is  existence of the quasi-stationary states, i.e., the configurations of two centrally symmetric patches that rotate with constant angular velocity around the origin without changing the shape (the so-called $V$-states). For the 2D Euler, there is a numerical evidence (e.g., \cite{will,over2}
and references there) for the existence of the parametric
curve of these $V$-states: $V_\lambda \cup -V_\lambda$, i.e.,
$\Omega(t)=R_{\omega t}V_\lambda$, where $R_\theta$ denotes the
rotation around the origin by the angle $\theta$. Here, ${\rm dist}
(V_\lambda, -V_\lambda)=2\lambda$ and $\lambda\in [0,\lambda_0)$.
For $\lambda>0$, the boundary $\Gamma_\lambda=\partial V_\lambda$
seems to be smooth but the two patches form a sharp corner of $90$
degrees and touch at the origin when $\lambda=0$. Assuming existence
of the $V$--states in the contact position and their regularity away
from the origin, Overman \cite{over1} did a careful analysis around
the zero. In particular, he explained why the $90$ degrees is the
only possible nontrivial angle of the contact.\smallskip

The paper consists of eight sections and an Appendix. In the second section, the model with cut-off is introduced and the main result is stated. Section 3 gives some preliminaries and section 4 explains the strategy of the proof. In the sections 5, 6, and 7, we prove some auxiliary statements that are used in section 8 to complete the proofs of the main results. The Appendix has four lemmas we use in the main text.
\smallskip

{\it Notation used in the paper.}\quad  The symbol $\dot Lip[0,T]$ will indicate the following space $\dot
Lip [0,T]$$=\{f'\in L^\infty[0,T], f(0)=0\}$ equipped with the norm
\[
\|f\|_{\dot Lip[0,T]}=\sup_{x\in [0,T]}|f'(x)|
\]
We will use the following (non-standard)
notation
\[
\log^+ x=|\log x|+1, \quad x>0
\]
Let $\omega(x)$ be a smooth function such that $\omega(x)=1$ on
$|x|<1/2$, $\omega(x)=0$ on $|x|>1$ and $0\leq \omega(x)\leq 1$.
 For
a parameter $a>0$, we consider $\omega_a(x)=\omega(x/a)$ and
$\omega_a^{c}(x)=1-\omega_a(x)$. Given two positive functions $F_1$
and $F_2$, we write $F_1\lesssim F_2$ if there is a constant $C$
such that
\[
F_1<CF_2~,\quad C>0
\]
We write $F_1\sim F_2$ if
\[
F_1\lesssim F_2\lesssim F_1\,\,.
\]
The expression $``a\ll 1"$ will be a short-hand for ``there is a sufficiently small $a_0$ such that $0<a<a_0$". For the function $P(x)$, we write
$
\Delta_{x_1,x_2}P=P(x_1)-P(x_2)
$.
\bigskip

\section{The model with cut-off and the main result.}

 Consider $d(r)=\log r$ (2d Euler). If $\Omega_{sc}(0)$ is
a simply connected domain with smooth boundary, the evolution of
$\Gamma_{sc}(t)=\partial \Omega_{sc}(t)$ is governed by the
following integro-differential equation (see, e.g., \cite{mb},
formula (8.56); this is a corollary of $\nabla_z D(|z-\xi|)=-\nabla_{\xi} D(|z-\xi|)$ and the Green's formula):

\begin{equation}\label{global}
\dot z(t,\alpha)=C\int_{0}^{2\pi} z'(t,\beta)
\log|z(t,\beta)-z(t,\alpha)|d\beta, \quad \alpha\in[0,2\pi)
\end{equation}
where $C$ is an absolute constant and $z(\alpha,t)$ is anti-clockwise parameterization of the curve $\Gamma_{sc}(t)$. In particular, the right-hand side gives the velocity at any point on the boundary, $z(t,\alpha)$. If one has two simply connected domains $\Omega^{(1)}$ and $\Omega^{(2)}$ with vorticity equal to $1$ inside each of them, then the velocity at any point
$z_1\in \Gamma^{(1)}$ is given by
\[
C\left(\int_{0}^{2\pi} z_1'(t,\beta)
\log|z_1(t,\beta)-z_1(t,\alpha)|d\beta+\int_{0}^{2\pi} z_2'(t,\beta)
\log|z_2(t,\beta)-z_1(t,\alpha)|d\beta\right), \, \alpha\in[0,2\pi)
\]
Assume now that two patches are merging at the origin. Then, we can introduce the local chart in $\{(x,y): |x|<\delta, |y|<\delta\}$ and parameterize the corresponding contours by $(x,\mu_1(t,x))$ and $(x,\mu_2(t,x))$, see Figure~1.
Notice that the velocity at any point $(x,\mu_1(t,x))$ can now be written as
\[
C\left(\int_{-\delta}^{\delta}(1,\mu_1'(t,\xi))\log \left((x-\xi)^2+(\mu_1(t,x)-\mu_1(t,\xi))^2\right)d\xi-\right.
\]
\[
\left.\int_{-\delta}^{\delta}(1,\mu_2'(t,\xi))\log \left((x-\xi)^2+(\mu_1(t,x)-\mu_2(t,\xi))^2\right)d\xi\right)+R(x,\mu_1(t,x))
\]
where the negative sign in front of the second integral comes from the anti-clockwise parameterization for the contour $\Gamma^{(2)}$. Here $R$ is the velocity induced by the $\Bigl(\Gamma^{(1)}\cup\Gamma^{(2)}\Bigr)\cap \{|z|>\delta\}$. Clearly, $R$ is smooth inside $\{|x|<\delta/2, |y|<\delta/2\}$ and is equal to zero at the origin in the case when $\Omega^{(2)}=-\Omega^{(1)}$. Dropping this $R$ as a term negligible around the origin, we end up with the following expression for the velocity
\[
C\int_{-\delta}^{\delta} (A-B, \mu_1'(t,\xi)A-\mu_2'(t,\xi)B)d\xi
\]
where
\[
A=\log \left((x-\xi)^2+(\mu_1(t,x)-\mu_1(t,\xi))^2\right), \, B=\log \left((x-\xi)^2+(\mu_1(t,x)-\mu_2(t,\xi))^2\right)
\]
at every point $(x,\mu_1(t,x)), |x|<\delta$. Following, e.g., \cite{rodr}, we notice that the subtraction of any tangential vector from the velocity does not change the evolution of the contour. Thus, we subtract the vector-field
\[
C\int_{-\delta}^\delta (A-B)d\xi \cdot (1,\mu_1'(t,x))
\]
which gives a modified velocity
\[
u_{mod}(x,\mu_1(t,x))=C\left(0,\int_{-\delta}^\delta (\mu_1'(t,\xi)A-\mu_2'(t,\xi)B-\mu_1'(t,x)A+\mu_1'(t,x)B)d\xi\right)
\]
Notice that the first component of this vector is zero so we have an equation
\begin{equation}\label{lodka}
\dot\mu_1(t,x)=C\int_{-\delta}^\delta (\mu_1'(t,\xi)A-\mu_2'(t,\xi)B-\mu_1'(t,x)A+\mu_1'(t,x)B)d\xi, \quad |x|<\delta/2
\end{equation}
for the evolution of $\mu_1(t,x)$. The similar formula can be obtained for $\mu_2(t,x)$.
We will focus on the situation when the contours are centrally symmetric so $\mu_2(t,x)=-\mu_1(t,-x)$. That gives us the following equation for $\mu(t,x)=\mu_1(t,x)$:
\begin{equation}\label{coff}
\dot\mu(t,x)=C\int_{-\delta}^\delta (\mu'(t,x)-\mu'(t,\xi))\log \left(\frac{(x+\xi)^2+(\mu(t,x)+\mu(t,\xi))^2}{(x-\xi)^2+(\mu(t,x)-\mu(t,\xi))^2}\right)d\xi
\end{equation}
We allow this equation to hold on all of $[-\delta,\delta]$ and call it {\it an equation with cut-off}. The rescaling of time makes it possible to adjust the value of $C$. Moreover, the equation \eqref{coff} should be complemented by
\begin{equation}
\mu(0,x)=\mu_0(x), \quad \mu(t,\delta)=c(t)
\end{equation}
where $\mu_0(x)$ gives an initial position of the curve and $c(t)$ defines the {\it control} or the boundary value for this local transport equation.
\vspace{0.5cm}

\setlength{\unitlength}{0.150mm}
\begin{picture}(585,393)(15,-430)
  \allinethickness{0.254mm}\put(30,-260){\vector(1,0){695}} 
        \allinethickness{0.254mm}\put(375,-490){\vector(0,1){455}} 
        \allinethickness{0.254mm}\path(200,-95)(550,-95)(550,-445)(200,-445)(200,-95) 
        \allinethickness{0.254mm}\path(285,-170)(470,-170)(470,-355)(285,-355)(285,-170) 
        \spline(170,-165)(260,-250)(380,-210)(465,-220)(615,-165)

        \spline(170,-390)(240,-325)(335,-300)(400,-350)(500,-330)(615,-400)

        \put(595,-141){\shortstack{$\mu_1(t,x)$}} 
        \put(615,-371){\shortstack{$\mu_2(t,x)$}} 
        \put(555,-91){\shortstack{$(\delta,\delta)$}} 
        \put(430,-160){\shortstack{$(\delta/2,\delta/2)$}} 
        \put(715,-286){\shortstack{$X$}} 
        \put(345,-51){\shortstack{$Y$}} 
        \put(350,-283){\shortstack{$O$}} 
        
         \put(705,-490){\shortstack{Figure 1}}
\end{picture}
\vspace{1.5cm}

In the case of the general kernel $D$ in \eqref{act-scal}, the resulting equation can be reduced to the following form
\begin{equation}\label{general-sher}
\dot \mu(t,x)=C\int_{-\delta}^\delta
(\mu'(t,x)-\mu'(t,\xi))K(x,\xi)d\xi, \quad \mu(0,x)=\mu_0(x), \quad \mu(t,1)=c(t)
\end{equation}
where $C$ is a floating constant (can be adjusted by time scaling),
\[
K(x,\xi)=H((\mu(t,x)+\mu(t,\xi))^2+(x+\xi)^2)-H((\mu(t,x)-\mu(t,\xi))^2+(x-\xi)^2)
\]
 and $H(r)=d(\sqrt r)$. If the function $d(r)=\log r$ or is homogeneous, the scaling in $z$ allows one to assume that $\delta=1$ which we will do from now on (see Figure 2).

 \vspace{1cm}

\setlength{\unitlength}{0.150mm}
\begin{picture}(585,393)(15,-430)
        \allinethickness{0.254mm}\put(15,-235){\vector(1,0){585}} 
        \allinethickness{0.254mm}\put(290,-425){\vector(0,1){385}} 

        \spline(100,-45)(295,-230)(500,-85)
        \spline(100,-390)(295,-250)(500,-430)

        \put(560,-250){\makebox(0,0)[cc]{\shortstack{$X$}}} 
        \put(255,-51){\shortstack{$Y$}} 
        \put(265,-260){\shortstack{$O$}} 
        \allinethickness{0.254mm}\path(160,-100)(435,-100)(435,-375)(160,-375)(160,-100) 
        \put(450,-260){\shortstack{$1$}} 
        \put(460,-71){\shortstack{$\mu(t,x)$}} 
        \put(560,-430){\shortstack{Figure $2$}}
\end{picture}
\vspace{1cm}

We think that problem with the cut-off might serve as a good model to study the merging of the central pair. Indeed, the active scalar equations are nonlocal but it is believed that the singularity of the convolution kernel at $r=0$ is responsible for the strong instability (e.g., merging). That suggests a local version of the equation \eqref{global} might be interesting to study first. For this purpose, we take \eqref{general-sher} as a model.  The local in time solvability of \eqref{general-sher} is not known and will be addressed elsewhere.  However the ``raison d'\^{e}tre" is different and can be formulated as the

\bigskip

{\bf Problem 1.} {\it Is there a smooth solution to
\begin{equation}\label{ur-1}
\dot \mu(t,x)=\int_{-1}^1 (\mu'(t,x)-\mu'(t,\xi))K(x,\xi)d\xi, \quad
x\in (-1, 1)
\end{equation}
such that
\[
\mu(t,x)\to y_0(x)=|x|, \quad t\to \infty
\]
If so, what are the estimates (lower and upper) on
\[
\mathfrak{d}(t)=\min_{x\in [-1,1]} |\mu(t,x)-y_0(x)|?
\]
} The function $y_0(x)$ plays a very special role.
It is a stationary solution to (\ref{ur-1}) and it mimics locally the limiting case ($t=\infty$) considered in \cite{den1}. The advantage of the model \eqref{general-sher} is that we known this singular stationary configuration exactly and the problem~1 asks for the analysis of its dynamical stability. In particular, is it possible for ${\mathfrak d}(t)$ to converge to zero as double-exponential in the case when $d(r)=\log r$ (2d Euler)? In \cite{den1}, this question was answered affirmatively assuming that a regular strain is allowed (see Appendix in \cite{den1}). In other words, an approximate solution to \eqref{ur-1} was constructed and its self-similarity analysis was performed. The question remains though whether this strain can be dropped and this is the content of the problem 1.\smallskip

The problem 1 seems hard. The important step in understanding it is
to address the question of the stationary states for
(\ref{ur-1}).\bigskip

{\bf Problem 2.} {\it Find the family of  even positive functions
$y(x,\lambda)\in C^1[-1,1]$ such that
\begin{equation}\label{ur-2}
\int_{-1}^1 (y'(x,\lambda)-y'(\xi,\lambda))K(x,\xi)d\xi=0, \quad x\in [-1,1]
\end{equation}
and
\[
y(0,\lambda)=\lambda, \quad \lambda\in (0,\lambda_0); \quad
\|y(x,\lambda)-|x|\|_{C[-1,1]}\to 0, \quad \lambda\to 0
\]
}\smallskip \noindent Quite naturally, we will call these functions ``the even
$V$--states for the model with cut-off". Since the original problem
of the patch evolution is invariant with respect to rotations, we
expect the existence of other families of $V$--states that are not
necessarily even. \bigskip

The main result of this paper is the following theorem which
contains a solution to the problem 2 for the case of 2d Euler.
\begin{theorem}\label{main}
There is a  family of  even positive functions $y(x,\lambda)\in C^1[-1,1]$
such that
\begin{equation}\label{ura-2}
\int_{-1}^1 (y'(x,\lambda)-y'(\xi,\lambda))\log\left(
\frac{(x+\xi)^2+(y(x,\lambda)+y(\xi,\lambda))^2}{(x-\xi)^2+(y(x,\lambda)-y(\xi,\lambda))^2}
\right)d\xi=0, \quad x\in [-1,1]
\end{equation}
and
\[
y(0,\lambda)=\lambda, \quad \lambda\in (0,\lambda_0); \quad
\lim_{\lambda\to 0}\|y(x,\lambda)-|x|\|_{C[-1,1]}=0
\]
\end{theorem}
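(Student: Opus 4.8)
The plan is to resolve the $90^\circ$ corner of $y_0(x)=|x|$ by a self-similar blow-up. Put $y(x,\lambda)=\lambda\,Y(x/\lambda)$ and $L=1/\lambda$; since the factor $\lambda^2$ common to the numerator and denominator of the logarithm in \eqref{ura-2} cancels, the equation becomes the $\lambda$-free relation
\[
\mathcal F_L[Y](s):=\int_{-L}^{L}\bigl(Y'(s)-Y'(\sigma)\bigr)\log\frac{(s+\sigma)^2+(Y(s)+Y(\sigma))^2}{(s-\sigma)^2+(Y(s)-Y(\sigma))^2}\,d\sigma=0,\qquad s\in[-L,L],
\]
together with $Y(0)=1$, and the limit $\lambda\to0$ becomes $L\to\infty$. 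Observe that if $\{Y_L\}$ is a family of even positive $C^1$ solutions on $[-L,L]$ with $Y_L(0)=1$ and $\sup_{|s|\le L}|Y_L(s)-|s||=o(L)$, then automatically $\|y(\cdot,\lambda)-|x|\|_{C[-1,1]}=\lambda\sup_{|s|\le L}|Y_L(s)-|s||\to0$; so it suffices to produce such a family for $L$ large.

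The first task is to guess the limiting profile on the whole line. Folding the $\sigma\mapsto-\sigma$ contributions by evenness and expanding the integrand for $|\sigma|$ large, one finds that the part of the integral coming from $|\sigma|$ near $L$ is, to leading order, a multiple of $\int^{L}\sigma^{-1}\bigl(Y(s)Y'(s)-s\bigr)\,d\sigma$. For this not to obstruct a bound uniform in $L$ the profile must satisfy $Y(s)Y'(s)=s$ to leading order, i.e., after the normalization $Y(0)=1$, $Y_0(s)=\sqrt{1+s^2}$; equivalently, the expected $V$-state is the hyperbola branch $y_0(x,\lambda)=\sqrt{x^2+\lambda^2}$. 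With this choice the outer integrand decays like $\sigma^{-2}$ rather than $\sigma^{-1}$, and near the diagonal $\sigma=s$ the combination $\bigl(Y'(s)-Y'(\sigma)\bigr)\log(\cdots)$ is $O\bigl((\sigma-s)\log|\sigma-s|\bigr)$, integrable, with its leading term odd about $\sigma=s$ — so $Y''$ does not enter $\mathcal F_L$ at top order. One then verifies that $Y_0$ solves the whole-line equation $\mathcal F_\infty[Y_0]=0$ (or, failing that, first constructs the whole-line profile $Y_0$ with $Y_0(s)-\sqrt{1+s^2}\to0$ by the same scheme as below); in either case $\|\mathcal F_L[Y_0]\|\to0$ as $L\to\infty$ by a tail estimate. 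Making this approximate-solution estimate precise, with all the logarithmic-kernel bookkeeping at the diagonal and at the endpoints $\pm L$, is one of the two technical cruxes.

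Next I would look for $Y=Y_0+\eta$ with $\eta$ even, $\eta(0)=0$, in a weighted H\"older (or $C^1$) space on $[-L,L]$ encoding decay as $s\to\pm L$, expand $\mathcal F_L[Y_0+\eta]=\mathcal F_L[Y_0]+\mathcal L_L\eta+\mathcal N_L[\eta]$ ($\mathcal L_L$ the linearization at $Y_0$, $\mathcal N_L$ the quadratic-and-higher remainder), and solve the fixed-point equation $\eta=-\mathcal L_L^{-1}\bigl(\mathcal F_L[Y_0]+\mathcal N_L[\eta]\bigr)$ by the contraction mapping principle on a small ball. This needs: (i) $\mathcal L_L$ invertible on $\{\eta\ \mathrm{even},\ \eta(0)=0\}$ with $\|\mathcal L_L^{-1}\|$ controlled uniformly in $L$; (ii) $\|\mathcal F_L[Y_0]\|$ small; (iii) $\mathcal N_L$ locally Lipschitz near $0$ with small constant. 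The constraint $\eta(0)=0$, forced on us by $Y(0)=1$, is exactly what makes (i) possible: the equation has the scaling symmetry $\mathcal F_\infty[\mu^{-1}Y(\mu\,\cdot)](s)=\mu^{-1}\mathcal F_\infty[Y](\mu s)$, and differentiating the family $\mu^{-1}Y_0(\mu\,\cdot)$ in $\mu$ at $\mu=1$ exhibits $sY_0'(s)-Y_0(s)=-(1+s^2)^{-1/2}$ in $\ker\mathcal L_\infty$; this mode is even and nonzero at $s=0$ (in the original variables it is the tangent to the family $\{\sqrt{x^2+\lambda^2}\}_\lambda$), so imposing $\eta(0)=0$ kills it. One then shows $\mathcal L_\infty$ has no other kernel on this subspace and satisfies an elliptic/Fredholm-type estimate, and transfers this to $\mathcal L_L$ for $L$ large, carefully controlling the truncation to $[-L,L]$.

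The contraction gives a unique small $\eta=\eta_L$, whence $Y_L=Y_0+\eta_L$ and
\[
y(x,\lambda)=\sqrt{x^2+\lambda^2}+\lambda\,\eta_{1/\lambda}(x/\lambda),\qquad 0<\lambda<\lambda_0:=1/L_0 .
\]
Evenness is built in; positivity follows from $Y_L\ge\sqrt{1+s^2}-\|\eta_L\|_\infty\ge1-o(1)>0$; $C^1[-1,1]$ is inherited from the space containing $\eta_L$, while $C^\infty(-1,1)$ comes from bootstrapping in the rewritten equation $y'(x)=\bigl(\int_{-1}^1 y'(\xi)K(x,\xi)\,d\xi\bigr)\big/\bigl(\int_{-1}^1 K(x,\xi)\,d\xi\bigr)$, whose right-hand side gains derivatives once $y$ is $C^1$ and bounded away from $0$; and $\|y(\cdot,\lambda)-|x|\|_{C[-1,1]}\le\lambda\bigl(1+\|\eta_{1/\lambda}\|_\infty\bigr)\to0$. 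I expect the main obstacle to be point (i): the limiting domain is all of $\mathbb R$, so there is no compactness, the kernel of $\mathcal L_\infty$ must be located by hand (which the scaling structure permits) and, above all, its invertibility has to be made quantitative and stable under truncation — together with the companion approximate-solution bound (ii), whose proof hinges on precisely the logarithmic-kernel cancellations that singled out the profile $\sqrt{1+s^2}$ in the first place.
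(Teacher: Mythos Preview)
Your overall strategy coincides with the paper's: perturb off the hyperbola $\sqrt{x^2+\lambda^2}$, linearize, and close a contraction once the linearization is uniformly invertible and the defect of the approximate solution is small. The paper, however, carries this out in the \emph{original} variables on the fixed interval $[0,1]$ rather than on the blown-up interval $[-L,L]$: it writes $y=\sqrt{\lambda^2+f^2(x)}$ with $f\in\dot Lip[0,1]$, $f(0)=0$ (this is your constraint $\eta(0)=0$), defines
\[
F(f,\lambda)=\frac{ff'\displaystyle\int_0^1 K_1\,d\tau-\sqrt{\lambda^2+f^2}\displaystyle\int_0^1 y'K_2\,d\tau}{x\sqrt{x^2+\lambda^2}\,\log^+(x^2+\lambda^2)},
\]
and runs the implicit-function scheme around $f=x$. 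Two features of this setup are worth absorbing. First, the normalization by $x\sqrt{x^2+\lambda^2}\log^+(x^2+\lambda^2)$ is not cosmetic: in your rescaled variables one has $\mathcal F_L[Y_0](s)=O(1)$, not $o(1)$, for bounded $s$ (the $\log L$-divergent pieces cancel but the next order does not), so your claim that $\|\mathcal F_L[Y_0]\|\to0$ is only true after dividing by the $\log L$ coming from the tail --- precisely the paper's $\log^+$ weight. Second, by staying on the fixed compact interval the paper can take a genuine operator limit $L_\lambda\to L_0$ as $\lambda\to0$, where $L_0$ is the linearization at the \emph{singular} profile $y_0=|x|$ on $[0,1]$; there compactness and Fredholm theory apply directly, and the whole weighted-space/noncompact-domain apparatus you anticipate is avoided.

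On the point you correctly flag as the main obstacle --- invertibility of the linearization --- the paper's argument is different from what you sketch. It reduces $L_0u=0$ on $\dot Lip[0,1]$ to showing $I+O_0$ has trivial kernel in $L^\infty[0,1]$ with $O_0$ compact; then, multiplying the kernel equation by $u$ and integrating, it obtains a sum of quadratic forms and shows each is nonnegative. The decisive ones are $\int_0^1\!\!\int_0^1\frac{u(x)u(\xi)}{x+\xi}\,dxd\xi\ge0$ (positivity of the Hilbert matrix) and $\int_0^1\!\!\int_0^1\frac{x\xi\,u(x)u(\xi)}{(x^2+\xi^2)(x+\xi)}\,dxd\xi\ge0$, the latter via Schur's theorem on the Hadamard product of the positive-definite kernels $(x+\xi)^{-1}$ and $x\xi/(x^2+\xi^2)$. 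No analysis of $\mathcal L_\infty$ on $\mathbb R$ is needed, and the scaling mode you identify never has to be removed by hand because the fixed endpoints of $[0,1]$ break the dilation symmetry outright. Your route may be workable, but you would still need a substitute for this positivity computation to exclude further kernel of $\mathcal L_\infty$, and that is exactly the step your outline leaves open.
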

\bigskip

{\bf Remark.} This result does not immediately imply any progress on problem 1, however the developed technique might be
useful.

{\bf Remark.} The model with a cut-off we introduced is only a model, obviously. However, in the case of 2d Euler (or SQG) equation on the torus $\mathbb{T}^2=[-1,1]^2$, the analog of $y_0(x)$ is the following configuration: $\theta_s(x,y)={\rm sign}\, x\cdot {\rm sign}\, y$ which represents two patches that touch each other at the $90$ degrees angle. The method developed in this paper is likely to be directly applicable to the bifurcation analysis of this case which is NOT a model. We will address this issue elsewhere.

{\bf Remark.} The bifurcation analysis of the stationary states is a classical subject in the mechanics of fluids (see, e.g., \cite{sve1, zeng} for the recent developments). We, however, focus on the technically hard case when the {\it singular} stationary state is considered.\bigskip

\section{Preliminaries.}

The main result of this paper is solution to problem 2 in the case
of 2d Euler equation with a cut-off. We start with some
preliminary calculations for the general case as that will help us understand  problem 2 better.\smallskip

Assume that $y(x,\lambda)$ solves the problem 2. Since $y$ is even in $x$, we have
\begin{equation}\label{ur1}
y'(x,\lambda)\int_0^1 K_1(x,\xi)d\xi=\int_0^1
y'(\xi,\lambda)K_2(x,\xi)d\xi, \quad y(0,\lambda)=\lambda
\end{equation}
where
\[
K_1(x,\xi)=K(x,\xi)+K(x,-\xi)=
H((y(x)+y(\xi))^2+(x+\xi)^2)-H((y(x)-y(\xi))^2+(x-\xi)^2)
\]
\[
+H((y(x)+y(\xi))^2+(x-\xi)^2)-H((y(x)-y(\xi))^2+(x+\xi)^2)
\]
and
\[
K_2(x,\xi)=K(x,\xi)-K(x,-\xi)= H((y(x)+y(\xi))^2+(x+\xi)^2)-H((y(x)-y(\xi))^2+(x-\xi)^2)
\]
\[
-H((y(x)+y(\xi))^2+(x-\xi)^2)+H((y(x)-y(\xi))^2+(x+\xi)^2)
\]
We suppress the dependence of $y$ on $\lambda$ and just write
$y(x)$ here.

\subsection{The explicit solution for the model case}

Let us go back to the equation (\ref{act-scal}). Instead of taking
the singular kernels in the convolution, one can consider
the smooth bump $D(z)$. The ``typical" behavior around the origin
then would be, e.g.,
\[
D(z)=C+|z|^2+o(|z|^2), \quad |z|\to 0
\]
Keeping only the quadratic part, we get
\[
K(x,\xi)=4(y(x)y(\xi)+x\xi), \, K_1(x,\xi)=8y(x)y(\xi), \,
K_2(x,\xi)=8x\xi
\]
The equation \eqref{ur-1} takes the following form
\[
y'(x)y(x)\int_0^1 y(\xi)d\xi=x\int_0^1 \xi y'(\xi)d\xi
\]
which easily integrates to
\[
y(x)=\sqrt{\lambda^2+\frac{B}{A}x^2}
\]
where
\[
A=\int_0^1 y(x)dx, \, B=\int_0^1 x y'(x)dx
\]
We have the following compatibility equations
\[
\left\{
\begin{array}{cc}
B=\sqrt{\displaystyle \lambda^2+\frac BA}-A\\
A=\displaystyle \int_0^1\sqrt{\lambda^2+\frac BA x^2}dx
\end{array}
\right. \left\{
\begin{array}{cc}
B=\sqrt{\displaystyle \lambda^2+\frac BA}-A\\
\sqrt{AB}=\displaystyle \lambda^2\int_0^{\lambda^{-1}\sqrt{BA^{-1}}}
\sqrt{1+\xi^2}d\xi
\end{array}
\right.
\]
Introduce
\[
B/A=u, \quad AB=v
\]
Then
\[
v=\frac{u(\lambda^2+u)}{(u+1)^2}, \quad \sqrt
v=\lambda^2\int_0^{\lambda^{-1}\sqrt{u}}\sqrt{1+\xi^2}d\xi
\]
We assume that $\lambda\in (0,\lambda_0), \lambda_0\ll 1$ and
$|u-1|\ll 1$ and so $|v-1/4|\ll 1$. Therefore, if
\[
u=1+\alpha, \quad v=1/4+\beta, \quad \alpha,\beta\ll 1
\]
then
\[
\beta=\alpha/4+\lambda^2/4+O(\alpha^2+\lambda^2\alpha)
\]
and
\[
\alpha=2\beta-\lambda^2 \log\frac
1\lambda+O(\beta^2+\lambda^2\alpha)
\]
Thus,
\[
\beta=-0.5\lambda^2\log\frac
1\lambda+\frac{\lambda^2}{2}+O(\lambda^4\log^2\lambda),\quad
\alpha=-2\lambda^2\log\frac
1\lambda+\lambda^2+O(\lambda^4\log^2\lambda)
\]
This calculation shows that $V_\lambda$ exists and the asymptotics
in $\lambda\to 0$ can be easily established. Since
\[
y(x)=\sqrt{\lambda^2+(1+\alpha)x^2}, \quad \alpha<0
\]
the curve will intersect the line $y=x$ at the point
\[
x^*_\lambda=\frac{\lambda}{|\alpha|^{1/2}}= \left(2\log \frac
1\lambda\right)^{-1/2}(1+o(1))
\]
Now, let us address the question of self-similarity. Rescale
\[
\mu(\widehat x)=\lambda^{-1}y(\widehat
x\lambda)=\sqrt{1+(1+\alpha)\widehat x^2}, \quad |\widehat
x|<\lambda^{-1}
\]
This shows that
\[
\sup_{|\widehat x|<\lambda^{-1}}|\mu(\widehat x)-\sqrt{1+\widehat
x^2}|\to 0
\]
and so the self-similar behavior is global.

The model case we just considered is the situation in which the
interaction is substantially long-range and the self-similarity of
the stationary state is global. The curve that we have in the limit
is hyperbola. That seems like a common feature of many long-range
models and 2d Euler in particular as will be seen from the
subsequent analysis. However, for 2d Euler this self-similarity will
be proved only over $|x|<C\lambda$ with arbitrary fixed $C$. Notice
also that the analogous calculation is possible if the smooth
strains are imposed, e.g., a rotation.
\bigskip

\subsection{Properties of the kernels $K_1$ and $K_2$}

Below, we will write $K_{1(2)}(x,\xi,y)$ when we want to emphasize
the dependence of the kernel on the function $y$.

\begin{lemma}\label{lem1}
The following is true
\[
K_1(x,\xi,y)=4y(x)y(\xi)(H'(\eta_1)+H'(\eta_2)), \,
K_2(x,\xi,y)=4x\xi(H'(\alpha_1)+H'(\alpha_2))
\]
where
\[
\eta_1>(x+\xi)^2, \eta_2>(x-\xi)^2
\]
and
\[
\alpha_{1(2)}>(x-\xi)^2
\]
\end{lemma}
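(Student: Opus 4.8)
The plan is to rewrite each of $K_1$ and $K_2$ as a sum of two increments of $H$ in which the two arguments of $H$ differ in exactly one slot, and then to apply the mean value theorem, using that $H(r)=d(\sqrt r)$ is $C^1$ on $(0,\infty)$ with $H'(r)=d'(\sqrt r)/(2\sqrt r)>0$ since $d$ is smooth and increasing.

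For $K_1$, I would group the two terms of the defining sum that carry $(x+\xi)^2$ and, separately, the two that carry $(x-\xi)^2$:
\begin{align*}
K_1&=\Bigl[H\bigl((y(x)+y(\xi))^2+(x+\xi)^2\bigr)-H\bigl((y(x)-y(\xi))^2+(x+\xi)^2\bigr)\Bigr]\\
&\quad+\Bigl[H\bigl((y(x)+y(\xi))^2+(x-\xi)^2\bigr)-H\bigl((y(x)-y(\xi))^2+(x-\xi)^2\bigr)\Bigr].
\end{align*}
In each bracket the arguments of $H$ differ only through $(y(x)+y(\xi))^2-(y(x)-y(\xi))^2=4y(x)y(\xi)$, so the mean value theorem applied bracket by bracket produces a point $\eta_1$ between $(y(x)-y(\xi))^2+(x+\xi)^2$ and $(y(x)+y(\xi))^2+(x+\xi)^2$, and a point $\eta_2$ between $(y(x)-y(\xi))^2+(x-\xi)^2$ and $(y(x)+y(\xi))^2+(x-\xi)^2$, with $K_1=4y(x)y(\xi)\bigl(H'(\eta_1)+H'(\eta_2)\bigr)$. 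Since $y>0$, the upper endpoint in the first pair strictly exceeds $(x+\xi)^2$, hence $\eta_1>(x+\xi)^2$; the same argument gives $\eta_2>(x-\xi)^2$.

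For $K_2$ the bookkeeping is dual: I would pair instead the two terms carrying $(y(x)+y(\xi))^2$ and, separately, the two carrying $(y(x)-y(\xi))^2$, so that in each bracket only the $x$-slot changes, with increment $(x+\xi)^2-(x-\xi)^2=4x\xi$. The mean value theorem then gives $\alpha_1$ between $(y(x)+y(\xi))^2+(x-\xi)^2$ and $(y(x)+y(\xi))^2+(x+\xi)^2$, and $\alpha_2$ between $(y(x)-y(\xi))^2+(x-\xi)^2$ and $(y(x)-y(\xi))^2+(x+\xi)^2$, so that $K_2=4x\xi\bigl(H'(\alpha_1)+H'(\alpha_2)\bigr)$. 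Here I would invoke that $x,\xi\in[0,1]$, as in \eqref{ur1}, so $x\xi\ge0$ and $(x-\xi)^2\le(x+\xi)^2$; then both members of each pair are $\ge(x-\xi)^2$, whence $\alpha_1,\alpha_2>(x-\xi)^2$ (when $x\xi=0$ the prefactor $4x\xi$ vanishes and the points may be chosen arbitrarily above $(x-\xi)^2$).

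The computation is essentially obstacle-free — the only content is the choice of the two pairings, after which everything is a single application of the mean value theorem. The one point worth flagging is the diagonal $x=\xi$, where the argument $(y(x)-y(\xi))^2+(x-\xi)^2$ vanishes and $H$ (for instance $H(r)=\tfrac12\log r$) is singular: off the diagonal every argument is strictly positive and $H\in C^1$ there, so the identities hold as stated, while on the diagonal the formulas are to be read with $\eta_2$ (respectively $\alpha_2$) tending to $0^+$, consistently with the blow-up of $K_1$ and $K_2$ there; as these kernels appear only under the integral sign in \eqref{ur1}, this causes no trouble.
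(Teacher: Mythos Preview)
Your proof is correct and follows essentially the same approach as the paper: the paper's proof simply displays the two regroupings of $K_1$ and $K_2$ (identical to yours) and says ``apply the mean value theorem,'' without spelling out the inequalities on $\eta_j,\alpha_j$ or the diagonal remark that you have added.
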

\begin{proof}
Apply the mean value theorem to the first and second terms in the
expression. This gives
\[
K_1=\Bigl( H((y(x)+y(\xi))^2+(x+\xi)^2)-H((y(x)-y(\xi))^2+(x+\xi)^2)
\Bigr)
\]
\[
+\Bigl(    H((y(x)+y(\xi))^2+(x-\xi)^2)-H((y(x)-y(\xi))^2+(x-\xi)^2)
\Bigr)
\]
and

\[
K_2=\Bigl( H((y(x)+y(\xi))^2+(x+\xi)^2)-H((y(x)+y(\xi))^2+(x-\xi)^2)
\Bigr)
\]
\[
+\Bigl(    H((y(x)-y(\xi))^2+(x+\xi)^2)-H((y(x)-y(\xi))^2+(x-\xi)^2)
\Bigr)
\]
\end{proof}

If $H=\log x$, we have the following representation
\[
K_1(x,\xi,y)=\log\left(
\frac{(x+\xi)^2+(y(x)+y(\xi))^2}{(x-\xi)^2+(y(x)-y(\xi))^2}\cdot
\frac{(x-\xi)^2+(y(x)+y(\xi))^2}{(x+\xi)^2+(y(x)-y(\xi))^2} \right)
\]
Then, assuming that $y(x)\geq 0$,
\[
 \frac{(x-\xi)^2+(y(x)+y(\xi))^2}{(x-\xi)^2+(y(x)-y(\xi))^2}
=1+\frac{4y(x)y(\xi)}{(x-\xi)^2+(y(x)-y(\xi))^2}\geq 1
\]
and
\[
 \frac{(x+\xi)^2+(y(x)+y(\xi))^2}{(x+\xi)^2+(y(x)-y(\xi))^2}
=1+\frac{4y(x)y(\xi)}{(x+\xi)^2+(y(x)-y(\xi))^2}\geq 1
\]
Therefore,  we have
\[
\log\left(1+\frac{4y(x)y(\xi)}{(x-\xi)^2+(y(x)-y(\xi))^2}\right)\leq
K_1\lesssim \log\left(1+\frac{4y(x)y(\xi)}{(x-\xi)^2}\right)
\]
provided that $y\geq 0$. Similarly, for $K_2$,
\[
\log\left(1+\frac{4x\xi}{(x-\xi)^2+(y(x)-y(\xi))^2}\right)\leq
K_2=\log\left(1+\frac{4x\xi}{(x-\xi)^2+(y(x)+y(\xi))^2}\right)
\]
\[
+\log\left(1+\frac{4x\xi}{(x-\xi)^2+(y(x)-y(\xi))^2}\right)
\]
\[
\lesssim \log\left( 1+\frac{4x\xi}{(x-\xi)^2}\right)
\]
and this holds for all $y$.\smallskip

The following lemma is trivial.
\begin{lemma}
Let $0\leq a\leq b\leq C$. Then, $b\sim a+b$ and
\[
\frac{1}{b-a}\int_a^b \frac{d\eta}{\eta}\sim b^{-1}\sim (a+b)^{-1},
\quad a>b/2
\]
and
\[
\frac{1}{a+b}\lesssim \frac{1}{b-a}\int_a^b
\frac{d\eta}{\eta}\lesssim  \frac{\log^+ a}{a+b}, \quad a<b/2
\]
\end{lemma}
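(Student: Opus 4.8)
This is a purely elementary computation, so the plan is simply to evaluate $\frac{1}{b-a}\int_a^b \frac{d\eta}{\eta} = \frac{\log(b/a)}{b-a}$ and estimate it in the two regimes. First I would dispose of the trivial observation: from $0\le a\le b\le C$ we have $b\le a+b\le 2b$, hence $a+b\sim b$ with constants depending only (harmlessly) on nothing, which is the first assertion.

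\textbf{Regime $a>b/2$.} Here $a\le\eta\le b\le 2a$, so $1/b\le 1/\eta\le 1/a\le 2/b$ on the whole interval of integration. Averaging over $[a,b]$ gives $\frac1b\le \frac{1}{b-a}\int_a^b\frac{d\eta}{\eta}\le \frac2b$, i.e. $\frac{1}{b-a}\int_a^b\frac{d\eta}{\eta}\sim b^{-1}$; combined with $a+b\sim b$ this also gives $\sim(a+b)^{-1}$. (One must note the degenerate case $a=b$ is read as the limiting value $1/b$, which is consistent.)

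\textbf{Regime $a<b/2$.} For the lower bound, since $a+b\sim b$ it suffices to show $\frac{\log(b/a)}{b-a}\gtrsim 1/b$; but $b-a<b$ and $\log(b/a)>\log 2>0$, so $\frac{\log(b/a)}{b-a}>\frac{\log 2}{b}\gtrsim b^{-1}\sim (a+b)^{-1}$. For the upper bound, again using $b-a\sim b$ (since $a<b/2$ forces $b/2<b-a\le b$), it is enough to bound $\frac{\log(b/a)}{b}$. Write $\log(b/a)=\log b+\log(1/a)\le \log(1/a)+\log C$. If $a<1$ then $\log(1/a)=|\log a|\le \log^+ a$ and $\log C$ is an absolute constant, so $\log(b/a)\lesssim \log^+ a$ and hence $\frac{1}{b-a}\int_a^b\frac{d\eta}{\eta}\lesssim \frac{\log^+a}{b}\sim \frac{\log^+a}{a+b}$. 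If instead $1\le a<b/2\le C/2$, then $\log(b/a)\le\log b\le\log C$ is bounded, while $\log^+a\ge 1$, so the claimed bound holds a fortiori.

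\textbf{Main obstacle.} There is essentially no obstacle; the only care needed is bookkeeping of the absolute constants (the dependence on the fixed upper bound $C$ enters only through the harmless additive term $\log C$ in the $a<b/2$ upper bound) and the harmless reading of the $a=b$ endpoint case. Throughout, all implied constants are absolute, consistent with the $\lesssim,\sim$ conventions fixed in Section 2.
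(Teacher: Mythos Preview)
Your proof is correct and is precisely the elementary computation one expects; the paper itself gives no proof, simply declaring the lemma ``trivial.'' One minor remark: your closing sentence says all implied constants are absolute, but as you yourself noted a few lines earlier, the upper-bound constant in the $a<b/2$ regime depends on $C$ through the term $\log C$; this is harmless but worth stating consistently.
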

\bigskip
Suppose that $y\in [0,C]$. Then, applying this lemma to $K_1$ with
\[
a=(y(x)-y(\xi))^2+(x+\xi)^2, \quad b=(y(x)+y(\xi))^2+(x+\xi)^2
\]
and then with
\[
a=(y(x)-y(\xi))^2+(x-\xi)^2, \quad b=(y(x)+y(\xi))^2+(x-\xi)^2
\]
gives
\begin{equation}\label{vza}
\frac{y(x)y(\xi)}{y^2(x)+y^2(\xi)+(x-\xi)^2}\lesssim K_1\lesssim
y(x)y(\xi)\frac{\log^+((x-\xi)^2+(y(x)-y(\xi))^2)}{y^2(x)+y^2(\xi)+(x-\xi)^2}
\end{equation}
For $K_2$, the same reasoning yields
\[
\frac{x\xi}{x^2+\xi^2+(y(x)-y(\xi))^2}\lesssim K_2\lesssim x\xi
\frac{\log^+((x-\xi)^2+(y(x)-y(\xi))^2)}{y^2(x)+y^2(\xi)+(x-\xi)^2}
\]\vspace{0.5cm}

\section{The implicit function theorem, the 2d Euler case.}
In this section, we will apply the scheme of the implicit function
theorem to the 2d Euler with cut-off which corresponds to $H(x)=\log
x$. However, we first notice that the problem allows the following
scaling.
\begin{lemma}
If $y(x)$ solves
\[
\int_{-1}^1 y'(x)\log
\left(\frac{(x+\xi)^2+(y(x)+y(\xi))^2}{(x-\xi)^2+(y(x)-y(\xi))^2}\right)d\xi=\int_{-1}^1
y'(\xi)\log \left(
\frac{(x+\xi)^2+(y(x)+y(\xi))^2}{(x-\xi)^2+(y(x)-y(\xi))^2}\right)d\xi
\]then $y_\alpha(x)=\alpha y(x/\alpha)$ solves
\begin{eqnarray*}
\int_{-\alpha}^\alpha y_\alpha'(x)\log
\left(\frac{(x+\xi)^2+(y_\alpha(x)+y_\alpha(\xi))^2}{(x-\xi)^2+(y_\alpha(x)-y_\alpha(\xi))^2}
\right)d\xi \hspace{2cm}  \\\hspace{2cm} =\int_{-\alpha}^\alpha
y'_\alpha(\xi)\log \left(
\frac{(x+\xi)^2+(y_\alpha(x)+y_\alpha(\xi))^2}{(x-\xi)^2+(y_\alpha(x)-y_\alpha(\xi))^2}\right)d\xi
\end{eqnarray*}
for every $\alpha>0$.
\end{lemma}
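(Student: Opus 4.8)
The plan is to verify the claim by a direct rescaling argument; the essential point is that the integrand is scale invariant. First I would record that differentiating $y_\alpha(x)=\alpha y(x/\alpha)$ gives $y_\alpha'(x)=y'(x/\alpha)$, with no leftover factor of $\alpha$. Then, in the left-hand integral
\[
I_\alpha(x):=\int_{-\alpha}^{\alpha} y_\alpha'(x)\log\left(\frac{(x+\xi)^2+(y_\alpha(x)+y_\alpha(\xi))^2}{(x-\xi)^2+(y_\alpha(x)-y_\alpha(\xi))^2}\right)d\xi,
\]
I would substitute $\xi=\alpha\eta$, so that $d\xi=\alpha\,d\eta$ and $\eta$ runs over $[-1,1]$, and write $x=\alpha s$ with $s\in[-1,1]$. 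Then $y_\alpha(x)=\alpha y(s)$ and $y_\alpha(\xi)=\alpha y(\eta)$, so every term inside the logarithm carries a common factor $\alpha^2$ in both numerator and denominator, which cancels. Hence
\[
I_\alpha(\alpha s)=\alpha\int_{-1}^{1} y'(s)\log\left(\frac{(s+\eta)^2+(y(s)+y(\eta))^2}{(s-\eta)^2+(y(s)-y(\eta))^2}\right)d\eta.
\]

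The same substitution applied to the right-hand side of the desired identity produces $\alpha$ times the right-hand side of the hypothesis, with $x$ replaced by $\alpha s$. Since $y$ solves the equation in the hypothesis for every point of $[-1,1]$, in particular at $s$, the two rescaled integrals coincide; dividing through by $\alpha>0$ yields the claimed identity at every $x=\alpha s\in[-\alpha,\alpha]$. One should also note in passing that $y_\alpha$ inherits $C^1$ regularity from $y$, so the formula makes sense.

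I expect no genuine obstacle here: this is a routine homogeneity computation. The only points requiring a moment's care are (i) that the argument of the logarithm is a ratio of two quantities homogeneous of degree two, so the scaling factor disappears inside the $\log$ rather than contributing an additive constant — this is exactly why $H(x)=\log x$ (or, more generally, a homogeneous $d$) permits the normalization $\delta=1$ used throughout; and (ii) that the limits of integration transform correctly to $[-1,1]$ under $\xi=\alpha\eta$. Both are immediate, so the proof is short.
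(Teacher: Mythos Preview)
Your proof is correct and is precisely the ``immediate calculation'' the paper alludes to: the substitution $\xi=\alpha\eta$, $x=\alpha s$ together with the homogeneity of the ratio inside the logarithm reduces the scaled identity to $\alpha$ times the original one. There is nothing to add.
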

\begin{proof}
The proof is an immediate calculation.
\end{proof}

Consider $y_\lambda(x)$ and take
\[
\widehat y(\widehat x,\lambda)=\lambda^{-1}y(\widehat
x\lambda,\lambda), \quad |\widehat x|<\lambda^{-1}
\]
We will perform this scaling many times in the paper. It allows to
reduce the problem to the one on the larger interval $|\widehat
x|<\lambda^{-1}$ with the normalization $\widehat y(0,\lambda)=1$.

\bigskip

{\bf Remark 2.} The perturbative analysis done below will be carried out around the
hyperbola $\widehat y(\widehat x)=\sqrt{\widehat x^2+1}$, not
$|\widehat x|$. The explanation to that is the following. The model case suggests that  $\{\widehat y(\widehat
x,\lambda)\}$ might have  some limiting behavior as $\lambda\to 0$.
If so, can one guess the asymptotical curve? To this end, let us make very natural assumptions that
\[
\widehat y(\widehat x,\lambda)\to f(\widehat x), \quad \widehat
y'(\widehat x,\lambda)\to f'(\widehat x)
\]
 on every interval $\widehat x\in [-C,C]$
and that
\[
\widehat y(\widehat x,\lambda)=\widehat x(1+o(1)),\quad \widehat
y'(\widehat x,\lambda)=1+o(1), \quad |\widehat x|\gg 1
\]
uniformly in $\lambda\in (0,\lambda_0]$. For $|\widehat x|<C$,
\[
(f'(\widehat x)+o(1))\int_{0}^{1/\lambda}
\left[\log\left(1+\frac{4\widehat y(\widehat x,\lambda)\widehat
y(\widehat \xi,\lambda)}{(\widehat x-\widehat \xi)^2+(\widehat
y(\widehat x,\lambda)-\widehat y(\widehat
\xi,\lambda))^2}\right)\right.
\]
\[
\left. + \log\left(1+\frac{4\widehat y(\widehat x,\lambda)\widehat
y(\widehat \xi,\lambda)}{(\widehat x+\widehat \xi)^2+(\widehat
y(\widehat x,\lambda)-\widehat y(\widehat
\xi,\lambda))^2}\right)\right] d\widehat \xi
\]
\[
=\int_{0}^{1/\lambda}(1+o(1)) \left[\log\left(1+\frac{4\widehat
x\widehat \xi}{(\widehat x-\widehat \xi)^2+(\widehat y(\widehat
x,\lambda)-\widehat y(\widehat \xi,\lambda))^2}\right) \right.
\]
\[
\left.
 + \log\left(1+\frac{4\widehat x
\widehat \xi}{(\widehat x+\widehat \xi)^2+(\widehat y(\widehat
x,\lambda)-\widehat y(\widehat \xi,\lambda))^2}\right)\right]
d\widehat \xi
\]
For the l.h.s., the asymptotics of the integrand as $\widehat
\xi\to\infty$ is
\[
\frac{4\widehat y(\widehat x,\lambda)}{\widehat \xi}+o(\widehat
\xi^{-1})
\]
and for the r.h.s., it is
\[
\frac{4\widehat x}{\widehat \xi}+o(\widehat \xi^{-1})
\]
Here we work under assumption that $|\widehat x|<C$. Taking $\lambda\to 0$, we get
\[
(f'f-\widehat x)\log \Bigl(1/\lambda\Bigr)+o\Bigl(\log \Bigl(1/\lambda\Bigr)\Bigr)=0
\]
This leads to $f'f-\widehat x=0$ and (since $f(0)=1$)
\begin{equation}\label{predict}
f(\widehat x)=(\widehat x^2+1)^{1/2}
\end{equation}
This formula was obtained under strong assumptions so does not imply the
self-similarity per se. However, one can take
\[
\widetilde y(x,\lambda)=(x^2+\lambda^2)^{1/2}
\]
as an approximate solution. Plugging it into the equation, one can
represent the resulting correction as the strain. Similarly to
\cite{den1}, one can show that this strain satisfies the uniform
bound
\[
\sup_{|z|<1,\lambda\in (0,1)}\frac{|S(z,\lambda)|}{|z|}<C
\]
The novelty of the current paper is that we  construct the {\it exact}
solution and thus make \mbox{$S(z,\lambda)=0$}. It will also be
proved that the exact solutions converge to hyperbola in the scaling
limit but only locally, over $x\in I_\lambda$, where $|I_\lambda|\to
0, \lambda\to 0$.\bigskip

In the lemma below, we show that all possible solutions
$y(x,\lambda)$ have the following common feature.

\begin{lemma}
If $y(x)$ solves (\ref{ur1}), then there is $x^*\in (0,1)$ at which
$y(x^*)=x^*$. That is, the graph of $y(x)$ intersects the line
$y=x$.
\end{lemma}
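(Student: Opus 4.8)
The plan is to argue by contradiction. Put $g(x)=y(x)-x$ on $[0,1]$; since $y$ is continuous and $g(0)=y(0)=\lambda>0$, either $g$ vanishes at some point of the open interval $(0,1)$ — which is exactly the assertion — or else, by the intermediate value theorem, $g$ keeps a constant sign on $(0,1)$, hence $g>0$ on $[0,1)$ and $g(1)\ge 0$, i.e. $y(x)\ge x$ on $[0,1]$ with strict inequality on $[0,1)$. I will show the second alternative cannot occur.

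The heart of the matter is a pointwise comparison of the two kernels for the $2$d Euler case $H=\log$. Cancelling the two common terms in the formulas for $K_1$ and $K_2$ gives
\[
K_1(x,\xi)-K_2(x,\xi)=2\Bigl(H\bigl((y(x)+y(\xi))^2+(x-\xi)^2\bigr)-H\bigl((y(x)-y(\xi))^2+(x+\xi)^2\bigr)\Bigr),
\]
and the difference of the two arguments equals $4\bigl(y(x)y(\xi)-x\xi\bigr)$, so, $H$ being strictly increasing, $K_1(x,\xi)>K_2(x,\xi)$ whenever $y(x)y(\xi)>x\xi$. Regrouping the four terms of $K_2$ into two differences of the form $H(\,\cdot+(x+\xi)^2)-H(\,\cdot+(x-\xi)^2)\ge 0$ shows $K_2\ge 0$ on $[0,1]^2$, while $K_1>0$ on $[0,1]^2$ by Lemma \ref{lem1} (for $H=\log$, $H'>0$, and $y>0$). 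Under the standing assumption $y(t)\ge t$ on $[0,1]$ one checks, separately for $x=0$, $x\in(0,1)$ and $x=1$, that $y(x)y(\xi)>x\xi$ for every $x\in[0,1]$ and every $\xi\in(0,1)$; hence for each fixed $x$ the inequality $K_1(x,\xi)>K_2(x,\xi)$ holds for a.e. $\xi$. Since both kernels carry only an integrable logarithmic singularity along the diagonal, the functions $a(x)=\int_0^1 K_1(x,\xi)\,d\xi$ and $b(x)=\int_0^1 K_2(x,\xi)\,d\xi$ are finite, continuous and positive, and $b(x)<a(x)$ for all $x\in[0,1]$; therefore $\rho:=\max_{x\in[0,1]} b(x)/a(x)<1$.

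Now I exploit the fixed‑point structure of \eqref{ur1}, which reads $y'(x)=a(x)^{-1}\int_0^1 y'(\xi)K_2(x,\xi)\,d\xi$. As $y$ is even and $C^1$, $y'(0)=0$, so with $M^+=\max_{[0,1]}y'$ and $M^-=\min_{[0,1]}y'$ we have $M^-\le 0\le M^+$. Using $0\le K_2$ and $M^-\le y'(\xi)\le M^+$ we get
\[
M^-\,\frac{b(x)}{a(x)}\ \le\ y'(x)\ \le\ M^+\,\frac{b(x)}{a(x)},\qquad x\in[0,1],
\]
and since $0\le b(x)/a(x)\le\rho<1$ this yields $y'(x)\le M^+\rho$ and $y'(x)\ge M^-\rho$ for all $x$; taking the maximum and the minimum over $x$ gives $M^+(1-\rho)\le 0$ and $M^-(1-\rho)\ge 0$, hence $M^+=M^-=0$. (Equivalently: the operator $f\mapsto a(\cdot)^{-1}\int_0^1 K_2(\cdot,\xi)f(\xi)\,d\xi$ on $C[0,1]$ has norm $\rho<1$, so $0$ is its only fixed point.) Thus $y'\equiv 0$, i.e. $y\equiv\lambda$; but $\lambda\in(0,\lambda_0)\subset(0,1)$, so $y(\lambda)=\lambda$, contradicting $y(x)>x$ on $[0,1)$. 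This rules out the second alternative and completes the argument.

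The substantive step is the off‑diagonal kernel comparison $K_1>K_2$, which converts the hypothesis ``the graph stays above the line $y=x$'' into the strict contraction bound $\rho<1$; the rest is bookkeeping. The only points needing a little care are the reduction to the case $y\ge x$ on all of $[0,1]$ (in particular at the endpoint $x=1$) and verifying that the strict inequality $b(x)<a(x)$ survives integration across the logarithmic singularity of $K_1,K_2$ on the diagonal — neither of which is a genuine obstacle.
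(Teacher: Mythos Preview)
Your proof is correct and follows essentially the same route as the paper: assume by contradiction that $y(x)>x$ on $(0,1)$, deduce the pointwise inequality $K_1>K_2\ge 0$, and then use a maximum-principle argument on $y'$ to force $y'\equiv 0$. The paper does the last step by evaluating at the point where $y'$ attains its maximum, while you phrase it as a strict contraction in $C[0,1]$; these are equivalent, and your version is in fact a bit more careful about the endpoint $x=1$ and the degenerate case $y'\equiv 0$.
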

\begin{proof}
Suppose instead that $y(x)>x$ for all $x\in (0,1)$. Then,
\[
\frac{4y(x)y(\xi)}{(x-\xi)^2+(y(x)-y(\xi))^2}>\frac{4x\xi}{(x-\xi)^2+(y(x)-y(\xi))^2}
\]
and
\[
\frac{4y(x)y(\xi)}{(x+\xi)^2+(y(x)-y(\xi))^2}>\frac{4x\xi}{(x-\xi)^2+(y(x)+y(\xi))^2}
\]
Therefore, $K_1(x,\xi)>K_2(x,\xi)>0$. Now, assume that
\[
\max_{x\in [0,1]} y'(x)=y'(x_1)
\]
Then,
\[
\int_0^1 y'(\xi)K_1(x_1,\xi)d\xi\leq y'(x_1)\int_0^1
K_1(x_1,\xi)d\xi=\int_0^1 y'(\xi)K_2(x_1,\xi)d\xi
\]
and this inequality  is strict unless $y'(x)={\rm const}$. This is impossible by, e.g., the smoothness assumption.
\end{proof}\bigskip

Now that we established what properties the solution $y(x,\lambda)$
needs to possess, we are ready to prove its existence.\smallskip

Consider small $\delta>0$ and the sets
\[
\Omega=\{f: \, \|f(x)-x\|_{\dot Lip[0,1]}\leq \delta\},  \quad
I=\{\lambda: \lambda\in (0,\lambda_0], \lambda_0\ll 1\}
\]
We will look for $y=\sqrt{\lambda^2+f^2(x)}$, where $(f,\lambda)\in
\Omega\times I$. Notice that $f(x)=\int_0^x f'(t)dt$ and
$\|f'-1\|_{L^\infty[0,1]}\ll 1$. Therefore,
\[
f(x)=x(1+O(\delta))
\]
In particular, $f(x)>0$ for $x>0$.\bigskip

Consider the functional (we specify the dependence of $K_{1(2)}$ on $y$
here)
\[
F(f,\lambda)= \frac{\displaystyle ff'\int_0^1
K_1(x,\tau,y)d\tau-\sqrt{\lambda^2+f^2(x)} \int_0^1
y'(\tau)K_2(x,\tau,y)d\tau}{x\sqrt{x^2+\lambda^2}\log^+(x^2+\lambda^2)}
\]
\[
{\rm where}\quad \quad y=\sqrt{\lambda^2+f^2(x)}
\]
which acts from $\Omega\times I$ to $L^\infty[0,1]$. Moreover,
$F(x,0)=0$.\smallskip

\bigskip

The equation \eqref{ur1} can be rewritten as
\[
F(f,\lambda)=0
\]
We will solve it in the following way (this is essentially the implicit function
theorem proof \cite{kh} but we prefer to give the argument for the sake of completeness). Write
\[
F(f,\lambda)=F(x,\lambda)+\Bigl( D_fF(x,\lambda)\Bigr) \psi+Q(\psi)
\]
where $\psi=f-x$ and this representation defines an operator $Q$.
That can be rewritten as
\begin{equation}\label{operat}
\psi=-\Bigl(D_fF(x,\lambda)\Bigr)^{-1}Q(\psi)+\psi_0(\lambda), \quad
\psi_0=-\Bigl(D_f F(x,\lambda)\Bigr)^{-1}F(x,\lambda)
\end{equation}
Next, we will show that this equation can be solved by contraction
mapping principle in $\cal{B}_\delta=\{\|\psi\|_{\dot Lip[0,1]}\leq
\delta\}, \,\delta\ll 1$. To this end, we only need to
prove:\bigskip
\begin{enumerate}

\item[(a)] Linear part:
\begin{equation}\label{aaa1}
\|\bigl(D_fF(x,\lambda)\bigr)^{-1}\|_{L^\infty[0,1], \dot Lip[0,1]}<\widehat{C}
\end{equation}
 if
$\lambda\in (0,\lambda_1)$ with $\lambda_1\ll 1$.\bigskip

\item[(b)] Frechet differentiability:
\begin{equation}\label{aaa2}
\|Q(\psi)\|_{ L^\infty[0,1]}=o(1)\|\psi\|_{\dot Lip[0,1]}
\end{equation}
 and
\begin{equation}\label{aaa3}
\|Q(\psi_2)-Q(\psi_1)\|_{ L^\infty[0,1]}=o(1)\|\psi_2-\psi_1\|_{\dot
Lip[0,1]}
\end{equation}
with $o(1)\to 0$ as $\delta\to 0$ uniformly in $\lambda\in (0,1)$ and
 $\psi, \psi_{1(2)}\in \cal{B}_\delta$.\bigskip

\item[(c)] Small initial data:
\begin{equation}\label{aaa4}
\|\psi_0(\lambda)\|_{\dot Lip[0,1]}<\delta/2
\end{equation}
where $\lambda\in [0,\lambda_0], \lambda_0<\lambda_1$.

\end{enumerate}\bigskip

We will first make $\lambda_1$ so small that (a) holds.
Then, we choose $\delta$ small enough to have $o(1)$ in (b) at most $(10\widehat{C})^{-1}$ uniformly in $\lambda\in (0,1)$. Finally, we take
 $\lambda_0$ so small that (c) holds. This
will ensure  existence and  uniqueness of solution in the
 complete metric space $\cal{B}_\delta$. Then, it will be easy to bootstrap its regularity from $Lip[-1,1]$ to $C^1[-1,1]$. The continuous
dependence on $\lambda$ and
\[
\|y(x,\lambda)- x\|_{C[0,1]}\to 0, \quad \lambda\to 0
\]
will follow from the proof.

\bigskip

\section{ The analysis of  Gateaux derivative for $H(x)=\log
x$.}\bigskip

Taking $f_t=f+tu, u\in \dot Lip[0,1]$, plugging it into $F$, and
computing the derivative in $t$ at $t=0$ with positive $x$ fixed,
results in
\begin{equation}\label{proizvol}
(D_fF(f,\lambda))u=\frac{1}{x\sqrt{x^2+\lambda^2}\log^+(x^2+\lambda^2)}\left(
I_1+\ldots +I_6\right)
\end{equation}
We have
\[
I_1=\left(f'\int_0^1 K_1(x,\tau,y)d\tau\right)u, \quad
y=\sqrt{\lambda^2+f^2}
\]
\[
I_2=\left(  f\int_0^1 K_1(x,\tau,y)d\tau \right)u'
\]
\[
I_3=ff'\int_0^1 \delta K_1(x,\tau,y)d\tau
\]
where
\[
\delta K_1=\frac{2(y(x)+y(\xi))(\delta y(x)+\delta
y(\xi))}{(x+\xi)^2+(y(x)+y(\xi))^2} - \frac{2(y(x)-y(\xi))(\delta
y(x)-\delta y(\xi))}{(x-\xi)^2+(y(x)-y(\xi))^2}
\]
\[
+\frac{2(y(x)+y(\xi))(\delta  y(x)+\delta y
(\xi))}{(x-\xi)^2+(y(x)+y(\xi))^2} -\frac{2(y(x)-y(\xi))(\delta y
(x)-\delta y  (\xi))}{(x+\xi)^2+(y(x)-y(\xi))^2}
\]
and
\[
\delta y=\frac{fu}{\sqrt{\lambda^2+f^2}}
\]
\[
I_4=-\left(\frac{f}{\sqrt{\lambda^2+f^2}}\int_0^1
y'K_2(x,\tau,y)d\tau\right) u
\]
\[
I_5=-\sqrt{\lambda^2+f^2}\int_0^1 \delta y' K_2(x,\tau,y)d\tau
\]
where
\[
\delta
y'=\frac{f'}{\sqrt{\lambda^2+f^2}}u+\frac{f}{\sqrt{\lambda^2+f^2}}u'-\frac{f^2f'u}{(\lambda^2+f^2)^{3/2}}
\]
\[
I_6=-\sqrt{\lambda^2+f^2}\int_0^1 y'(\tau)\delta K_2(x,\tau,y)d\tau
\]
where
\[
\delta K_2=\frac{2(y(x)+y(\xi))(\delta y(x)+\delta
y(\xi))}{(x+\xi)^2+(y(x)+y(\xi))^2} - \frac{2(y(x)-y(\xi))(\delta
y(x)-\delta y(\xi))}{(x-\xi)^2+(y(x)-y(\xi))^2}
\]
\[
-\frac{2(y(x)+y(\xi))(\delta y(x)+\delta
y(\xi))}{(x-\xi)^2+(y(x)+y(\xi))^2} +\frac{2(y(x)-y(\xi))(\delta
y(x)-\delta y(\xi))}{(x+\xi)^2+(y(x)-y(\xi))^2}
\]\bigskip

\subsection{ The derivative at $f(x)=x$}\bigskip

Define $L_\lambda=(D_fF)(x,\lambda)$. If $f=x$ in the previous
section, then
\[
L_\lambda
u=\frac{1}{x\sqrt{x^2+\lambda^2}\log^+(x^2+\lambda^2)}\left(
\widehat I_{1,\lambda}+\ldots +\widehat I_{6,\lambda}\right)
\]
We again have
\[
\widehat I_{1,\lambda}=\left(\int_0^1
K_1(x,\tau,y_\lambda)d\tau\right)u
\]
 with
\[
y_\lambda(x)=\sqrt{\lambda^2+x^2}
\]

\[
\widehat I_{2,\lambda}=x\left(  \int_0^1  K_1(x,\tau,y_\lambda)d\tau
\right)u'
\]
\[
\widehat I_{3,\lambda}=x\int_0^1 \delta K_1(x,\tau,y_\lambda)d\tau
\]
where
\[
\delta K_1=\frac{2(y_\lambda(x)+y_\lambda(\xi))(\delta
y_\lambda(x)+\delta
y_\lambda(\xi))}{(x+\xi)^2+(y_\lambda(x)+y_\lambda(\xi))^2} -
\frac{2(y_\lambda(x)-y_\lambda(\xi))(\delta y_\lambda(x)-\delta
y_\lambda(\xi))}{(x-\xi)^2+(y_\lambda(x)-y_\lambda(\xi))^2}
\]
\[
+\frac{2(y_\lambda(x)+y_\lambda(\xi))(\delta  y_\lambda(x)+\delta
y_\lambda (\xi))}{(x-\xi)^2+(y_\lambda(x)+y_\lambda(\xi))^2}
-\frac{2(y_\lambda(x)-y_\lambda(\xi))(\delta y_\lambda (x)-\delta y_\lambda
(\xi))}{(x+\xi)^2+(y_\lambda(x)-y_\lambda(\xi))^2}
\]
and
\[
\delta y_\lambda=\frac{x}{\sqrt{\lambda^2+x^2}}u
\]
\[
\widehat I_{4,\lambda}=-\left(\frac{x}{\sqrt{\lambda^2+x^2}}\int_0^1
y_\lambda'K_2(x,\tau,y_\lambda)d\tau\right) u
\]
\[
\widehat I_{5,\lambda}=-\sqrt{\lambda^2+x^2}\int_0^1 \delta
y_\lambda' K_2(x,\tau,y_\lambda)d\tau
\]
where
\[
\delta
y'_\lambda=\frac{1}{\sqrt{\lambda^2+x^2}}u+\frac{x}{\sqrt{\lambda^2+x^2}}u'
-\frac{x^2}{(\lambda^2+x^2)^{3/2}}u
\]
\[
\widehat I_{6,\lambda}=-\sqrt{\lambda^2+x^2}\int_0^1
y'_\lambda(\tau)\delta K_2(x,\tau,y_\lambda)d\tau
\]
and
\[
\delta K_2=\frac{2(y_\lambda(x)+y_\lambda(\xi))(\delta
y_\lambda(x)+\delta
y_\lambda(\xi))}{(x+\xi)^2+(y_\lambda(x)+y_\lambda(\xi))^2} -
\frac{2(y_\lambda(x)-y_\lambda(\xi))(\delta y_\lambda(x)-\delta
y_\lambda(\xi))}{(x-\xi)^2+(y_\lambda(x)-y_\lambda(\xi))^2}
\]
\[
-\frac{2(y_\lambda(x)+y_\lambda(\xi))(\delta y_\lambda(x)+\delta
y_\lambda(\xi))}{(x-\xi)^2+(y_\lambda(x)+y_\lambda(\xi))^2}
+\frac{2(y_\lambda(x)-y_\lambda(\xi))(\delta y_\lambda(x)-\delta
y_\lambda(\xi))}{(x+\xi)^2+(y_\lambda(x)-y_\lambda(\xi))^2}
\]

\bigskip

\subsection{ The operator $L_\lambda$}\bigskip

For $L_\lambda$, we have the following formula
\[
L_\lambda=A_1u'+A_2u+\int_0^1 D_1(x,\xi,\lambda)u(\xi)d\xi+\int_0^1
D_2(x,\xi,\lambda)u'(\xi)d\xi
\]
The equation
\[
L_\lambda u=g
\]
can be rewritten as
\begin{equation}\label{for1}
A_1(x,\lambda)u'+A_2(x,\lambda) u+\int_0^1
M(x,\xi,\lambda)u'(\xi)d\xi=g
\end{equation}
if one assumes $u(0)=0$ and
\[
M(x,\xi,\lambda)=D_2(x,\xi,\lambda)+\int_{\xi}^1
D_1(x,\tau,\lambda)d\tau
\]
In the calculation above, we used
\[
\lim_{x\to 0}\left(u(x)\int_0^1 D_1(x,\tau,\lambda)d\tau\right)=0
\]
This equality follows from the estimate $|u(x)|\lesssim x$ and from the
analysis of \[\int_0^1D_1(x,\tau,\lambda)d\tau\] when $x\to 0$ (see
\eqref{de1} below).

Let us introduce the integral operator $\cal M_\lambda$ with the
kernel $M(x,\tau,\lambda)$, e.g.,
\[
{\cal M}_\lambda f=\int_0^1 M(x,\tau,\lambda)f(\tau)d\tau
\]
For the coefficients, we have
\[
A_1=\frac{\displaystyle \int_0^1
K_1(x,\tau,y_\lambda)d\tau}{\sqrt{x^2+\lambda^2}\log^+(x^2+\lambda^2)}
\]
The expression for $A_2$ is more complicated,
\begin{eqnarray}\label{a2}
A_2=\frac{1}{x\sqrt{x^2+\lambda^2}\log^+(x^2+\lambda^2)}\left(
\int_0^1 K_1(x,\tau, y_\lambda)d\tau- \right. \hspace{3cm}
\\\nonumber
\left. \frac{x}{\sqrt{\lambda^2+x^2}}\int_0^1
y'_\lambda(\tau)K_2(x,\tau,y_\lambda ) d\tau+B_2 \right)
\end{eqnarray}
where
\[
B_2=\frac{2x}{\sqrt{x^2+\lambda^2}}\int_0^1 \left(x-
\frac{\xi\sqrt{\lambda^2+x^2}}{\sqrt{\lambda^2+\xi^2}}\right) \left(
\frac{y_\lambda(x)+y_\lambda(\xi)}{(x+\xi)^2+(y_\lambda(x)+y_\lambda(\xi))^2}
\right.\]

\[\left.
-\frac{y_\lambda(x)-y_\lambda(\xi)}{(x-\xi)^2+(y_\lambda(x)-y_\lambda(\xi))^2}
\right)d\xi
\]
\[
+\frac{2x}{\sqrt{x^2+\lambda^2}}\int_0^1 \left(x+
\frac{\xi\sqrt{\lambda^2+x^2}}{\sqrt{\lambda^2+\xi^2}}\right) \left(
\frac{y_\lambda(x)+y_\lambda(\xi)}{(x-\xi)^2+(y_\lambda(x)+y_\lambda(\xi))^2}
\right.
\]
\[
\left.
-\frac{y_\lambda(x)-y_\lambda(\xi)}{(x+\xi)^2+(y_\lambda(x)-y_\lambda(\xi))^2}
\right)d\xi
\]

For $D_{1(2)}$, one has

\[
D_2(x,\xi,\lambda)=-\frac{1}{x\log^+(x^2+\lambda^2)}K_2(x,\xi,y_\lambda)\frac{\xi}{\sqrt{\lambda^2+\xi^2}}
\]
and
\[
D_1(x,\xi,\lambda)=\frac{1}{x\sqrt{\lambda^2+x^2}\log^+(x^2+\lambda^2)}\left[
\frac{2x\xi}{\sqrt{\lambda^2+\xi^2}}\left(
\frac{y_\lambda(x)+y_\lambda(\xi)}{(x+\xi)^2+(y_\lambda(x)+y_\lambda(\xi))^2}
\right. \right.
\]
\[
\left.+\frac{y_\lambda(x)-y_\lambda(\xi)}{(x-\xi)^2+(y_\lambda(x)-y_\lambda(\xi))^2}+\frac{y_\lambda(x)+y_\lambda(\xi)}{(x-\xi)^2+(y_\lambda(x)+y_\lambda(\xi))^2}+\frac{y_\lambda(x)-y_\lambda(\xi)}{(x+\xi)^2+(y_\lambda(x)-y_\lambda(\xi))^2}\right)
\]
\[
-\frac{2\xi^2\sqrt{\lambda^2+x^2}}{\xi^2+\lambda^2} \left(
\frac{y_\lambda(x)+y_\lambda(\xi)}{(x+\xi)^2+(y_\lambda(x)+y_\lambda(\xi))^2}
\right.
\]
\[
\left.+\frac{y_\lambda(x)-y_\lambda(\xi)}{(x-\xi)^2+(y_\lambda(x)-y_\lambda(\xi))^2}-\frac{y_\lambda(x)+y_\lambda(\xi)}{(x-\xi)^2+(y_\lambda(x)+y_\lambda(\xi))^2}-\frac{y_\lambda(x)-y_\lambda(\xi)}{(x+\xi)^2+(y_\lambda(x)-y_\lambda(\xi))^2}\right)
\]
\[
\left.-\frac{\lambda^2\sqrt{\lambda^2+x^2}}{(\lambda^2+\xi^2)^{3/2}}K_2(x,\xi,y_\lambda)\right]
\]

In this section, we will obtain estimates/asymptotics of all four
terms in the case when $\lambda\to 0$. It will be trivial to do that
away from $0$: e.g., for every $\delta>0$ both $A_{1(2)}(\lambda)\to
A_{1(2)}(0)$ uniformly over $x\in [\delta,1]$. The behavior around
$0$ is delicate and will require more careful treatment.\bigskip

We start with the following calculation that will simplify the
expressions above.

We write
\begin{equation}\label{miracle1}
\sqrt{\widehat x^2+1}-\sqrt{\widehat \xi^2+1}=(\widehat x-\widehat
\xi)r_1(x,\xi)
\end{equation}
where
\begin{equation}\label{miren}
r_1=\frac{\widehat x+\widehat \xi}{\sqrt{\widehat
x^2+1}+\sqrt{\widehat \xi^2+1}}=1+O\Bigl(\frac{1}{\widehat x\widehat
\xi}\Bigr), \quad {\rm if}\quad \widehat x,\widehat\xi\gg 1
\end{equation}
Similarly,
\begin{equation}\label{miracle2}
\sqrt{\widehat x^2+1}+\sqrt{\widehat \xi^2+1}=(\widehat x+\widehat
\xi)r^{-1}_1, \quad r_1^{-1}=\frac{\sqrt{\widehat x^2+1}+\sqrt{\widehat \xi^2+1}}{\widehat x+\widehat
\xi}
\end{equation}
Thus, we have for $K_2$
\begin{equation}\label{wow}
\frac{(\widehat x+\widehat \xi)^2+(\sqrt{\widehat
x^2+1}+\sqrt{\widehat \xi^2+1})^2}{(\widehat x-\widehat
\xi)^2+(\sqrt{\widehat x^2+1}-\sqrt{\widehat \xi^2+1})^2}\cdot
\frac{(\widehat x+\widehat \xi)^2+(\sqrt{\widehat
x^2+1}-\sqrt{\widehat \xi^2+1})^2}{(\widehat x-\widehat
\xi)^2+(\sqrt{\widehat x^2+1}+\sqrt{\widehat \xi^2+1})^2}
\end{equation}
\[
=\frac{(\widehat x+\widehat\xi)^2(1+r_1^{-2})}{(\widehat
x-\widehat\xi)^2(1+r_1^2)}\cdot \frac{(\widehat x+\widehat\xi)^2+(\widehat
x-\widehat\xi)^2r_1^2}{(\widehat x-\widehat\xi)^2+(\widehat
x+\widehat\xi)^2r_1^{-2}}=\frac{(\widehat
x+\widehat\xi)^2}{(\widehat x-\widehat\xi)^2}
\]
after the  cancelation.

Similarly, for $K_1$
\begin{equation}\label{wow1}
\frac{(\widehat x+\widehat \xi)^2+(\sqrt{\widehat
x^2+1}+\sqrt{\widehat \xi^2+1})^2}{(\widehat x-\widehat
\xi)^2+(\sqrt{\widehat x^2+1}-\sqrt{\widehat \xi^2+1})^2}\cdot
\frac{(\widehat x-\widehat \xi)^2+(\sqrt{\widehat
x^2+1}+\sqrt{\widehat \xi^2+1})^2}{(\widehat x+\widehat
\xi)^2+(\sqrt{\widehat x^2+1}-\sqrt{\widehat \xi^2+1})^2}
\end{equation}
\[=
\frac{(\widehat x+\widehat\xi)^2}{(\widehat
x-\widehat\xi)^2}r_1^{-4}
\]
Therefore, we have
\begin{equation}\label{miracle-1}
K_2(x,\tau,y_\lambda)=K_2(x,\tau,y_0)
\end{equation}
and
\begin{equation}\label{miracle-2}
K_1(x,\tau,y_\lambda)=K_1(x,\tau,y_0)-4\log r_1
\end{equation}
\bigskip

Now, we are ready for the analysis of the asymptotics for the
coefficients in $L_\lambda$.\bigskip

{\bf 1. The coefficient $A_1$.}\hspace{0.5cm}\bigskip

Consider $A_1(x,0)$ first. We have
\begin{equation}\label{a10}
A_1(x,0)=\frac{1}{x\log^+(x^2)}\int_0^1
\log\left(\frac{x+\xi}{x-\xi}\right)^2d\xi \hspace{4cm}
\end{equation}
\[
=\frac{1}{\log^+(x^2)}\int_0^{1/x}
\log\left(\frac{1+u}{1-u}\right)^2du=2+o(1), \quad x\to 0
\]
and it is smooth in $(0,1)$. At the point $x=0$, we define $A_1(0,0)=2$, i.e., by its right limit.

\begin{lemma}
We have
\begin{equation}\label{a1-1}
\lim_{\lambda\to 0}\|A_1(x,\lambda)-A_1(x,0)\|_{C[0,1]}=0
\end{equation}
\end{lemma}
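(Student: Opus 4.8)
The plan is to show that $A_1(x,\lambda)$ converges to $A_1(x,0)$ uniformly on $[0,1]$ by splitting the interval into a region away from the origin, where the integrand depends continuously on $\lambda$, and a boundary layer near the origin, where the scaling $\widehat x = x/\lambda$ must be used. Recall from \eqref{miracle-2} that $K_1(x,\tau,y_\lambda)=K_1(x,\tau,y_0)-4\log r_1$, where $r_1$ depends on the rescaled variables $\widehat x=x/\lambda,\widehat\tau=\tau/\lambda$. Hence
\[
A_1(x,\lambda)-A_1(x,0)=\frac{1}{x\log^+(x^2+\lambda^2)}\int_0^1\Big(K_1(x,\tau,y_0)-4\log r_1\Big)d\tau-\frac{1}{x\log^+(x^2)}\int_0^1 K_1(x,\tau,y_0)d\tau,
\]
and the difference naturally decomposes into a piece coming from the change $\log^+(x^2)\to\log^+(x^2+\lambda^2)$ in the normalization, and a piece coming from the extra term $-4\log r_1$.

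\emph{Region $x\geq\delta$ (for fixed small $\delta>0$).} Here $\log^+(x^2+\lambda^2)\to\log^+(x^2)$ uniformly, and on the rescaled variables $\widehat x\geq\delta/\lambda\to\infty$, so by \eqref{miren} $r_1=1+O(1/(\widehat x\widehat\xi))$, and $\int_0^1|\log r_1|\,d\tau = \lambda\int_0^{1/\lambda}|\log r_1|\,d\widehat\tau$ is $O(\lambda\log(1/\lambda))$ once one checks the integrability of $\log r_1$ near $\widehat\tau=0$ (there $r_1\sim\widehat\xi$, contributing a convergent $\int_0 |\log\widehat\xi|\,d\widehat\xi$). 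Dividing by $x\geq\delta$ and $\log^+(x^2)\gtrsim 1$, this whole correction is $o(1)$ as $\lambda\to0$, uniformly for $x\in[\delta,1]$. This gives the trivial half already promised in the text.

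\emph{Boundary layer $x<\delta$.} Rescale: with $\widehat x=x/\lambda$ one has
\[
\frac{1}{x}\int_0^1 K_1(x,\tau,y_\lambda)\,d\tau=\frac{1}{\widehat x}\int_0^{1/\lambda}K_1(\widehat x,\widehat\tau,\sqrt{\,\cdot\,^2+1})\,d\widehat\tau,
\]
and using $\log^+(x^2+\lambda^2)=\log^+(\lambda^2(\widehat x^2+1))$ one reduces $A_1(x,\lambda)$ to an explicit function of $\widehat x$ plus a tail coming from the finite upper limit $1/\lambda$. The main point is to show this tail and the discrepancy with $A_1(x,0)=\frac{1}{\log^+(x^2)}\int_0^{1/x}\log\big((1+u)/(1-u)\big)^2 du$ are both $o(1)$. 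The tail of the $\widehat\tau$-integral is controlled using the bounds on $K_1$ from \eqref{vza}: for $\widehat\tau$ large, $K_1\lesssim \widehat x\,\widehat\tau\cdot\widehat\tau^{-2}\log^+\widehat\tau^2\lesssim \widehat x\,\widehat\tau^{-1}\log^+\widehat\tau$, so the part of the integral over $\widehat\tau\in(T,1/\lambda)$ is $\lesssim \widehat x\log^2(1/\lambda)$, which after division by $\widehat x\log^+(\lambda^2(\widehat x^2+1))$ — here one must separate the subcases $\widehat x\lesssim 1$ and $\widehat x\gg 1$ because $\log^+(x^2+\lambda^2)\sim\log(1/\lambda^2)$ in the former and $\sim\log^+(x^2)$ in the latter — is seen to be $o(1)$ uniformly. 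Meanwhile, for $\widehat\tau$ in a bounded range the integrands for $y_\lambda$ and for the $x\to0$ limiting kernel agree up to $O(\lambda^2)$-errors after unscaling, so the bounded-range contributions match in the limit.

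\emph{Main obstacle.} The delicate step is the uniformity near $x=0$: the normalizing factor $\log^+(x^2+\lambda^2)$ behaves very differently depending on whether $x$ is $\ll\lambda$, comparable to $\lambda$, or $\gg\lambda$, and the numerator $\frac1x\int_0^1 K_1\,d\tau$ must be shown to track $A_1(x,0)$ with an error that, divided by the (possibly large) logarithm, still goes to zero \emph{uniformly in $x$}. The cleanest route is to prove the stronger statement that $A_1(x,\lambda)$ extends continuously to $\{x\in[0,1],\lambda\in[0,\lambda_0]\}$ with $A_1(0,\lambda)=2$ for all $\lambda$ (by the same computation as \eqref{a10}, using that $\int_0^{1/\lambda}(\,\cdot\,)$ over the rescaled variable, divided by $\log(1/\lambda^2)$, again tends to $2$), and then invoke compactness: a function continuous on the compact set $[0,1]\times[0,\lambda_0]$ is uniformly continuous, whence $\|A_1(\cdot,\lambda)-A_1(\cdot,0)\|_{C[0,1]}\to0$. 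Establishing that joint continuity — in particular the matching of the two limiting procedures $x\to0$ and $\lambda\to0$ at the corner $(0,0)$ — is where the real work lies, and it rests on the explicit cancellations \eqref{miracle-1}–\eqref{miracle-2} together with the kernel bounds \eqref{vza}.
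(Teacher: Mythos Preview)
Your treatment of the region $x\ge\delta$ is fine and matches the paper. The boundary-layer argument, however, has a genuine gap.

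In the regime $x<\delta$ you try to show that the tail $\int_T^{1/\lambda}K_1\,d\widehat\tau$, after division by the normalizer, is $o(1)$. It is not: that tail is the \emph{leading} contribution. From \eqref{wow1} one has, for large $\widehat\tau$, $K_1\sim 4\sqrt{1+\widehat x^2}\,\widehat\tau^{-1}$, so $\int_T^{1/\lambda}K_1\,d\widehat\tau\sim 4\sqrt{1+\widehat x^2}\log(1/\lambda)$; divided by $\sqrt{1+\widehat x^2}\,\log^+(\lambda^2(1+\widehat x^2))$ this tends to $2$, not to $0$. Your estimate $K_1\lesssim \widehat x\,\widehat\tau^{-1}\log^+\widehat\tau$ is also incorrect on two counts: for $\widehat x\le 1$ the relevant factor is $\sqrt{1+\widehat x^2}\sim 1$, not $\widehat x$ (the rescaled $y$ is $\sqrt{1+\widehat x^2}$, not $\widehat x$), and the $\log^+$ in \eqref{vza} is in the \emph{unscaled} variables, so it contributes a $\log(1/\lambda)$ rather than a $\log\widehat\tau$. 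With your bound as written, the quotient would be $\sim\log(1/\lambda)\to\infty$, which already signals the problem. There is a related slip in normalization: you divide by $x$ (equivalently $\widehat x$) in the boundary layer, but the correct denominator for $A_1$ is $\sqrt{x^2+\lambda^2}$, which for $x\lesssim\lambda$ is $\sim\lambda$, not $x$.

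What actually has to be shown is what you correctly diagnose in your last paragraph: both $A_1(x,\lambda)$ and $A_1(x,0)$ tend to $2$ as $(x,\lambda)\to(0,0)$, so their difference is uniformly small near the corner. But this is not an abstract compactness fact---it requires computing the leading asymptotics. The paper does exactly this: using \eqref{wow1} and the expansion of $r_1$, it proves
\[
\int_0^1 K_1(x,\tau,y_\lambda)\,d\tau=2\sqrt{x^2+\lambda^2}\,\log^+(x^2+\lambda^2)+O\bigl(\sqrt{x^2+\lambda^2}\bigr)
\]
uniformly on $[0,\delta]$ (splitting $\widehat x\le 1$ and $\widehat x>1$), whence $A_1(x,\lambda)=2+O\bigl(1/\log^+(x^2+\lambda^2)\bigr)$. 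Combined with \eqref{a10}, this gives $\|A_1(\cdot,\lambda)-A_1(\cdot,0)\|_{L^\infty[0,\delta]}\lesssim 1/\log^+(\delta^2)$, and then the trivial convergence on $[\delta,1]$ finishes. Your proposal skips precisely this leading-order computation; without it the argument does not close.
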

\begin{proof}

 If $x=\lambda \widehat
x$, then
\[
\int_0^1 K_1(x,\tau,y_\lambda)d\tau=
\]
\[
\lambda \int_0^{1/\lambda} \log\left( \frac{(\widehat x+\widehat
\xi)^2+(\sqrt{\widehat x^2+1}+\sqrt{\widehat \xi^2+1})^2}{(\widehat
x-\widehat \xi)^2+(\sqrt{\widehat x^2+1}-\sqrt{\widehat
\xi^2+1})^2}\cdot  \frac{(\widehat x-\widehat \xi)^2+(\sqrt{\widehat
x^2+1}+\sqrt{\widehat \xi^2+1})^2}{(\widehat x+\widehat
\xi)^2+(\sqrt{\widehat x^2+1}-\sqrt{\widehat \xi^2+1})^2}
\right)d\widehat \xi
\]
 Many estimates done below will be based on the following standard argument that we explain now in detail.

We have several regimes:

(1). $\widehat x\in [0, 1]$. Notice that integration over any fixed interval $\widehat\xi\in [0,C]$ gives a contribution $O(\lambda)$, so we only need to control large $\widehat\xi$. Using \eqref{wow1}, one gets the following asymptotics for the expression under the logarithm
\[
\frac{(\widehat x+\widehat\xi)^2}{(\widehat
x-\widehat\xi)^2}r_1^{-4}=\left(1+\frac{4\widehat x}{\widehat\xi}+O(\widehat\xi^{-2})\right)
\left(
1+4\frac{\sqrt{\widehat x^2+1}-\widehat x}{\widehat \xi}+O(\widehat\xi^{-2})
\right), \quad\widehat\xi\to \infty
\]
Then, using the Taylor expansion for the logarithm, we get
\[
\lambda \int_0^{1/\lambda} \log\left( \frac{(\widehat x+\widehat
\xi)^2+(\sqrt{\widehat x^2+1}+\sqrt{\widehat \xi^2+1})^2}{(\widehat
x-\widehat \xi)^2+(\sqrt{\widehat x^2+1}-\sqrt{\widehat
\xi^2+1})^2}\cdot  \frac{(\widehat x-\widehat \xi)^2+(\sqrt{\widehat
x^2+1}+\sqrt{\widehat \xi^2+1})^2}{(\widehat x+\widehat
\xi)^2+(\sqrt{\widehat x^2+1}-\sqrt{\widehat \xi^2+1})^2}
\right)d\widehat \xi
\]
\begin{equation}\label{odnushka}
=4\lambda\sqrt{\widehat x^2+1}\log
(1/\lambda)+O(\lambda)=4\sqrt{x^2+\lambda^2}\log
(1/\lambda)+O(\lambda)=
\end{equation}
\[
\hspace{3cm}=2\sqrt{x^2+\lambda^2}\log^+(x^2+\lambda^2)+O(\lambda)
\]
\smallskip
Given any fixed $\delta\in (0,1)$, we have two cases.\smallskip

(2). Take $x\in (\delta, 1] $. We trivially get
\begin{equation}\label{semech}
\lim_{\lambda\to 0}\max_{x\in [\delta,1]}\left|\int_0^1 K_1(x,\tau,y_\lambda)d\tau- \int_0^1
K_1(x,\tau,y_0)d\tau\right|=0
\end{equation}

(3). Let $x\in (\lambda,\delta]$. We substitute \eqref{miracle2} to \eqref{wow1} and get
\[
\int_0^1 K_1(x,\tau,y_\lambda)d\tau=
2\lambda\int_0^{1/\lambda} \log\left|\frac{\widehat x+\widehat \xi}{\widehat\xi-\widehat x}\right|d\widehat\xi+4\lambda\int_0^{1/\lambda}\log\left(1+\frac{\sqrt{\widehat x^2+1}-\widehat x}{\widehat x+\widehat\xi}+O(\widehat\xi^{-2})\right)d\widehat\xi
\]
\[
=2x\int_0^{1/x}\log\left|\frac{1+t}{1-t}\right|dt+4\lambda(\sqrt{1+\widehat x^2}-\widehat x)\int_0^{1/x} \frac{1}{1+t}dt+O(\lambda)
\]
\[
=4x\log(1/x)+O(x)+4\lambda(\sqrt{1+\widehat x^2}-\widehat
x)\log(1/x)+O(\lambda)
=4\log(1/x)(x+\lambda\sqrt{1+\widehat x^2}-\lambda\widehat x)+O(x)
\]
\begin{equation}\label{treshka}
=2\sqrt{x^2+\lambda^2}\log^+(x^2+\lambda^2)+O(x)
\end{equation}
The bounds above imply
\begin{equation}\label{hevo}
\lim_{\lambda\to 0}\left\|\frac{\displaystyle \int_0^1
K_1(x,\tau,y_\lambda)d\tau}{\sqrt{x^2+\lambda^2}\log^+(x^2+\lambda^2)}-
\frac{\displaystyle \int_0^1 K_1(x,\tau,y_0)d\tau}{x\log^+(x^2)}\right\|_{L^\infty[0,1]}=0
\end{equation}
Indeed, given any $\epsilon>0$, we use \eqref{a10},\eqref{odnushka}, and \eqref{treshka} to get
\[
\left\|\frac{\displaystyle \int_0^1
K_1(x,\tau,y_\lambda)d\tau}{\sqrt{x^2+\lambda^2}\log^+(x^2+\lambda^2)}-
\frac{\displaystyle \int_0^1 K_1(x,\tau,y_0)d\tau}{x\log^+(x^2)}\right\|_{L^\infty[0,\delta]}\lesssim \frac{1}{\log^+(\delta^2+\lambda^2)}<\epsilon/2
\]
for $\delta<\delta(\epsilon)$ and $\lambda<\delta(\epsilon)$. For fixed $\delta<\delta(\epsilon)$, we have
\[
\left\|\frac{\displaystyle \int_0^1
K_1(x,\tau,y_\lambda)d\tau}{\sqrt{x^2+\lambda^2}\log^+(x^2+\lambda^2)}-
\frac{\displaystyle \int_0^1 K_1(x,\tau,y_0)d\tau}{x\log^+(x^2)}\right\|_{L^\infty[\delta,1]}\leq \epsilon/2,
\]
as long as $\lambda<\lambda(\epsilon)$ (by \eqref{semech}). This yields \eqref{hevo}.
  \smallskip
\end{proof}

 Later, we will need the following result
 \begin{lemma} \label{auxi} Suppose $\|g(x)-x\|_{\dot Lip[0,1]}\leq \delta\ll
 1$. Then,
\[
\left|\frac{\displaystyle \int_0^1
K_1(x,\tau,\sqrt{\lambda^2+g^2(\tau)})d\tau}{\sqrt{x^2+\lambda^2}\log^+(x^2+\lambda^2)}\right|\lesssim
1
\]
uniformly in $x\in [0,1]$, $\lambda\in (0,1]$, and $g$.
 \end{lemma}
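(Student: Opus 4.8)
The plan is to reduce the bound to a uniform estimate on the unscaled integral $\int_0^1 K_1(x,\tau,y)\,d\tau$ with $y=\sqrt{\lambda^2+g^2}$, and to exploit the pointwise bounds on $K_1$ already established in Section~3, namely
\[
K_1(x,\tau,y)\lesssim y(x)y(\tau)\,\frac{\log^+\bigl((x-\tau)^2+(y(x)-y(\tau))^2\bigr)}{y^2(x)+y^2(\tau)+(x-\tau)^2},
\]
valid for $y\in[0,C]$, together with $y(x)\sim\sqrt{x^2+\lambda^2}$, $y(\tau)\sim\sqrt{\tau^2+\lambda^2}$, which follow from $g(x)=x(1+O(\delta))$. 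The point is that we do not need the delicate asymptotics of the previous lemma: only an $O(1)$ bound on the quotient, which is cruder and robust in $g$.

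First I would rescale by setting $x=\lambda\widehat x$, $\tau=\lambda\widehat\tau$, which turns the claim into: $\bigl|\lambda^{-1}\int_0^{1/\lambda}K_1(\lambda\widehat x,\lambda\widehat\tau,\cdot)\,d\widehat\tau\bigr|\lesssim \sqrt{\widehat x^2+1}\,\lambda\log^+(\lambda^2(\widehat x^2+1))$ (for $x<\delta$ say, the regime where the $\log^+$ is genuinely $\sim\log(1/\lambda)$). Using the upper bound above, one sees the integrand is $\lesssim \sqrt{\widehat x^2+1}\,\sqrt{\widehat\tau^2+1}\cdot\dfrac{\log^+\bigl((\widehat x-\widehat\tau)^2+(\widehat x-\widehat\tau)^2\bigr)}{\widehat x^2+\widehat\tau^2+1}$ after factoring $\lambda^2$ out of numerator and denominator and cancelling. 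Then I split the $\widehat\tau$-integral: over $\widehat\tau\in[0,2\widehat x+1]$ the denominator is $\gtrsim 1+\widehat x^2$, the numerator $\log^+$ is bounded by $\log^+(\text{something}\lesssim\widehat x^2)\lesssim\log^+\widehat x$, so this piece contributes $\lesssim \sqrt{\widehat x^2+1}\cdot(1+\widehat x)\cdot\log^+\widehat x/(1+\widehat x^2)\cdot(1+\widehat x)\lesssim \sqrt{\widehat x^2+1}\,\log^+\widehat x$; multiplied by $\lambda$ this is comfortably $\lesssim \sqrt{\widehat x^2+1}\,\lambda\log(1/\lambda)$ once $\widehat x\le\delta/\lambda$. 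Over $\widehat\tau>2\widehat x+1$ the integrand behaves like $\sqrt{\widehat x^2+1}\cdot\widehat\tau\cdot\log\widehat\tau/\widehat\tau^2=\sqrt{\widehat x^2+1}\,\log\widehat\tau/\widehat\tau$, whose integral up to $1/\lambda$ is $\lesssim\sqrt{\widehat x^2+1}\,\log^2(1/\lambda)$; but here I would instead use the sharper Taylor-expansion argument of \eqref{odnushka}/\eqref{treshka}, which shows the tail integrand is actually $\sim\sqrt{\widehat x^2+1}/\widehat\tau+O(\widehat\tau^{-2})$, giving $\lesssim\sqrt{\widehat x^2+1}\,\log(1/\lambda)$, exactly the required order. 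For $x\in[\delta,1]$ the denominator $\sqrt{x^2+\lambda^2}\log^+(x^2+\lambda^2)$ is bounded below by a positive constant and the numerator is manifestly $O(1)$, so the estimate is trivial there.

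The one technical point requiring care is the overlap/matching between the regimes $x\sim\lambda$ (where the scaling makes everything order $\lambda$ times a log), $\lambda\ll x\ll\delta$, and $x\gtrsim\delta$: one must check the implied constants do not blow up at the transitions. This is handled exactly as in the proof of \eqref{hevo} — by choosing $\delta$ small to control the $1/\log^+(\delta^2+\lambda^2)$ factor and then fixing $\delta$ before sending $\lambda\to0$ — except that here we only need an upper bound, not convergence, so it is strictly easier. I expect the main obstacle to be bookkeeping: tracking the $g$-dependence to confirm that all the constants are genuinely uniform in $g$ with $\|g-x\|_{\dot Lip[0,1]}\le\delta$, which amounts to noting that every estimate used only $y(x)\sim\sqrt{x^2+\lambda^2}$, $|y'(x)|\lesssim1$, and $y\in[0,C]$ — all of which hold uniformly for such $g$.
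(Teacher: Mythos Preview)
Your overall strategy---use the Section~3 pointwise bound on $K_1$, rescale, split the $\widehat\tau$-integral into local and tail pieces, and treat the tail by Taylor expansion of the actual kernel---is exactly the route the paper intends (``repeats the argument in the previous lemma''). However, there is a concrete error in the execution that propagates through the local estimate.

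The factor $\log^+\!\bigl((x-\tau)^2+(y(x)-y(\tau))^2\bigr)$ in \eqref{vza} does \emph{not} become $\log^+\!\bigl((\widehat x-\widehat\tau)^2+\cdots\bigr)$ after rescaling: the argument picks up a factor $\lambda^2$, and since $\log^+A=|\log A|+1$ is not homogeneous, the correct rescaled factor is $\log^+\!\bigl(\lambda^2[(\widehat x-\widehat\tau)^2+\cdots]\bigr)$. Concretely, for $|\widehat x-\widehat\tau|\in[1,\widehat x]$ this equals $2\log(1/|x-\tau|)+1$, not $2\log|\widehat x-\widehat\tau|+1$. Integrating the correct expression over the local window gives $\sim\widehat x\log(1/x)$, whereas your version gives $\sim\widehat x\log\widehat x=\widehat x\log(x/\lambda)$. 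You then compare to a target $\lambda\sqrt{\widehat x^2+1}\,\log(1/\lambda)$, but this is also wrong: the denominator $\sqrt{x^2+\lambda^2}\,\log^+(x^2+\lambda^2)$ is $\sim x\log(1/x)$ for $\lambda<x<\delta$, not $\sim x\log(1/\lambda)$. With your two errors combined, the local estimate fails precisely in the intermediate range $x\in(\sqrt\lambda,\delta)$, where $\log\widehat x/\log(1/x)\to\infty$ as $\lambda\to 0$.

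The cleanest repair is to abandon \eqref{vza} altogether and use the simpler bound established just above it,
\[
K_1(x,\tau,y)\lesssim \log\!\Bigl(1+\frac{4y(x)y(\tau)}{(x-\tau)^2}\Bigr),
\]
which \emph{is} scale-invariant (the $\lambda^2$ cancels inside the $\log(1+\cdot)$). With $y(x)\sim x+\lambda$, $y(\tau)\sim\tau+\lambda$ uniformly in $g$, a direct split at $|\tau-x|\sim x+\lambda$ gives $\int_0^1 K_1\,d\tau\lesssim (x+\lambda)\log^+(x+\lambda)$ in one stroke, with no need to invoke the Taylor expansion separately for the tail. If you prefer to keep your hybrid argument, note also that \eqref{odnushka}/\eqref{treshka} were derived using the algebraic identities \eqref{miracle1}--\eqref{wow1} specific to the exact hyperbola $y_\lambda=\sqrt{\lambda^2+x^2}$; for general $g$ you must redo the large-$\widehat\tau$ expansion of $K_1$ directly (which does work, using only $\widehat y(\widehat\tau)\sim\widehat\tau$), rather than cite those formulas.
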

Its proof repeats the argument in the previous lemma (see also the
proof of lemma \ref{hard} below to check how the problem can be
reduced to the homogeneous one for which the scaling can be easily
performed to get the desired bound). This result can also be
obtained by comparing to the case $g=x$ and using the stability
estimates established in lemma \ref{bbb1} below.

\bigskip

{\bf 2. The coefficient $A_2$.}\quad\quad

\begin{lemma}
 For every fixed $\delta>0$, we have
\begin{equation}\label{a2-1}
A_2(x,\lambda)\to A_2(x,0)=\frac{2\log(x^{-2}+1)}{x\log^+(x^2)},
\quad \lambda\to 0
\end{equation}
uniformly over $x\in [\delta,1]$. Moreover, we have an estimate
\begin{equation}\label{a2-2}
 A_2(x,\lambda)\sim \frac{1}{x}
\end{equation}
which holds uniformly in $x\in (0,1]$ and $\lambda\in (0,1]$.

\end{lemma}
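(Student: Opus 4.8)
The plan is to treat the two assertions of the lemma separately. For \eqref{a2-1} I pass to the limit $\lambda\to0$ in the three ingredients of \eqref{a2} --- $\int_0^1 K_1(x,\tau,y_\lambda)\,d\tau$, $\frac{x}{\sqrt{\lambda^2+x^2}}\int_0^1 y_\lambda'(\tau)K_2(x,\tau,y_\lambda)\,d\tau$ and $B_2$ --- separately. For $x\in[\delta,1]$ the relevant integrands are bounded uniformly in $\lambda\in(0,1]$ and $\tau\in[0,1]\setminus\{x\}$ (for $K_{1,2}$ by a $\delta$-dependent constant times $\log^+|x-\tau|$, using \eqref{vza}; for the integrand of $B_2$ outright, since its prefactor $x-\xi\sqrt{\lambda^2+x^2}/\sqrt{\lambda^2+\xi^2}$ kills the only singularity, at $\tau=x$), and they converge pointwise as $\lambda\to0$ because $y_\lambda\to y_0$, $y_\lambda'\to y_0'\equiv1$ uniformly on $[\delta/2,1]$; dominated convergence then gives \eqref{a2-1}, uniformly on $[\delta,1]$. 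To identify the limit I use that $y_0(x)=x$ forces $K_1(x,\tau,y_0)=K_2(x,\tau,y_0)=\log\frac{(x+\tau)^2}{(x-\tau)^2}$ and $y_0'\equiv1$, so the first two terms in the numerator of \eqref{a2} cancel and $A_2(x,0)=B_2\big|_{\lambda=0}\,(x^2\log^+(x^2))^{-1}$; in $B_2\big|_{\lambda=0}$ the first integral vanishes identically (its prefactor is $\equiv0$) and the second equals $4x\int_0^1\frac{\tau\,d\tau}{x^2+\tau^2}=2x\log(1+x^{-2})$.

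For \eqref{a2-2} I first put the numerator $N(x,\lambda)$ of \eqref{a2} into closed form. With $r_1=\frac{x+\tau}{\sqrt{\lambda^2+x^2}+\sqrt{\lambda^2+\tau^2}}$ one has $y_\lambda(x)\pm y_\lambda(\tau)=(x\pm\tau)r_1^{\mp1}$, so \eqref{miracle-1}--\eqref{miracle-2} give $\int_0^1 K_1(x,\tau,y_\lambda)\,d\tau=\int_0^1\log\frac{(x+\tau)^2}{(x-\tau)^2}\,d\tau-4\int_0^1\log r_1\,d\tau$ and $\int_0^1 y_\lambda'(\tau)K_2(x,\tau,y_\lambda)\,d\tau=\int_0^1\frac{\tau}{\sqrt{\lambda^2+\tau^2}}\log\frac{(x+\tau)^2}{(x-\tau)^2}\,d\tau$, while the identity $x-\frac{\tau\sqrt{\lambda^2+x^2}}{\sqrt{\lambda^2+\tau^2}}=\frac{\lambda^2(x^2-\tau^2)}{\sqrt{\lambda^2+\tau^2}\,(x\sqrt{\lambda^2+\tau^2}+\tau\sqrt{\lambda^2+x^2})}$ cancels the $x^2-\tau^2$ in the denominators of $B_2$ and produces $B_2=B_2^{(1)}+B_2^{(2)}$ with
\[
B_2^{(1)}=-\frac{4x\lambda^2}{\sqrt{x^2+\lambda^2}}\int_0^1\frac{\tau\,d\tau}{\sqrt{\lambda^2+\tau^2}\,(x\sqrt{\lambda^2+\tau^2}+\tau\sqrt{\lambda^2+x^2})(r_1+r_1^{-1})}\le0,
\]
\[
B_2^{(2)}=\frac{2x}{\sqrt{x^2+\lambda^2}}\int_0^1\Bigl(x+\tfrac{\tau\sqrt{\lambda^2+x^2}}{\sqrt{\lambda^2+\tau^2}}\Bigr)\frac{2\tau\,r_1^{-1}}{(x-\tau)^2+(\sqrt{\lambda^2+x^2}+\sqrt{\lambda^2+\tau^2})^2}\,d\tau\ge0 .
\]
The upper bound $A_2(x,\lambda)\lesssim 1/x$ then follows by bounding each of these four integrals by $\lesssim\sqrt{x^2+\lambda^2}\,\log^+(x^2+\lambda^2)$ --- the same splitting ($\tau\in[0,x],[x,2x],[2x,1]$, together with the trivial lemma on $(b-a)^{-1}\int_a^b\eta^{-1}d\eta$) that produced \eqref{odnushka}--\eqref{treshka} --- and dividing by $x\sqrt{x^2+\lambda^2}\log^+(x^2+\lambda^2)$.

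For the lower bound $A_2\gtrsim 1/x$, on $\sqrt{x^2+\lambda^2}\ge\delta$ the function $xA_2=N\big/(\sqrt{x^2+\lambda^2}\log^+(x^2+\lambda^2))$ extends continuously to the compact set $\{x,\lambda\in[0,1]:x^2+\lambda^2\ge\delta^2\}$ and is positive there (on the face $\lambda=0$ by \eqref{a2-1}, near it by the first paragraph, and elsewhere because $N$ does not vanish: $N\ge T_2+B_2^{(2)}>0$ once one checks $T_1+B_2^{(1)}\ge0$, where $T_1=-4\int_0^1\log r_1\ge0$ and $T_2=\int_0^1\log\frac{(x+\tau)^2}{(x-\tau)^2}\bigl(1-\frac{x\tau}{\sqrt{\lambda^2+x^2}\sqrt{\lambda^2+\tau^2}}\bigr)d\tau\ge0$), hence $A_2\sim 1/x$ there. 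For $\sqrt{x^2+\lambda^2}<\delta$ I rescale $x=\lambda\widehat x$, turning all four integrals into integrals over $\widehat\tau\in(0,1/\lambda)$ of the profile $\sqrt{\widehat x^2+1}$, and read off the coefficient of $\lambda\log(1/\lambda)$ in $N$ from the $\widehat\tau\to\infty$ tails of the four pieces: with $h=\sqrt{\widehat x^2+1}$, the contributions of $\int K_1$, of $-\frac{x}{\sqrt{\lambda^2+x^2}}\int y_\lambda'K_2$, of $B_2^{(1)}$ and of $B_2^{(2)}$ are respectively $4h$, $-4\widehat x^2/h$, $-2\widehat x/(h(\widehat x+h))$ and $2\widehat x^2/h+2\widehat x$, and --- using $(\widehat x+h)^{-1}=h-\widehat x$ --- they sum to exactly $4h\ge4$. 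Thus, for $\widehat x$ in a bounded set, $N=(4+o(1))\sqrt{x^2+\lambda^2}\log(1/\lambda)\sim\sqrt{x^2+\lambda^2}\log^+(x^2+\lambda^2)$, which with the upper bound gives \eqref{a2-2}.

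The main obstacle is to make this $o(1)$ uniform. For $\widehat x$ bounded, the truncation error at $\widehat\tau=1/\lambda$ and the contribution of $\widehat\tau=O(1)$ are $O(\lambda)$, exactly as in \eqref{odnushka}, hence negligible against $\sqrt{x^2+\lambda^2}\log(1/\lambda)$. For $1\ll\widehat x<\delta/\lambda$ (equivalently $\lambda\ll x<\delta$) the leading term is no longer $4h\,\lambda\log(1/\lambda)$, and one compares $N$ with $N(x,0)=2x\log(1+x^{-2})$ instead: the explicit factor $\lambda^2$ in $T_1$ and in $B_2^{(1)}$, together with the smallness of $1-\frac{x\tau}{\sqrt{\lambda^2+x^2}\sqrt{\lambda^2+\tau^2}}$ in $T_2$ when $\lambda\ll x$, shows that all $\lambda$-corrections to the four pieces are $O(\lambda^2x^{-1}\log(1/\lambda))=o(x\log(1/x))$, whence $N=2x\log(1+x^{-2})(1+o(1))\sim\sqrt{x^2+\lambda^2}\log^+(x^2+\lambda^2)$ on this range as well. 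Pushing these two families of error estimates through uniformly over the indicated ranges of $\widehat x$ (and, with them, the remaining positivity check $T_1+B_2^{(1)}\ge0$ on the compact piece) is the only genuinely technical point.
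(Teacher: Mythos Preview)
Your overall strategy matches the paper's: analyse the three ingredients of \eqref{a2} separately, use dominated convergence for the first claim, and for the two-sided bound \eqref{a2-2} split the range of $x$ according to the size of $\widehat x=x/\lambda$. The asymptotic computation you carry out for bounded $\widehat x$ --- summing the four tail contributions to $4\sqrt{\widehat x^2+1}$ --- is correct and is a tidy repackaging of what the paper obtains piecewise in \eqref{raz-a} together with the $\widehat x\in[0,1]$ estimates for $B_2^{(1)}$ and $B_2^{(2)}$.

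There are, however, two places where your argument diverges from the paper's and leaves real gaps. First, for the compactness argument on $\{x^2+\lambda^2\ge\delta^2\}$ you need $N>0$, and you propose to obtain this via $T_1+B_2^{(1)}\ge0$, which you neither prove nor reduce to something obvious. The paper avoids this entirely: it shows $B_2=B_2^{(1)}+B_2^{(2)}\ge0$ by a direct pointwise comparison of the integrands in \eqref{fo1}--\eqref{fo2} (namely the elementary inequality $\frac{\widehat x\sqrt{\widehat\xi^2+1}+\widehat\xi\sqrt{\widehat x^2+1}}{(\widehat x+\widehat\xi)^2+r_1^2(\widehat x-\widehat\xi)^2}\ge\frac{1}{(\widehat x\sqrt{\widehat\xi^2+1}+\widehat\xi\sqrt{\widehat x^2+1})(1+r_1^2)}$). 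Once $B_2\ge0$ is known, positivity of $N$ for $\lambda>0$ is immediate since $N\ge T_1+T_2>0$, and your compactness argument goes through without the unverified claim.

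Second, in the regime $1\ll\widehat x<\delta/\lambda$ you assert that all $\lambda$-corrections to $N(x,0)$ are $O(\lambda^2x^{-1}\log(1/\lambda))=o(x\log(1/x))$. As written this is not a uniform statement; what is actually true (and what you need) is that the ratio is $O(\widehat x^{-2})$, hence small once $\widehat x\ge C_4$ with $C_4$ large. More importantly, you never address the correction $B_2^{(2)}-B_2^{(2)}(x,0)$, which carries the leading term and is not covered by the ``explicit $\lambda^2$'' remark. The paper handles this range differently and more robustly: rather than comparing with $N(x,0)$, it proves the explicit lower bound $B_2\ge C_3\,x\log^+x$ for $\widehat x>C_4$ (this is \eqref{szadi}, obtained from \eqref{ang1}--\eqref{ang2}), and combines it with \eqref{raz-b} (which shows the first two terms are $\ge -Cx$) to get $N\gtrsim x\log^+x$ directly. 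This sidesteps the need to control $B_2^{(2)}-B_2^{(2)}(x,0)$.
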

\begin{proof}
The expression for $A_2(x,0)$ is easy to compute and the first part
of the lemma is immediate. The formula for $A_2(x,\lambda)$ contains three terms. The first one
involves $K_1$ and its asymptotics was established before. Consider
the second term. By \eqref{miracle1}, we get
\[
\int_0^1 y_\lambda'K_2(x,\tau,y_\lambda)d\tau= \lambda
\int_0^{1/\lambda} \frac{\widehat \xi}{\sqrt{\widehat
\xi^2+1}}\log\left(\frac{\widehat x+\widehat\xi}{\widehat
x-\widehat\xi}\right)^2d\widehat\xi
\]
The similar analysis yields:

(1). Uniformly in $x\in (\delta,1]$, we get
\begin{equation}
\label{shod1} \int_0^1 y_\lambda'K_2(x,\tau,y_\lambda)d\tau\to
\int_0^1 K_2(x,\tau,y_0)d\tau, \quad {\rm as}\quad \lambda\to 0
\end{equation}

(2). If $\widehat x\in [0,1]$, then we can split the integral into
two. The first one is
\[
\int_0^{1} \frac{\widehat \xi}{\sqrt{\widehat
\xi^2+1}}\log\left(\frac{\widehat x+\widehat\xi}{\widehat
x-\widehat\xi}\right)^2d\widehat\xi
\]
We have
\[
\int_0^1 \widehat \xi \log\Bigr(1+\frac{2\widehat x\widehat
\xi}{(\widehat x-\widehat\xi)^2}\Bigl)d\widehat \xi=\widehat x^2
\int_{0}^{\widehat x^{-1}}
t\log\Bigl(1+\frac{2t}{(1-t)^2}\Bigr)dt\sim \widehat x
\]
So, the integration over $[0,1]$ amounts to $O(x)$ after
multiplication by $\lambda$.

For the integral over $[1,\lambda^{-1}]$, we get
\[
\int_1^{1/\lambda} \frac{\widehat \xi}{\sqrt{\widehat
\xi^2+1}}\log\left(\frac{\widehat x+\widehat\xi}{\widehat
x-\widehat\xi}\right)^2d\widehat\xi =4\widehat
x\log(1/\lambda)+O(\widehat x)
\]
Multiplication by $\lambda$ yields
\[
\int_0^1 y_\lambda'K_2(x,\tau,y_\lambda)d\tau
=x\Bigl(4\log(1/\lambda)+O(1)\Bigr)
\]

(3). If $x\in (\lambda, \delta)$, then the integral over $[0,1]$ can
be handled as before and its contribution is at most $\widehat
x^{-1}$. The integral over $[1,1/\lambda]$ gives
\[
\int_1^{1/\lambda}(1+O(\widehat\xi^{-2}))\log\left(\frac{\widehat
x+\widehat\xi}{\widehat x-\widehat\xi}\right)^2d\widehat\xi=\widehat
x\int_{1/\widehat x}^{1/x} \left(1+\frac{1}{\widehat
x^2t^2}\right)\log\left(\frac{1+t}{1-t}\right)^2dt=4\widehat
x(\log(1/x)+O(1))
\]
and we have
\[
\int_0^{1} y_\lambda'K_2(x,\tau,y_\lambda)d\tau= 4x
(\log(1/x)+O(1)), \quad \lambda\to 0
\]
\smallskip
Summarizing, we get the uniform bound
\begin{equation}\label{dvushka}
\int_0^1 y_\lambda'K_2(x,\tau,y_\lambda)d\tau= \left\{
\begin{array}{cc}
4x(\log(1/\lambda)+O(1)),&\quad x<\lambda\\
4x(\log(1/x)+O(1)), &\quad x>\lambda
\end{array}
\right.
\end{equation}
For the third term in the expression for $A_2$, we have
\[
B_2=B_2^{(1)}+B_2^{(2)}
\]

\[
B_2^{(1)}= \frac{2x}{\sqrt{x^2+\lambda^2}}\int_0^1 \left(x-
\frac{\xi\sqrt{\lambda^2+x^2}}{\sqrt{\lambda^2+\xi^2}}\right) \left(
\frac{y_\lambda(x)+y_\lambda(\xi)}{(x+\xi)^2+(y_\lambda(x)+y_\lambda(\xi))^2}
\right.\]

\[\left.
-\frac{y_\lambda(x)-y_\lambda(\xi)}{(x-\xi)^2+(y_\lambda(x)-y_\lambda(\xi))^2}
\right)d\xi
\]
\[
B_2^{(2)}=\frac{2x}{\sqrt{x^2+\lambda^2}}\int_0^1 \left(x+
\frac{\xi\sqrt{\lambda^2+x^2}}{\sqrt{\lambda^2+\xi^2}}\right) \left(
\frac{y_\lambda(x)+y_\lambda(\xi)}{(x-\xi)^2+(y_\lambda(x)+y_\lambda(\xi))^2}
\right.
\]
\[
\left.
-\frac{y_\lambda(x)-y_\lambda(\xi)}{(x+\xi)^2+(y_\lambda(x)-y_\lambda(\xi))^2}
\right)d\xi
\]
Rescale the variables and recall the formulas (\ref{miracle1}) and
(\ref{miracle2}).

One gets
\begin{equation}\label{fo1}
B_2^{(1)}=-\lambda \frac{4\widehat x}{\sqrt{\widehat x^2+1}}
\int_0^{1/\lambda}\frac{\widehat \xi
r_1}{\sqrt{1+\widehat\xi^2}\Bigl(\widehat
x\sqrt{1+\widehat\xi^2}+\widehat \xi\sqrt{1+\widehat
 x^2}\Bigr)(1+r_1^2)}d\widehat\xi
\end{equation}
As before, we consider two cases.\smallskip

 (1). $\widehat x\in [0,1]$. For the integral over $[0,1]$,
\[
0\leq \int_0^{1}\frac{\widehat \xi
r_1}{\sqrt{1+\widehat\xi^2}\Bigl(\widehat
x\sqrt{1+\widehat\xi^2}+\widehat \xi\sqrt{1+\widehat
x^2}\Bigr)(1+r_1^2)}d\widehat\xi\lesssim 1
\]
The other integral allows the estimate
\[
\int_1^{1/\lambda}\frac{\widehat \xi
r_1}{\sqrt{1+\widehat\xi^2}\Bigl(\widehat
x\sqrt{1+\widehat\xi^2}+\widehat \xi\sqrt{1+\widehat
x^2}\Bigr)(1+r_1^2)}d\widehat\xi\lesssim \log(1/\lambda)
\]
since $r_1\leq 1$.\smallskip

(2). $\widehat x\in [1,1/\lambda]$. We can write
\begin{equation}\label{ang2}
0\leq \int_0^{1/\lambda}\frac{\widehat \xi
r_1}{\sqrt{1+\widehat\xi^2}\Bigl(\widehat
x\sqrt{1+\widehat\xi^2}+\widehat \xi\sqrt{1+\widehat
x^2}\Bigr)(1+r_1^2)}d\widehat\xi\lesssim
\frac{\log(1/\lambda)}{\widehat x}
\end{equation}

\bigskip

For $B_2^{(2)}$, we have similarly

\begin{equation}\label{fo2}
B_2^{(2)}=\frac{4\lambda\widehat x}{\sqrt{\widehat
x^2+1}}\int_0^{1/\lambda}\frac{\widehat
x\sqrt{\widehat\xi^2+1}+\widehat \xi\sqrt{\widehat
x^2+1}}{\sqrt{\widehat \xi^2+1}}\cdot \frac{\widehat\xi
r_1}{(\widehat x+\widehat\xi)^2+r_1^2(\widehat x-\widehat
\xi)^2}d\widehat\xi
\end{equation}
(1). If $\widehat x\in [0,1]$, we get
\[
r_1\lesssim \widehat x+\widehat\xi
\]
and therefore
\[
0<\int_0^{1}\frac{\widehat x\sqrt{\widehat\xi^2+1}+\widehat
\xi\sqrt{\widehat x^2+1}}{\sqrt{\widehat \xi^2+1}}\cdot
\frac{\widehat\xi r_1}{(\widehat x+\widehat\xi)^2+r_1^2(\widehat
x-\widehat \xi)^2}d\widehat\xi\lesssim 1
\]
For the other interval, we use $r_1=1+O(\widehat\xi^{-1})$ to get
\[
\int_1^{1/\lambda}\frac{\widehat x\sqrt{\widehat\xi^2+1}+\widehat
\xi\sqrt{\widehat x^2+1}}{\sqrt{\widehat \xi^2+1}}\cdot
\frac{\widehat\xi r_1}{(\widehat x+\widehat\xi)^2+r_1^2(\widehat
x-\widehat \xi)^2}d\widehat\xi=\frac{\widehat x+\sqrt{\widehat
x^2+1}}{2}\log(1/\lambda)+O(1)
\]
(2). If $\widehat x\in [1,1/\lambda]$, then the asymptotics of $r_1$
yields

\begin{equation}\label{ang1}
\int_0^{1/\lambda}\frac{\widehat x\sqrt{\widehat\xi^2+1}+\widehat
\xi\sqrt{\widehat x^2+1}}{\sqrt{\widehat \xi^2+1}}\cdot
\frac{\widehat\xi r_1}{(\widehat x+\widehat\xi)^2+r_1^2(\widehat
x-\widehat \xi)^2}d\widehat\xi\sim  \widehat x\int_0^{1/\lambda}
\frac{\widehat\xi}{\widehat x^2+\widehat \xi^2}d\widehat\xi\sim
\widehat x\log^+x
\end{equation}

Now, the formulas (\ref{fo1}) and (\ref{fo2}) imply that
$B_2^{(2)}\geq 0$ and $B_2^{(1)}\leq 0$. However,
$B_2=B_2^{(2)}+B_2^{(1)}\geq 0$.  Indeed,  this follows from \eqref{fo1},
\eqref{fo2}, and an estimate
\[
\frac{\widehat x\sqrt{\widehat\xi^2+1}+\widehat\xi\sqrt{1+\widehat
x^2}}{(\widehat x+\widehat \xi)^2+r_1^2(\widehat
x-\widehat\xi)^2}\geq  \frac{1}{(\widehat
x\sqrt{\widehat\xi^2+1}+\widehat\xi\sqrt{1+\widehat x^2)}(1+r_1^2)}
\]
Thus, we have
\[
0\leq B_2\leq B_2^{(2)}\lesssim x\log^+\lambda, \quad 0<x<\lambda
\]
and
\[
0\leq B_2\leq B_2^{(2)}\lesssim x\log^+x, \quad \lambda <x<1
\]
Moreover, \eqref{ang2} and \eqref{ang1} provide a lower bound
\[
B_2\geq \lambda( C_1\widehat x\log^+x-C_2\widehat x^{-1}\log^+\lambda), \quad
x>\lambda
\]
and therefore
\begin{equation}\label{szadi}
B_2\geq C_3 x\log^+x
\end{equation}
for $\widehat x>C_4$ where $C_4$ is sufficiently large absolute
constant.\smallskip

 Consider the sum of the first two terms in \eqref{a2}. We have
\[
\int_0^1 K_1(x,\tau, y_\lambda)d\tau-
\frac{x}{\sqrt{\lambda^2+x^2}}\int_0^1
y'_\lambda(\tau)K_2(x,\tau,y_\lambda ) d\tau
\]
\begin{equation}\label{raz-a}
=4\log(1/\lambda)(\sqrt{x^2+\lambda^2}-x)+O(\lambda), \quad
0<x<\lambda
\end{equation}
and
\begin{equation}\label{raz-b}
=4\log(1/x)(\sqrt{x^2+\lambda^2}-x)+O(x), \quad \lambda<x<\delta
\end{equation}
Add $B_2$ to this expression and divide by
$x\sqrt{x^2+\lambda^2}\log^+(x^2+\lambda^2)$.
On the interval $x\in (0,C_4\lambda)$, we use \eqref{raz-a} and $B_2\geq 0$ to get
$A_2\sim x^{-1}$. For $x\in (C_4\lambda,\delta)$, we apply \eqref{raz-b} and \eqref{szadi} to produce that same bound.
If $x\in[\delta,1]$, we have
convergence to $A_2(x,0)$ which is positive.

\end{proof}
Similar to lemma \ref{auxi}, we have
 \begin{lemma} \label{auxi1} Suppose $\|g(x)-x\|_{\dot Lip[0,1]}\leq \delta\ll
 1$. Then,
\[
\left|\frac{\displaystyle \int_0^1\Bigl(\sqrt{\lambda^2+g^2(\tau})\Bigr)'
K_2(x,\tau,\sqrt{\lambda^2+g^2(\tau)})d\tau}{x\log^+(x^2+\lambda^2)}\right|\lesssim
1
\]
uniformly in $x\in (0,1]$, $\lambda\in (0,1]$, and $g$.
 \end{lemma}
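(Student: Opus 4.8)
The plan is to reproduce, with $y_\lambda(\tau)=\sqrt{\lambda^2+\tau^2}$ replaced throughout by $\sqrt{\lambda^2+g^2(\tau)}$, the computation of $\int_0^1 y_\lambda'(\tau)K_2(x,\tau,y_\lambda)d\tau$ that leads to \eqref{dvushka}; this is the same scheme used for Lemma~\ref{auxi}. The inputs needed about $g$ are elementary. From $g(0)=0$ and the a.e.\ bound $g'\in[1-\delta,1+\delta]$ one gets $g(\tau)\sim\tau$, $g'(\tau)\sim 1$, and $g(x)-g(\tau)\sim x-\tau$ (for $x\ge\tau$, and symmetrically), with all implied constants tending to $1$ as $\delta\to 0$. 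Hence $\sqrt{\lambda^2+g^2(\tau)}\sim y_\lambda(\tau)$,
\[
0<\Bigl(\sqrt{\lambda^2+g^2(\tau)}\Bigr)'=\frac{g(\tau)g'(\tau)}{\sqrt{\lambda^2+g^2(\tau)}}\sim\frac{\tau}{\sqrt{\lambda^2+\tau^2}}=y_\lambda'(\tau)\le 1+\delta,
\]
and, since $\sqrt{\lambda^2+g^2(x)}-\sqrt{\lambda^2+g^2(\tau)}=\dfrac{(g(x)-g(\tau))(g(x)+g(\tau))}{\sqrt{\lambda^2+g^2(x)}+\sqrt{\lambda^2+g^2(\tau)}}$, each of the four quantities $(x\pm\tau)^2+\bigl(\sqrt{\lambda^2+g^2(x)}\pm\sqrt{\lambda^2+g^2(\tau)}\bigr)^2$ entering $K_2$ is comparable, from above and below with absolute constants, to the corresponding quantity for $y=y_\lambda$ (recall also $K_2(x,\tau,y_\lambda)=K_2(x,\tau,y_0)$ by \eqref{miracle-1}).

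Granting these comparisons, the scaling analysis proceeds exactly as in the derivation of \eqref{dvushka}. For $x$ bounded away from $0$ the quotient in the statement is trivially $\lesssim 1$ (the integrand is bounded and $\log^+(x^2+\lambda^2)\gtrsim 1$), so the real issue is $x,\lambda\to 0$. After the rescaling $x=\lambda\widehat x$, $\tau=\lambda\widehat\tau$ — under which $g$ is replaced by $\widehat g(\widehat\tau)=\lambda^{-1}g(\lambda\widehat\tau)$ on $[0,\lambda^{-1}]$, enjoying the same three properties, and $K_2$ is scale invariant — one splits into the regimes $x<\lambda$, $\lambda<x<\delta$, $x>\delta$. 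In each case the integral is estimated as in \eqref{dvushka}: the dominant contribution comes from $\widehat\tau\gg 1$, where the $K_2$-integrand is $\sim 4\widehat x/\widehat\tau$ (each of the two logarithms in $K_2$ is $\sim 2\widehat x/\widehat\tau$, with constants independent of $g$), giving a contribution $\sim x\log(1/\lambda)$ when $x<\lambda$ and $\sim x\log(1/x)$ when $x>\lambda$, while the remaining pieces of the integral are $O(x)$. Therefore
\[
\int_0^1\Bigl(\sqrt{\lambda^2+g^2(\tau)}\Bigr)'K_2\Bigl(x,\tau,\sqrt{\lambda^2+g^2(\tau)}\Bigr)d\tau\sim x\log^+(x^2+\lambda^2),
\]
uniformly in $g$, $x\in(0,1]$, $\lambda\in(0,1]$, and dividing by $x\log^+(x^2+\lambda^2)$ yields the lemma.

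I expect the one delicate point — exactly as in the proof of the estimate on $A_2$ — to be the contribution of $\tau$ near $x$ in the sub-regime $x\ll\lambda$, where the (integrable) logarithmic singularity of $K_2$ at $\tau=x$ gets multiplied by the prefactor $\bigl(\sqrt{\lambda^2+g^2(\tau)}\bigr)'\lesssim x/\lambda$; there one uses that $\tfrac{x}{\lambda}\log^+x=t\log^+(\lambda t)\lesssim\log^+\lambda\sim\log^+(x^2+\lambda^2)$ for $t=x/\lambda\le 1$, so this piece stays within the asserted bound. A shorter route that avoids the regime analysis altogether is to apply the stability estimate of Lemma~\ref{bbb1} to reduce to the case $g(x)=x$ and then quote \eqref{dvushka} directly; either way the proof is a routine variant of arguments already made in the text.
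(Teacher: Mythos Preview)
Your proposal is correct and matches the paper's own (deferred) proof: the paper simply remarks that the lemma ``can be proved directly or by comparison to the case when $g=x$ if the stability estimates (see \eqref{staaa} below) are used,'' and you outline precisely these two routes. One small correction: for the comparison route the relevant stability estimate is not Lemma~\ref{bbb1} (that one concerns $K_1$) but the $K_2$--stability established in the treatment of $P_4$, referenced as \eqref{staaa}; this is a labeling slip, not a gap in the argument.
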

This result can be proved directly or by comparison to the case when $g=x$
if the stability estimates (see \eqref{staaa} below) are used.

\bigskip

{\bf 3. The kernel $M(x,\xi,\lambda)$ and the corresponding
operator}\bigskip

In this subsection, we will show that $M(x,\xi,\lambda)\to M(x,\xi,0)$ in a suitable
sense when $\lambda\to 0$. Recall that $\cal{M}_\lambda$ is the integral operator with the kernel $M(x,\xi,\lambda)$.
We have
\begin{lemma} \label{lemka-6}Fix any $\delta>0$. Then,
\[
\lim_{\lambda\to 0}\sup_{x>\delta} \int_0^1 |M(x,\xi,\lambda)-M(x,\xi,0)|d\xi=0
\]
and therefore
\[
\lim_{\lambda\to 0}\|\omega_\delta^c(x)({\cal M_\lambda-\cal M_0})\|_{L^\infty[0,1],L^\infty[0,1]}=
0
 \]
\end{lemma}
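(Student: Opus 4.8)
The plan is to prove the first (pointwise-in-$x$, integral-in-$\xi$) convergence statement on the region $x>\delta$, since the second assertion about the operator norm is a formal consequence: if $x>\delta$ then $\omega_\delta^c(x)=1-\omega_\delta(x)$ is supported in $\{x>\delta/2\}$, and $\|(\omega_\delta^c({\cal M}_\lambda-{\cal M}_0))f\|_{L^\infty[0,1]}\le \|f\|_{L^\infty}\cdot\sup_{x>\delta/2}\int_0^1|M(x,\xi,\lambda)-M(x,\xi,0)|\,d\xi$, so the operator bound follows with $\delta$ replaced by $\delta/2$ (and one simply re-indexes $\delta$). So everything reduces to the $L^1_\xi$-convergence of the kernels, uniformly for $x$ bounded away from zero.

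Recall $M(x,\xi,\lambda)=D_2(x,\xi,\lambda)+\int_\xi^1 D_1(x,\tau,\lambda)\,d\tau$, with $D_1,D_2$ given by the explicit formulas in the section on the operator $L_\lambda$. The key structural point is that, away from $x=0$, every ingredient in $D_1$ and $D_2$ is built from $y_\lambda(x)=\sqrt{\lambda^2+x^2}$, the factors $\xi/\sqrt{\lambda^2+\xi^2}$, $\lambda^2/(\lambda^2+\xi^2)^{3/2}$, the logarithmic prefactor $\log^+(x^2+\lambda^2)$, and the rational quantities
\[
\frac{y_\lambda(x)\pm y_\lambda(\xi)}{(x\pm\xi)^2+(y_\lambda(x)\pm y_\lambda(\xi))^2},\qquad K_2(x,\xi,y_\lambda).
\]
As $\lambda\to0$, pointwise in $(x,\xi)$ with $x>\delta$ one has $y_\lambda\to y_0$, $\xi/\sqrt{\lambda^2+\xi^2}\to 1$ for $\xi>0$, $\lambda^2/(\lambda^2+\xi^2)^{3/2}\to0$ for $\xi>0$, and $\log^+(x^2+\lambda^2)\to\log^+(x^2)\ne0$; hence $D_1(x,\xi,\lambda)\to D_1(x,\xi,0)$ and $D_2(x,\xi,\lambda)\to D_2(x,\xi,0)$ pointwise for $x>\delta$, $\xi\in(0,1)$. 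The plan is then: (i) establish this pointwise convergence; (ii) produce a $\lambda$-independent integrable majorant $g_\delta(\xi)\in L^1[0,1]$ with $|D_1(x,\xi,\lambda)|+|D_2(x,\xi,\lambda)|\le g_\delta(\xi)$ for all $x>\delta$ and all $\lambda\in(0,1]$; (iii) invoke dominated convergence (in $\tau$, inside $\int_\xi^1 D_1\,d\tau$, and then for $D_2$) to get $\int_0^1|M(x,\xi,\lambda)-M(x,\xi,0)|\,d\xi\to0$ for each fixed $x>\delta$; and (iv) upgrade to uniformity in $x>\delta$ using equicontinuity of $x\mapsto M(x,\cdot,\lambda)$ in $L^1_\xi$ on $[\delta,1]$ — equivalently, cover $[\delta,1]$ by finitely many points and use the uniform majorant together with the $\lambda$-uniform modulus of continuity in $x$ that the explicit formulas provide (each kernel is Lipschitz in $x$ on $[\delta,1]$ with constant depending only on $\delta$, since all denominators are bounded below by a power of $\delta$).

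The only subtlety — and the main obstacle — is the integrable majorant near the diagonal $\xi=x$ and, for the $\int_\xi^1 D_1\,d\tau$ piece, near $\tau=x$. There the term $\dfrac{y_\lambda(x)-y_\lambda(\tau)}{(x-\tau)^2+(y_\lambda(x)-y_\lambda(\tau))^2}$ behaves like $\dfrac{(x-\tau)r}{(x-\tau)^2(1+r^2)}\sim\dfrac{1}{x-\tau}$ (using $y_\lambda(x)-y_\lambda(\tau)=(x-\tau)r$ with $r$ bounded for $x,\tau$ near a point $>\delta$), so $D_1(x,\tau,\lambda)\sim \dfrac{C_\delta}{x-\tau}$, which is \emph{not} integrable in $\tau$ by itself. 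This is exactly why $M$ is formed as $D_2+\int_\xi^1 D_1\,d\tau$ rather than leaving $D_1$ as a free kernel: the principal-value-type cancellation is handled by the $\int_\xi^1$ antiderivative, which converts $1/(x-\tau)$ into a $\log|x-\tau|$, an $L^1_\xi$ (indeed bounded-a.e.) function on $[0,1]$, with a $\lambda$-uniform bound for $x>\delta$. Concretely I would split $\int_\xi^1 D_1(x,\tau,\lambda)\,d\tau$ into the genuinely singular part $\int_\xi^1\frac{2x\cdot(\text{stuff})}{x-\tau}\,d\tau$-type contributions and a remainder that \emph{is} dominated by a fixed $L^1[0,1]$ function; for the singular part one integrates explicitly (the primitive is $\sim x\log|x-\xi|+$ smooth), checks the resulting bound is uniform in $\lambda\in(0,1]$ for $x\in[\delta,1]$ using $y_\lambda\to y_0$ uniformly on $[\delta/2,1]$, and then the difference $M(x,\xi,\lambda)-M(x,\xi,0)$ itself has a majorant of the form $C_\delta(1+|\log|x-\xi||)\in L^1_\xi$ together with pointwise convergence to $0$. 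Once the uniform majorant and the $\lambda$-uniform $x$-equicontinuity on $[\delta,1]$ are in hand, a standard $\varepsilon/3$ argument (finite net in $x$, majorized convergence at each net point, equicontinuity to bridge) gives the claimed $\lim_{\lambda\to0}\sup_{x>\delta}\int_0^1|M(x,\xi,\lambda)-M(x,\xi,0)|\,d\xi=0$, and the operator-norm statement follows as noted above.
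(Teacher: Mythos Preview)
Your overall strategy---pointwise convergence of the kernels, a $\lambda$-uniform $L^1_\xi$ majorant, then equicontinuity in $x$ to upgrade to uniformity---is different from the paper's. The paper instead exploits the algebraic identities \eqref{miracle1}--\eqref{miracle-2}: by \eqref{miracle-1} one has $K_2(x,\xi,y_\lambda)=K_2(x,\xi,y_0)$ exactly, so the only $\lambda$-dependence in $D_2$ sits in scalar prefactors and the bound for $\int_0^1|D_2(x,\xi,\lambda)-D_2(x,\xi,0)|\,d\xi$ is a one-liner. For $D_1$ the paper uses \eqref{miracle1}--\eqref{miracle2} to rewrite it as $D_1^{(1)}+D_1^{(2)}+D_1^{(3)}$ (formula \eqref{de1}); the pieces $D_1^{(1)}$ and $D_1^{(3)}$ vanish identically at $\lambda=0$, and $D_1^{(2)}$ is compared directly to its explicit limit $4/((x^2+\xi^2)\log^+(x^2))$. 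All three estimates are already uniform over $x\in[\delta,1]$, so no equicontinuity step is needed.

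Your plan has a genuine gap in the treatment of $D_1$. You assert $D_1(x,\tau,\lambda)\sim C_\delta/(x-\tau)$ near the diagonal and then claim the antiderivative $\int_\xi^1 D_1\,d\tau$ converts this to a harmless $\log|x-\xi|$. Both steps fail. If $D_1$ truly had a nonzero $1/(x-\tau)$ pole, $\int_\xi^1 D_1\,d\tau$ would \emph{diverge} whenever $\xi<x<1$, since the integration crosses the singularity; the ``primitive is a logarithm'' heuristic only works when you stay on one side of the pole. In fact no such pole is present: the two occurrences of $\dfrac{y_\lambda(x)-y_\lambda(\tau)}{(x-\tau)^2+(y_\lambda(x)-y_\lambda(\tau))^2}$ in the raw formula for $D_1$ carry coefficients $\dfrac{2x\tau}{\sqrt{\lambda^2+\tau^2}}$ and $-\dfrac{2\tau^2\sqrt{\lambda^2+x^2}}{\tau^2+\lambda^2}$, whose sum is
\[
\frac{2\tau}{\tau^2+\lambda^2}\bigl(x\,y_\lambda(\tau)-\tau\,y_\lambda(x)\bigr)
=\frac{2\tau\,\lambda^2\,(x^2-\tau^2)}{(\tau^2+\lambda^2)\bigl(x\,y_\lambda(\tau)+\tau\,y_\lambda(x)\bigr)},
\]
which vanishes to first order at $\tau=x$. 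Thus the apparent pole cancels and $D_1$ is bounded near the diagonal (plus an integrable $\log|x-\tau|$ from the $K_2$ term), uniformly for $x>\delta$ and $\lambda\in(0,1]$. This cancellation is exactly what the $r_1$-rewrite in \eqref{de1} makes manifest: the denominators of $D_1^{(1)}$ and $D_1^{(2)}$ do not vanish at $\widehat\tau=\widehat x$. Once you replace your ``singular part'' argument by this observation, a dominated-convergence proof can be completed, though you should also revisit the claim that the kernel is Lipschitz in $x$ (pointwise it is not, because of the $\log|x-\xi|$ in $D_2$).
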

\begin{proof}  We start with
\[
\lim_{\lambda\to 0}\sup_{x>\delta} \int_0^1
|D_2(x,\xi,\lambda)-D_2(x,\xi,0)|d\xi=0
\]
By (\ref{miracle-1}),
\[
\int_0^1 |D_2(x,\xi,\lambda)-D_2(x,\xi,0)|d\xi<C(\delta)\int_0^1
\left(1-\frac{\xi}{\sqrt{\xi^2+\lambda^2}}\right)\log\left|\frac{x+\xi}{x-\xi}\right|d\xi
\]
and the last expression tends to zero uniformly in $x\in [\delta,1]$
when $\lambda\to 0$.\bigskip

 To handle $D_1$, we only need to show that
\begin{equation}
\lim_{\lambda\to 0}\sup_{x\in [\delta,1], \xi\in
[0,1]}\left|\int_\xi^1 D_1(x,\tau,\lambda)d\tau-\int_\xi^1
D_1(x,\tau,0)d\tau\right|=0
\end{equation}
To this end, we first simplify the expression for
$D_1(x,\tau,\lambda)$ using the formulas (\ref{miracle1}) and
(\ref{miracle2}).
\begin{equation}\label{de1}
D_1(x,\xi,\lambda)=D_1^{(1)}+D_1^{(2)}+D_1^{(3)}
\end{equation}
where (below $x=\lambda \widehat x$ and $\xi=\lambda\widehat\xi$)
\[
D_1^{(1)}(x,\xi,\lambda)=\frac{1}{x\sqrt{x^2+\lambda^2}\log^+(x^2+\lambda^2)}\cdot
\frac{4\widehat \xi \widehat x
r_1}{(1+r_1^2)(\widehat\xi\sqrt{1+\widehat x^2}+\widehat
x\sqrt{1+\widehat\xi^2})(1+\widehat\xi^2)}
\]
\[
D_1^{(2)}(x,\xi,\lambda)=\frac{1}{x\sqrt{x^2+\lambda^2}\log^+(x^2+\lambda^2)}\cdot
\left( \frac{\widehat
x\sqrt{1+\widehat\xi^2}+\widehat\xi\sqrt{1+\widehat
x^2}}{1+\widehat\xi^2}\right)\cdot \left( \frac{4\widehat
x\widehat\xi r_1}{(\widehat x+\widehat\xi)^2+(\widehat
x-\widehat\xi)^2r_1^2} \right)
\]
\[
D_1^{(3)}(x,\xi,\lambda)=-\frac{1}{x\log^+(x^2+\lambda^2)}\cdot
\frac{\lambda^2}{(\lambda^2+\xi^2)^{3/2}}\log\left(\frac{x+\xi}{x-\xi}\right)^2
\]

Since $D_1^{(3)}(x,\xi,0)=0$, we first show that
\[
\sup_{x>\delta}\int_0^1 |D_1^{(3)}(x,\xi,\lambda)|d\xi\to 0,
\quad\lambda\to 0
\]
To see that, first split the integral
\[
\int_0^1\frac{\lambda^2}{(\lambda^2+\xi^2)^{3/2}}\log\left(\frac{x+\xi}{x-\xi}\right)^2d\xi
=\int_0^{\delta/2}\frac{\lambda^2}{(\lambda^2+\xi^2)^{3/2}}\log\left(\frac{x+\xi}{x-\xi}\right)^2d\xi
\]
\[
+\int_{\delta/2}^1\frac{\lambda^2}{(\lambda^2+\xi^2)^{3/2}}\log\left(\frac{x+\xi}{x-\xi}\right)^2d\xi
\]
The second integral goes to zero as $\lambda\to 0$  uniformly in
$x>\delta$. The first one is bounded by
\[
C\int_0^{\delta/2}
\frac{\xi\lambda^2}{(\lambda^2+\xi^2)^{3/2}}d\xi\lesssim \lambda
\]
All constants involved are $\delta$ dependent.

Similarly, $D_1^{(1)}(x,\xi,0)=0$ and we have
\[
\sup_{x>\delta}\int_0^1 D_1^{(1)}(x,\xi,\lambda)d\xi\lesssim
\lambda+ \lambda\int_1^\infty \frac{\widehat x\widehat \xi
d\widehat\xi}{(\widehat x\widehat\xi)(1+\widehat\xi^2)}\lesssim
\lambda
\]

For $D_1^{(2)}(x,\xi,0)$, we have
\[
D_1^{(2)}(x,\xi,0)=\frac{1}{x^2\log^+(x^2)}\cdot
\frac{4x^2}{x^2+\xi^2}
\]
To show that
\[
\lim_{\lambda\to
0}\sup_{x>\delta,\xi>0}\int_{\xi}^1|D_2^{(2)}(x,\tau,\lambda)-D_2^{(2)}(x,\tau,0)|d\tau=0
\]
it is sufficient to prove
\[
\lim_{\lambda\to 0} \sup_{x>\delta} \lambda\int_0^{1/\lambda}
\left|\left( \frac{\widehat
x\sqrt{1+\widehat\xi^2}+\widehat\xi\sqrt{1+\widehat
x^2}}{1+\widehat\xi^2}\right)\cdot \left( \frac{4\widehat
x\widehat\xi r_1}{(\widehat x+\widehat\xi)^2+(\widehat
x-\widehat\xi)^2r_1^2} \right)-\frac{4\widehat x^2}{\widehat
x^2+\widehat\xi^2}\right|d\widehat\xi=0
\]
The integral over any interval $[0,T]$ is uniformly bounded. For
large $\widehat x$ and $\widehat \xi$, we substitute
\[
r_1=1+O\left(\frac{1}{\widehat x\widehat\xi}\right),
\sqrt{1+\widehat\xi^2}=\widehat\xi+O(\widehat\xi^{-1}),\sqrt{1+\widehat
x^2}=\widehat x+O(\widehat x^{-1})
\]
Collecting the errors produced by this substitution, we estimate this expression by
\[
\lambda\int_1^{1/\lambda} \frac{\widehat x^2}{\widehat
x^2+\widehat\xi^2}(\widehat\xi^{-2}+\widehat
x^{-2})d\widehat\xi\lesssim \lambda
\]
\end{proof}
The next step is to estimate
\[
\|\omega_\delta(x)\cal M_\lambda \|_{L^\infty[0,1], L^\infty[0,1]}
\]
where $\delta$ and $\lambda$ are small.
\begin{lemma}\label{lemka-7}
We have
\begin{equation}\label{predel-dva}
\lim_{\delta\to 0, \lambda\to 0 }\|\omega_\delta(x)\cal
M_\lambda \|_{L^\infty[0,1], L^\infty[0,1]}=0
\end{equation}
\end{lemma}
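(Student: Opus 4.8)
Since $\cal{M}_\lambda$ is an integral operator, $\|\omega_\delta(x)\cal{M}_\lambda\|_{L^\infty[0,1],L^\infty[0,1]}=\sup_{0<x\le\delta}\int_0^1|M(x,\xi,\lambda)|\,d\xi$, so the statement follows once one proves the uniform bound
\[
\int_0^1|M(x,\xi,\lambda)|\,d\xi\ \lesssim\ \frac{1}{\log^+(x^2+\lambda^2)},\qquad x\in(0,1],\ \lambda\in(0,1].
\]
Indeed, if $\delta,\lambda<\eta$ and $x\le\delta$ then $x^2+\lambda^2<2\eta^2$, hence $\log^+(x^2+\lambda^2)>\log(1/2\eta^2)$ and $\|\omega_\delta(x)\cal{M}_\lambda\|\lesssim1/\log(1/2\eta^2)\to0$. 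For $x$ bounded away from $0$ the bound is immediate from Lemma~\ref{lemka-6} and the boundedness of $\int_0^1|M(x,\xi,0)|\,d\xi$ there, so the real content is uniformity as $x\to0$.

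For this I would rescale $x=\lambda\widehat x$, $\xi=\lambda\widehat\xi$ and run the same split into $\widehat x\le1$ and $\widehat x>1$ used for the asymptotics of $A_1$, $A_2$; since every term of $M$ carries a prefactor comparable to $1/\log^+(x^2+\lambda^2)$, it is enough to show the surviving $\widehat\xi$-integrals are $O(1)$. A plain triangle inequality fails — each of $\int_0^1|D_2(x,\xi,\lambda)|\,d\xi$ and $\int_0^1\bigl|\int_\xi^1 D_1^{(1)}(x,\tau,\lambda)\,d\tau\bigr|\,d\xi$ is only of order $1$, not $o(1)$ — so cancellations, reflecting that at $\lambda=0$ the problem is linearized at the exact stationary state $y_0$, must be used before taking absolute values. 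One of them is exact and I would record it explicitly: writing $M=D_2+\int_\xi^1(D_1^{(1)}+D_1^{(2)}+D_1^{(3)})\,d\tau$ as in \eqref{de1}, using $K_2(x,\tau,y_\lambda)=K_2(x,\tau,y_0)$ from \eqref{miracle-1} and the identity $\dfrac{\lambda^2}{(\lambda^2+\tau^2)^{3/2}}=\dfrac{d}{d\tau}\dfrac{\tau}{\sqrt{\lambda^2+\tau^2}}$, one integration by parts makes the boundary term at $\tau=\xi$ cancel $D_2(x,\xi,\lambda)$ exactly, so that
\[
D_2(x,\xi,\lambda)+\int_\xi^1 D_1^{(3)}(x,\tau,\lambda)\,d\tau=\frac{1}{x\log^+(x^2+\lambda^2)}\left(\int_\xi^1\frac{\tau\,\partial_\tau K_2(x,\tau,y_0)}{\sqrt{\lambda^2+\tau^2}}\,d\tau-\frac{K_2(x,1,y_0)}{\sqrt{1+\lambda^2}}\right),
\]
the $\tau$-integral being a principal value at $\tau=x$; as $K_2(x,1,y_0)=\log(\tfrac{1+x}{1-x})^2\lesssim x$, the last term is $\lesssim1/\log^+(x^2+\lambda^2)$ and, being $\xi$-free, harmless.

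The rest of the argument would group the remaining pieces — the two terms above together with $\int_\xi^1(D_1^{(1)}+D_1^{(2)})(x,\tau,\lambda)\,d\tau$ — around the model kernel. At $\lambda=0$ the model is $M(x,\xi,0)=\tfrac{1}{x\log^+x^2}\bigl(-K_2(x,\xi,y_0)+4\arctan\tfrac1x-4\arctan\tfrac\xi x\bigr)$, and substituting $\xi=xt$ (as in the passage leading to \eqref{a10}) one sees that the leading logarithmic terms in the bracket cancel, leaving $O(x+t^{-3})$ and hence $\int_0^1|M(x,\xi,0)|\,d\xi\lesssim1/\log^+x^2\lesssim1/\log^+(x^2+\lambda^2)$. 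In the regime $\widehat x>1$ the $\lambda$-dependent pieces are driven back to this model exactly as \eqref{a10}--\eqref{treshka} drive the $K_1$-integral to its $\lambda=0$ value, the corrections $1-\tfrac{\tau}{\sqrt{\lambda^2+\tau^2}}\lesssim\min(1,\lambda^2/\tau^2)$, the $(1+\widehat\xi^2)^{-1}$ decay of $D_1^{(1)}$, and the analogous decay of $D_1^{(2)}(x,\cdot,\lambda)-D_1^{(2)}(x,\cdot,0)$ each producing an $O(1)$ $\widehat\xi$-integral; in the regime $\widehat x\le1$ the rescaled kernel converges as $\lambda\to0$ to the kernel attached to the hyperbola $\sqrt{1+\widehat x^2}$ (this is where the heuristic of Remark 2 enters), whose $\widehat\xi$-integral is again finite thanks to the cancellation at $\widehat\xi\to\infty$, while the prefactor $\approx 1/(2\log(1/\lambda))$ supplies the decay. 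Collecting the cases yields the uniform bound.

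The main obstacle is precisely this bookkeeping: near $x=0$ no individual summand of $M$ is small, so the cancellation pairing $D_2$ with the $D_1^{(3)}$-tail, the $\lambda=0$ cancellation inside $M(x,\xi,0)$, and its $\lambda>0$ counterpart among $D_1^{(1)}$, the $D_1^{(2)}$-difference and the weight $1-\tau/\sqrt{\lambda^2+\tau^2}$ must all hold simultaneously and uniformly in $(x,\lambda)$; only then does the factor $1/\log^+(x^2+\lambda^2)$ survive and let the cutoff $\omega_\delta$ turn it into decay. The one spot requiring care beyond the routine is the principal-value integral $\int_\xi^1\tau(\lambda^2+\tau^2)^{-1/2}\partial_\tau K_2(x,\tau,y_0)\,d\tau$, where $\partial_\tau K_2$ has a nonintegrable singularity at $\tau=x$ that must be treated symmetrically.
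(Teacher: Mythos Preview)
Your outline is correct and follows the same high-level strategy as the paper: reduce to the bound $\int_0^1|M(x,\xi,\lambda)|\,d\xi\lesssim 1/\log^+(x^2+\lambda^2)$, rescale $x=\lambda\widehat x$, split into $\widehat x\le 1$ and $\widehat x>1$, and exhibit the cancellation of leading asymptotic terms that prevents any single summand from being $o(1)$ on its own.

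The organizational difference is in the grouping. You pair $D_2$ with $\int_\xi^1 D_1^{(3)}$ via the integration-by-parts identity (which is correct and elegant), leaving a principal-value integral plus $\int_\xi^1(D_1^{(1)}+D_1^{(2)})$ to be combined around the model kernel. The paper instead shows directly that $\int_0^1\int_\xi^1|D_1^{(3)}|\,d\tau\,d\xi\to 0$ on its own, and then groups $D_2$ with $\int_\xi^1(D_1^{(1)}+D_1^{(2)})$; the cancellation of the $4\widehat x\sqrt{1+\widehat x^2}/\widehat\xi$ leading terms is exhibited explicitly after writing out the large-$\widehat\xi$ asymptotics of each piece. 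Your IBP effectively reproves the smallness of $D_1^{(3)}$ (since after splitting $\tau/\sqrt{\lambda^2+\tau^2}=1-(1-\tau/\sqrt{\lambda^2+\tau^2})$ the main part returns $D_2(x,\xi,0)$ and the correction is what carries the $\lambda^2/\tau^2$ decay), but at the cost of introducing the PV integral you flag at the end. The paper's grouping avoids that artifact: all integrands stay absolutely integrable throughout, and the matching of $\log|\tfrac{1+\widehat\xi}{1-\widehat\xi}|$ with the arctan-type tail from $D_1^{(2)}$ is done term-by-term in the asymptotic expansion rather than via an exact identity. Either route closes; the paper's is slightly cleaner bookkeeping, yours isolates one cancellation algebraically.
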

\begin{proof}

 We only need to show that
\begin{equation}\label{stupen1}
\lim_{\delta\to 0, \lambda\to 0}\sup_{x\in
[0,\delta]}\int_0^1 \left| D_2(x,\xi,\lambda)+\int_\xi^1
D_1(x,\tau,\lambda)d\tau\right|d\xi=0
\end{equation}
It is instructive to first do that calculation for $\lambda=0$. In this case,
\[
\frac{1}{x\log^+x} \int_0^1 \left|\left( 2\log\left|
\frac{x+\xi}{x-\xi}\right|-\int_\xi^1
\frac{4x}{x^2+\tau^2}d\tau\right)\right|d\xi
\]
\[
=\frac{2}{\log^+x}\int_0^{1/x} \left|\left( \log\left|
\frac{1+\xi}{1-\xi}\right|-\int_\xi^{1/x}
\frac{2}{1+\tau^2}d\tau\right)\right|d\xi
\]
We have
\[
\int_\xi^{1/x} \frac{2}{1+\tau^2}d\tau=\int_\xi^\infty
\frac{2}{1+\tau^2}d\tau+O(x)=\frac{2}{\xi}+O(\xi^{-2}+x), \quad
\xi\gg 1
\]
and
\[
\log\left| \frac{1+\xi}{1-\xi}\right|=\frac{2}{\xi}+O(\xi^{-2})
\]
This entails the necessary cancelation and a bound
\[
\frac{1}{x\log^+x} \int_0^1 \left|\left( 2\log\left|
\frac{x+\xi}{x-\xi}\right|-\int_\xi^1
\frac{4x}{x^2+\tau^2}d\tau\right)\right|d\xi\lesssim
\frac{1}{\log^+x}
\]
The logarithm in the denominator will give convergence to zero when
$x\to 0$.\smallskip

Now, we will need to prove analogous inequalities uniformly in small
$\lambda$. The expression
\[
\int_0^1 \left| D_2(x,\xi,\lambda)+\int_\xi^1
D_1(x,\tau,\lambda)d\tau\right|d\xi
\]
will be handled term by term.

We start by proving
\begin{equation}\label{ohoh1}
\lim_{\delta\to 0, \lambda\to 0}\sup_{x\in
[0,\delta]}\int_0^1 \int_\xi^1 |D_1^{(3)}(x,\tau,\lambda)|d\tau
d\xi=0
\end{equation}
The integral is bounded by
\[
\frac{1}{\widehat x \log^+(\lambda^2\widehat
x^2+\lambda^2)}\int_0^{1/\lambda}\int_{\widehat\xi}^{1/\lambda}\frac{1}{1+\widehat\tau^3}
\log \left|\frac{\widehat x+\widehat\tau}{\widehat
x-\widehat\tau}\right|d\widehat\tau d\widehat\xi
\]
\[
=\frac{1}{\widehat x \log^+(\lambda^2\widehat
x^2+\lambda^2)}\int_0^{1/\lambda}\frac{\widehat\tau}{1+\widehat\tau^3}\log
\left|\frac{\widehat x+\widehat\tau}{\widehat
x-\widehat\tau}\right|d\widehat\tau
\]
For the integral, an estimate holds
\[
\int_0^{1/\lambda}\frac{\widehat\tau}{1+\widehat\tau^3}\log
\left|\frac{\widehat x+\widehat\tau}{\widehat
x-\widehat\tau}\right|d\widehat\tau\lesssim \widehat
x+\int_{1/\widehat x}^{1/x} \widehat x^{-1}u^{-2}\log
\left|\frac{1+u}{1-u}\right|du
\]
The last integral is bounded by $C\widehat x$ for $\widehat x<1$.
For $\widehat x>1$, it is estimated by
$
\displaystyle C\frac{\log^+\widehat x}{\widehat x}
$.
Since
\[
\lim_{\lambda\to 0}\sup_{\widehat x\in (0,1)}\frac{\widehat
x}{\widehat x \log^+(\lambda^2\widehat x^2+\lambda^2)}\lesssim \lim_{\lambda\to 0} \frac{1}{\log^+\lambda}=0
\]
and
\[
\lim_{\lambda\to 0}\sup_{\widehat x>1}\frac{\log^+\widehat
x}{\widehat x^2\log^+(\lambda^2\widehat x^2+\lambda^2)}\lesssim \lim_{\lambda\to 0} \frac{1}{\log^+\lambda}=0
\]
we get (\ref{ohoh1}).\bigskip

Consider the other terms
\[
\int_0^1 \left| D_2(x,\xi,\lambda)+\int_\xi^1
\Bigl(D_1^{(1)}(x,\tau,\lambda)+D_1^{(2)}(x,\tau,\lambda)\Bigr)d\tau\right|d\xi
\]
\[
\lesssim \frac{1}{\widehat x\sqrt{\widehat
x^2+1}\log^+(\lambda^2\widehat x^2+\lambda^2)}\left(
\int_0^{1/\lambda} \left|\frac{\sqrt{\widehat
x^2+1}}{\sqrt{\widehat\xi^2+1}}\widehat\xi \log\left( \frac{\widehat
x+\widehat\xi}{\widehat x-\widehat\xi} \right)^2-\right.\right.
\]
\[
 -\int_{\widehat\xi}^{1/\lambda} \left(\frac{4\widehat\tau\widehat
x r_1}{(1+r_1^2)(1+\widehat\tau^2)(\widehat\tau\sqrt{1+\widehat
x^2}+\widehat x\sqrt{1+\widehat\tau^2})} \right.
\]
\[
\left.\left.\left. +\frac{\widehat
x\sqrt{1+\widehat\tau^2}+\widehat\tau\sqrt{1+\widehat
x^2}}{1+\widehat\tau^2}\cdot \frac{4\widehat x\widehat \tau
r_1}{(\widehat x+\widehat\tau)^2+(\widehat x-\widehat\tau)^2r_1^2}
\right)d\widehat\tau\right|\,d\widehat\xi\right)
\]

We consider two cases.

(1). Take $\widehat x\in (0,1]$. First, let
$\widehat\xi\in (0,1)$. We get
\[
 \int_0^{1} \left|\frac{\sqrt{\widehat
x^2+1}}{\sqrt{\widehat\xi^2+1}}\widehat\xi \log\left( \frac{\widehat
x+\widehat\xi}{\widehat x-\widehat\xi} \right)^2-\right.
\int_{\widehat\xi}^{1/\lambda} \left(\frac{4\widehat\tau\widehat
x r_1}{(1+r_1^2)(1+\widehat\tau^2)(\widehat\tau\sqrt{1+\widehat
x^2}+\widehat x\sqrt{1+\widehat\tau^2})} \right.
\]
\[
\left.\left. +\frac{\widehat
x\sqrt{1+\widehat\tau^2}+\widehat\tau\sqrt{1+\widehat
x^2}}{1+\widehat\tau^2}\cdot \frac{4\widehat x\widehat \tau
r_1}{(\widehat x+\widehat\tau)^2+(\widehat x-\widehat\tau)^2r_1^2}
\right)d\widehat\tau\right|\,d\widehat\xi
\]
\[
\lesssim \widehat x+\widehat x\int_0^1 d\widehat\xi
\int_{\widehat\xi}^{1/\lambda}\left(
\frac{\widehat\tau}{(1+\widehat\tau^2)(\widehat\tau+\widehat
x+\widehat\tau\widehat x)} +
\frac{\widehat\tau(\widehat\tau+\widehat x+\widehat\tau\widehat
x)}{(1+\widehat\tau^2)(\widehat x^2+\widehat
\tau^2)}\right)d\widehat\tau\lesssim \widehat x
\]
Thus, this gives $ O((\log^+\lambda)^{-1}) $ contribution when divided by $\widehat x\sqrt{\widehat x^2+1}\log^+(\lambda^2\widehat x^2+\lambda^2)$. If
$\widehat\xi>1$, we can use the asymptotical formulas
$r_1=1+O(\widehat\xi^{-1})$ and
$\sqrt{\widehat\xi^2+1}=\widehat\xi+O(\widehat\xi^{-1})$ to get
\[
 \int_1^{1/\lambda} \left|\frac{\sqrt{\widehat
x^2+1}}{\sqrt{\widehat\xi^2+1}}\widehat\xi \log\left( \frac{\widehat
x+\widehat\xi}{\widehat x-\widehat\xi} \right)^2-\right.
 \int_{\widehat\xi}^{1/\lambda} \left(\frac{4\widehat\tau\widehat
x r_1}{(1+r_1^2)(1+\widehat\tau^2)(\widehat\tau\sqrt{1+\widehat
x^2}+\widehat x\sqrt{1+\widehat\tau^2})} \right.
\]
\[
\left.\left. +\frac{\widehat
x\sqrt{1+\widehat\tau^2}+\widehat\tau\sqrt{1+\widehat
x^2}}{1+\widehat\tau^2}\cdot \frac{4\widehat x\widehat \tau
r_1}{(\widehat x+\widehat\tau)^2+(\widehat x-\widehat\tau)^2r_1^2}
\right)d\widehat\tau\right|\,d\widehat\xi
\]
\[
=
 \int_1^{1/\lambda} \left|
\frac{4\widehat x\sqrt{\widehat
x^2+1}}{\widehat\xi}(1+O(\widehat\xi^{-2}+\widehat x
\widehat\xi^{-1}))
 -\right.
\]
\[
 \left.-\int_{\widehat\xi}^{1/\lambda} \left(
 \frac{2\widehat x}{\widehat x+\sqrt{\widehat x^2+1}}+2\widehat x(\widehat
 x+\sqrt{1+\widehat
 x^2})\right)\widehat\tau^{-2}+\widehat xO(\widehat\tau^{-3})
 d\widehat\tau\right|d\widehat\xi\lesssim \widehat x
\]
Indeed,
\[
\frac{2\widehat x}{\widehat x+\sqrt{\widehat x^2+1}}+2\widehat
x(\widehat
 x+\sqrt{1+\widehat
 x^2})=4\widehat x\sqrt{1+\widehat x^2}
\]
and we have  cancelation of the main terms.

Summing up these estimates, we get
\begin{equation}\label{stupen2}
\int_0^1 \left| D_2(x,\xi,\lambda)+\int_\xi^1
\Bigl(D_1^{(1)}(x,\tau,\lambda)+D_1^{(2)}(x,\tau,\lambda)\Bigr)d\tau\right|d\xi\lesssim
\frac{1}{\log^+ \lambda}, \quad x\in (0,\lambda)
\end{equation}\bigskip
(2). Consider the case when $\widehat x>1$.
 First, take
$\widehat\xi\in (0,1)$. We get
\[
 \int_0^{1} \left|\frac{\sqrt{\widehat
x^2+1}}{\sqrt{\widehat\xi^2+1}}\widehat\xi \log\left( \frac{\widehat
x+\widehat\xi}{\widehat x-\widehat\xi} \right)^2d\widehat\xi\right|\lesssim 1
\]
and
\[
 \int_0^1\int_{\widehat\xi}^{1/\lambda} \left(\frac{2\widehat\tau\widehat
x r_1}{(1+r_1^2)(1+\widehat\tau^2)(\widehat\tau\sqrt{1+\widehat
x^2}+\widehat x\sqrt{1+\widehat\tau^2})} \right.
\]
\[
\left. +\frac{\widehat
x\sqrt{1+\widehat\tau^2}+\widehat\tau\sqrt{1+\widehat
x^2}}{1+\widehat\tau^2}\cdot \frac{4\widehat x\widehat \tau
r_1}{(\widehat x+\widehat\tau)^2+(\widehat x-\widehat\tau)^2r_1^2}
\right)d\widehat\tau\,d\widehat\xi \lesssim 1+\widehat x
\]
Thus, this gives the contribution bounded by
\[
\sup_{\widehat x>1} \frac{1}{\widehat x\log^+(\lambda^2\widehat
x^2+\lambda^2)}\lesssim \frac{1}{\log^+\lambda}
\]
For the interval $\widehat\xi\in (1,\lambda^{-1})$, we again use
asymptotics for $r_1$,
$\sqrt{\widehat x^2+1}$, and $\sqrt{\widehat\xi^2+1}$:
\[
 \int_1^{1/\lambda} \left|\sqrt{\widehat x^2+1}(1+O(\widehat\xi^{-2}))\log\left( \frac{\widehat
x+\widehat\xi}{\widehat x-\widehat\xi} \right)^2-\right.
\]
\[
 \left.-\int_{\widehat\xi}^{1/\lambda}
\frac{2\widehat x}{\widehat\tau}\left(
\frac{1}{\widehat\tau\sqrt{1+\widehat x^2}+\widehat
x\widehat\tau}+\frac{\widehat\tau\sqrt{1+\widehat x^2}+\widehat
x\widehat\tau}{\widehat x^2+\widehat\tau^2}
\right)(1+O(\widehat\tau^{-1}\widehat x^{-1}+\widehat\tau^{-2}) )
 d\widehat\tau\right|d\widehat\xi
\]
The errors produce the term bounded by
$
C(\log^+\widehat x+\widehat x)
$
and the change of variables in the integrals gives
\[
\int_{1/{\widehat x}}^{1/x} \left|\widehat x\sqrt{1+\widehat
x^2}\log\left(\frac{1+u}{1-u}\right)^2-\int_u^{1/x}2\widehat
x\tau^{-1}\left( \frac{1}{\tau(\sqrt{\widehat x^2+1}+\widehat
x)}+\frac{\tau(\sqrt{\widehat x^2+1}+\widehat
x)}{\tau^2+1}\right)d\tau\right|du
\]
First, notice that
\[
\widehat x\int_{1/{\widehat x}}^{1/x} \left|\int_{1/x}^\infty
2\tau^{-1}\left( \frac{1}{\tau(\sqrt{\widehat x^2+1}+\widehat
x)}+\frac{\tau(\sqrt{\widehat x^2+1}+\widehat
x)}{\tau^2+1}\right)d\tau\right|du\lesssim \widehat x^2
\]
Then,
\[
\int_{1/{\widehat x}}^1 \left|\widehat x\sqrt{1+\widehat
x^2}\log\left(\frac{1+u}{1-u}\right)^2-\widehat
x\int_u^{\infty}2\tau^{-1}\left( \frac{1}{\tau(\sqrt{\widehat
x^2+1}+\widehat x)}+\frac{\tau(\sqrt{\widehat x^2+1}+\widehat
x)}{\tau^2+1}\right)d\tau\right|du\lesssim
\]
\[
\lesssim \widehat x^2+{\log^+\widehat x}
\]
and
\[
\int\limits_{1}^{1/x} \left|\widehat x\sqrt{1+\widehat
x^2}\log\left(\frac{1+u}{1-u}\right)^2-\int\limits_u^{\infty}\frac{2\widehat x}
{\tau}\left( \frac{1}{\tau(\sqrt{\widehat x^2+1}+\widehat
x)}+\frac{\tau(\sqrt{\widehat x^2+1}+\widehat
x)}{\tau^2+1}\right)d\tau\right|du
\lesssim \widehat x^2
\]
after the cancelation of the main terms in the asymptotics.
Collecting these bounds, we get
\[
\int_0^1 \left| D_2(x,\xi,\lambda)+\int_\xi^1
\Bigl(D_1^{(1)}(x,\tau,\lambda)+D_1^{(2)}(x,\tau,\lambda)\Bigr)d\tau\right|d\xi\leq
\frac{\widehat x}{\sqrt{\widehat x^2+1}\log^+(x^2+\lambda^2)}
\]
which (together with \eqref{ohoh1} and \eqref{stupen2}) gives \eqref{stupen1} and finishes the proof.
\end{proof}
We immediately get the following
\begin{corollary}\label{th31}
\[\lim_{\lambda\to
0}\|\cal{M}_\lambda-\cal{M}_0\|_{L^\infty[0,1],L^\infty[0,1]}=0
\]
\end{corollary}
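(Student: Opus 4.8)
The plan is to deduce the corollary from Lemma~\ref{lemka-6} and Lemma~\ref{lemka-7} by a cut-off decomposition, with no further analytic work. Fix a small parameter $\delta>0$ and split, for $f\in L^\infty[0,1]$,
\[
(\cal{M}_\lambda-\cal{M}_0)f=\omega_\delta^c(x)(\cal{M}_\lambda-\cal{M}_0)f+\omega_\delta(x)(\cal{M}_\lambda-\cal{M}_0)f .
\]
For the localized-near-zero piece I would use the triangle inequality,
\[
\|\omega_\delta(x)(\cal{M}_\lambda-\cal{M}_0)\|_{L^\infty,L^\infty}\leq \|\omega_\delta(x)\cal{M}_\lambda\|_{L^\infty,L^\infty}+\|\omega_\delta(x)\cal{M}_0\|_{L^\infty,L^\infty}.
\]
The first term on the right is $<\varepsilon/4$ as soon as $\delta<\delta_0(\varepsilon)$ and $\lambda<\lambda_0(\varepsilon)$, by Lemma~\ref{lemka-7}. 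The second term is handled by the $\lambda=0$ version of the same estimate, which is exactly the ``instructive'' computation carried out at the beginning of the proof of Lemma~\ref{lemka-7}; it shows $\|\omega_\delta(x)\cal{M}_0\|_{L^\infty,L^\infty}\lesssim (\log^+\delta)^{-1}\to 0$, hence this term too is $<\varepsilon/4$ for $\delta$ small.

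Having fixed such a $\delta$, the remaining piece $\omega_\delta^c(x)(\cal{M}_\lambda-\cal{M}_0)$ is supported in $x\geq \delta/2$, and Lemma~\ref{lemka-6} (applied with $\delta/2$ in place of $\delta$) gives $\|\omega_{\delta}^c(x)(\cal{M}_\lambda-\cal{M}_0)\|_{L^\infty,L^\infty}\to 0$ as $\lambda\to 0$; in particular this is $<\varepsilon/2$ for $\lambda<\lambda_1(\varepsilon,\delta)$. Taking $\lambda<\min(\lambda_0,\lambda_1)$ yields $\|\cal{M}_\lambda-\cal{M}_0\|_{L^\infty,L^\infty}<\varepsilon$, and since $\varepsilon>0$ was arbitrary the corollary follows.

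The argument is entirely soft, so there is no genuine obstacle; the only point that deserves a moment's care is the order of the quantifiers. One must first invoke Lemma~\ref{lemka-7}, exploiting its uniformity in small $\lambda$, to pin down an admissible $\delta$, and only afterwards send $\lambda\to 0$ in the ``away from zero'' estimate of Lemma~\ref{lemka-6}, where $\delta$ is by then frozen. All the real estimates have already been absorbed into Lemmas~\ref{lemka-6} and \ref{lemka-7}.
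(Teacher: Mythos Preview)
Your proof is correct and follows the same approach as the paper, which simply says ``It is sufficient to apply lemma~\ref{lemka-6} and lemma~\ref{lemka-7}.'' You have spelled out the $\varepsilon$--$\delta$ details of combining the two lemmas via the cut-off $\omega_\delta+\omega_\delta^c$, including the small observation that the $\|\omega_\delta(x)\cal{M}_0\|$ term is covered by the explicit $\lambda=0$ computation inside the proof of Lemma~\ref{lemka-7}.
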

\begin{proof}
It is sufficient to apply lemma \ref{lemka-6} and lemma
\ref{lemka-7}.
\end{proof}

\bigskip

\subsection{ Inverting $L_\lambda$}

Divide the equation
\[
L_\lambda u=g
\]
by $A_1(x,\lambda)$ to rewrite it  as
\[
u'+pu+\int_0^1 M_2(x,\xi,\lambda)u'(\xi)d\xi=g_1
\]
where
\[
p(x,\lambda)=\frac{A_2(x,\lambda)}{A_1(x,\lambda}
\]
and
\[
M_2(x,\xi,\lambda)=\frac{M(x,\xi,\lambda)}{A_1(x,\lambda)}, \quad
g_1=\frac{g(x)}{A_1(x,\lambda)}
\]
Due to (\ref{a10}) and (\ref{a1-1}), this is a minor change as far
as inversion of $L_\lambda$ is concerned.

The equation
\[
u'+pu=F, \quad u(0)=0
\]
has the solution
\[
u=\int_0^x \exp\left(-\int_\xi^x p(t)dt \right) F(\xi)d\xi
\]
and therefore
\[
u'=F-p\int_0^x \exp\left(-\int_\xi^x p(t)dt \right) F(\xi)d\xi
\]
This is the same as
\begin{equation}\label{for2}
u'(x)=g_2(x)-\int_0^1 M_2(x,\xi,\lambda)u'(\xi)d\xi\hspace{4cm}
\end{equation}
\[
\hspace{4cm}+p(x)\int_0^x \exp \left(-\int_t^x
p(\tau)d\tau\right)\int_0^1 M_2(t,\xi,\lambda)u'(\xi)d\xi dt
\]
and
\[
g_2(x)=g_1(x)-p(x)\int_0^x \exp\left(-\int_\xi^x p(t)dt\right)
g_1(\xi)d\xi
\]
We can rewrite
\begin{equation}\label{for3}
u'+O_\lambda u'=B_\lambda g, \quad u'=(I+O_\lambda)^{-1} B_\lambda g
\end{equation}
provided that $I+O_\lambda$ is invertible. The expressions for $O_\lambda$ and $B_\lambda$ are as follows
\[
B_\lambda
g=g_2=\frac{g(x)}{A_1(x,\lambda)}-\frac{A_2(x,\lambda)}{A_1(x,\lambda)}\int_0^x
\exp\left( -\int_\xi^x
\frac{A_2(t,\lambda)}{A_1(t,\lambda)}dt\right)
\frac{g(\xi)}{A_1(\xi,\lambda)}d\xi
\]
and
\begin{equation}\label{ooo}
O_\lambda f=\frac{1}{A_1(x,\lambda)}(\cal{M_\lambda} f)(x)-
\end{equation}
\[
-\frac{A_2(x,\lambda)}{A_1(x,\lambda)}\int_0^x \exp\left(
-\int_\xi^x \frac{A_2(t,\lambda)}{A_1(t,\lambda)}dt\right)
\frac{(\cal{M}_\lambda f)(\xi)}{A_1(\xi,\lambda)}d\xi
\]

\begin{lemma}
We have
\begin{equation}\label{bee}
\|B_\lambda\|_{L^\infty[0,1],L^\infty[0,1]}\lesssim 1
\end{equation}
uniformly in $\lambda\in (0,\lambda_0)$.
\end{lemma}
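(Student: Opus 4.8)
The plan is to reduce the bound to the two-sided estimates $A_1(x,\lambda)\sim1$ and $A_2(x,\lambda)\sim x^{-1}$, uniform in $x\in(0,1]$ and $\lambda\in(0,\lambda_0)$, and then to control the Volterra integral in the definition of $B_\lambda$ by the elementary integrating-factor (Gr\"onwall) bound for a first-order ODE whose coefficient $p(x,\lambda)=A_2(x,\lambda)/A_1(x,\lambda)$ is comparable to $x^{-1}$. Recall
\[
B_\lambda g=\frac{g(x)}{A_1(x,\lambda)}-p(x,\lambda)\int_0^x\exp\!\Bigl(-\int_\xi^x p(t,\lambda)\,dt\Bigr)\frac{g(\xi)}{A_1(\xi,\lambda)}\,d\xi,
\]
so that the first summand is harmless once $A_1$ is bounded below, and the entire issue is the second summand.

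First I would record $A_1\sim1$. The function $A_1(\cdot,0)$ is smooth and strictly positive on $(0,1)$ (by \eqref{a10}, the integrand $\log\bigl((x+\xi)/(x-\xi)\bigr)^2$ being positive), equals $2$ at the origin, and is finite and nonzero at $x=1$; hence it is bounded and bounded away from zero on $[0,1]$. Together with the uniform convergence \eqref{a1-1}, this forces $A_1(x,\lambda)\sim1$ after a further shrinking of $\lambda_0$. (The upper bound alone is also the $g(x)=x$ case of Lemma~\ref{auxi}.) The bound $A_2(x,\lambda)\sim x^{-1}$ is exactly \eqref{a2-2}. Dividing, $p(x,\lambda)\sim x^{-1}$: there are absolute constants $0<c\le C$ with $c/t\le p(t,\lambda)\le C/t$ on $(0,1]\times(0,\lambda_0)$.

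With these in hand the estimate is immediate. The first term satisfies $\|g/A_1(\cdot,\lambda)\|_{L^\infty}\lesssim\|g\|_{L^\infty}$ since $A_1\gtrsim1$. For the second term, as $\xi\le x$ throughout the integral we have $\int_\xi^x p(t,\lambda)\,dt\ge c\log(x/\xi)$, hence $\exp\bigl(-\int_\xi^x p(t,\lambda)\,dt\bigr)\le(\xi/x)^{c}$; combining this with $1/A_1(\xi,\lambda)\lesssim1$ and $p(x,\lambda)\le C/x$ gives a bound by $\frac{C}{x}\int_0^x(\xi/x)^{c}\,d\xi\cdot\|g\|_{L^\infty}=\frac{C}{c+1}\,\|g\|_{L^\infty}$, a fixed multiple of $\|g\|_{L^\infty}$ independent of $x$ and $\lambda$. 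Summing the two contributions yields \eqref{bee}.

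The only delicate point is that the comparisons $A_1\sim1$, $A_2\sim x^{-1}$, and hence $p\sim x^{-1}$, must hold uniformly in $\lambda$ all the way down to $\lambda=0$, not merely for each fixed $\lambda$; but this uniformity is precisely what the preceding analysis of $A_1$ and $A_2$ (together with Lemma~\ref{auxi}) was designed to deliver, so no real obstacle remains and the argument is just the standard integrating-factor estimate.
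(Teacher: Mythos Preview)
Your proof is correct and follows essentially the same route as the paper: reduce to the uniform two-sided bounds $A_1\sim1$ and $A_2\sim x^{-1}$ (so $p\sim x^{-1}$), then control the Volterra term. The only difference is cosmetic: you use the lower bound $p\ge c/t$ to get $\exp(-\int_\xi^x p)\le(\xi/x)^c$ and then integrate, whereas the paper simply uses positivity of $p$ to bound the exponential by~$1$, obtaining $|B_\lambda g(x)|\lesssim\|g\|_\infty+\tfrac{1}{x}\int_0^x|g(\xi)|\,d\xi\lesssim\|g\|_\infty$ directly; your sharper exponential estimate is not needed here but does no harm.
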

\begin{proof}
 Since both $A_1$ and $A_2$ are positive, we have
\[
|B_\lambda g(x)|\leq C\left(\|g\|_{L^\infty[0,1]}+\frac{1}{x}\int_0^x
|g(\xi)|d\xi\right)
\]
uniformly in $\lambda\in (0,\lambda_0)$ and $x\in (0,1]$ as follows
from the analysis of $A_1$ and $A_2$. This gives \eqref{bee}.
\end{proof}

Consider $O_\lambda$. We have
\begin{lemma}\label{staba}
\[
\|O_\lambda-O_0\|_{L^\infty[0,1],L^\infty[0,1]}\to 0, \quad \lambda\to 0
\]
\end{lemma}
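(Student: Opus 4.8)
The plan is to deduce the convergence $\|O_\lambda - O_0\|\to 0$ from three ingredients already established: the uniform positivity and boundedness $A_1(x,\lambda)\sim 1$, $A_2(x,\lambda)\sim x^{-1}$ (Lemma on $A_1$ and Lemma on $A_2$), the convergence $A_{1(2)}(x,\lambda)\to A_{1(2)}(x,0)$ away from $0$ together with the small-$x$ asymptotics, and the operator convergence $\|\cal M_\lambda - \cal M_0\|\to 0$ from Corollary~\ref{th31}. Write $O_\lambda f = O_\lambda^{(1)} f - O_\lambda^{(2)} f$ where $O_\lambda^{(1)} f = A_1^{-1}(\cal M_\lambda f)$ and $O_\lambda^{(2)} f = p(x,\lambda)\int_0^x \exp(-\int_\xi^x p(t,\lambda)\,dt)\,A_1^{-1}(\xi,\lambda)(\cal M_\lambda f)(\xi)\,d\xi$, with $p = A_2/A_1$. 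For the first piece, $\|O_\lambda^{(1)} f - O_0^{(1)} f\|_{L^\infty}\le \|A_1(\cdot,\lambda)^{-1}\|_\infty\|(\cal M_\lambda - \cal M_0)f\|_\infty + \|(A_1(\cdot,\lambda)^{-1}-A_1(\cdot,0)^{-1})(\cal M_0 f)\|_\infty$; the first term $\to 0$ by Corollary~\ref{th31} and $A_1\sim 1$, the second by \eqref{a1-1} (which gives uniform convergence of $A_1(\cdot,\lambda)$ to $A_1(\cdot,0)$ on $[0,1]$) combined with $\|\cal M_0\|<\infty$.

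For the second piece $O_\lambda^{(2)}$, I would first observe that the integral operator $g\mapsto p(x,\lambda)\int_0^x \exp(-\int_\xi^x p(t,\lambda)\,dt)\,g(\xi)\,d\xi$ is uniformly bounded on $L^\infty[0,1]$ uniformly in $\lambda$: since $p(x,\lambda)\sim x^{-1}$ and $\int_\xi^x p(t,\lambda)\,dt \ge c\log(x/\xi)$, the kernel $p(x,\lambda)\exp(-\int_\xi^x p)$ is $\lesssim x^{-1}(\xi/x)^{c}$, integrable in $\xi$ over $(0,x)$ with bound $\lesssim 1$ — this is exactly the estimate underlying \eqref{bee}. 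Denote this operator $T_\lambda$; then $O_\lambda^{(2)} f = T_\lambda(A_1(\cdot,\lambda)^{-1}\cal M_\lambda f)$. Write the difference as $T_\lambda(A_1(\cdot,\lambda)^{-1}\cal M_\lambda f) - T_\lambda(A_1(\cdot,0)^{-1}\cal M_0 f) + (T_\lambda - T_0)(A_1(\cdot,0)^{-1}\cal M_0 f)$; the first part is controlled by $\|T_\lambda\|\le C$ together with the first-piece estimate already done, so it remains to prove $\|(T_\lambda - T_0)h\|_{L^\infty}\to 0$ for fixed $h\in L^\infty$ (here $h = A_1(\cdot,0)^{-1}\cal M_0 f$, but one gets a uniform statement over $\|h\|_\infty\le 1$).

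The main obstacle is precisely this convergence $T_\lambda\to T_0$: the kernels $p(x,\lambda)\exp(-\int_\xi^x p(t,\lambda)\,dt)$ are singular near the diagonal-at-$0$ corner $\{x=\xi=0\}$, so one cannot simply pass to the limit under the integral uniformly. I would split $\int_0^x = \int_0^{\eta x} + \int_{\eta x}^x$ for a small parameter $\eta$ (or split at a fixed small $\delta$). On $(\eta x, x)$ — equivalently on $x>\delta$ with $\xi\in(\delta/2,x)$ — the coefficient $p(\cdot,\lambda)$ converges uniformly to $p(\cdot,0)$ by \eqref{a2-1} and \eqref{a1-1}, so $\exp(-\int_\xi^x p(t,\lambda)\,dt)\to\exp(-\int_\xi^x p(t,0)\,dt)$ uniformly and the contribution of the difference $\to 0$. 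The tail $\int_0^{\eta x}$ (and the region $x<\delta$) is bounded, uniformly in $\lambda$, by $\sup_x \int_0^{\eta x} x^{-1}(\xi/x)^{c}\,d\xi \lesssim \eta^{c}$ using the uniform one-sided bound $p(t,\lambda)\ge c/t$ valid for both $T_\lambda$ and $T_0$; letting $\eta\to 0$ after $\lambda\to 0$ kills it. Combining the two regions gives $\|T_\lambda - T_0\|\to 0$, hence $\|O_\lambda^{(2)} - O_0^{(2)}\|\to 0$, and together with the first piece this proves $\|O_\lambda - O_0\|_{L^\infty[0,1],L^\infty[0,1]}\to 0$.
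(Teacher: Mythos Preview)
Your treatment of $O_\lambda^{(1)}$ is correct and matches the paper. The decomposition $O_\lambda^{(2)}f = T_\lambda(A_1^{-1}\mathcal{M}_\lambda f)$ and the telescoping $T_\lambda g_\lambda - T_0 g_0 = T_\lambda(g_\lambda - g_0) + (T_\lambda - T_0)g_0$ are also fine, and the first of these two terms is indeed handled by uniform boundedness of $T_\lambda$.

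The gap is in the claim $\|T_\lambda - T_0\|_{L^\infty,L^\infty}\to 0$. Your two splits are not equivalent: the tail bound $\int_0^{\eta x} x^{-1}(\xi/x)^c\,d\xi\lesssim\eta^{c+1}$ is uniform in $x$, but then the ``main'' region is $(\eta x,x)$, and for small $x$ this interval lies entirely near $0$, where you have \emph{no} convergence of $p(\cdot,\lambda)$ to $p(\cdot,0)$. The paper only establishes $A_2(\cdot,\lambda)\to A_2(\cdot,0)$ on $[\delta,1]$ for each fixed $\delta$ (see \eqref{a2-1}); the two-sided bound $A_2\sim x^{-1}$ does not give $x\,p(x,\lambda)\to x\,p(x,0)$ uniformly, and in fact the asymptotics \eqref{raz-a}--\eqref{raz-b} together with the $B_2$ estimates show $xA_2(x,\lambda)$ at $x\sim\lambda$ need not be close to $xA_2(x,0)\approx 2$. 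So for $h\equiv 1$ and $x\sim\lambda$, $(T_\lambda-T_0)h(x)$ is $O(1)$ with no smallness, and your argument does not close.

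The fix---and this is what the paper does---is not to aim for $T_\lambda\to T_0$ on all of $L^\infty$, but to exploit that the specific input $g_0=A_1(\cdot,0)^{-1}\mathcal{M}_0 f$ (and likewise $g_\lambda$) is small near $0$: Lemma~\ref{lemka-7} gives $\|\omega_\delta \mathcal{M}_\lambda\|\to 0$ as $\delta,\lambda\to 0$. Then for $x<\delta$ one bounds $|(T_\lambda-T_0)g_0(x)|\lesssim x^{-1}\int_0^x|g_0(\xi)|\,d\xi\lesssim\sup_{\xi<\delta}|g_0(\xi)|$, which is small; for $x>\delta$ one splits the $\xi$-integral at $\delta$, uses the same smallness of $g_0$ on $(0,\delta)$, and on $(\delta,x)$ invokes the genuine uniform convergence of $A_1,A_2,\mathcal{M}_\lambda$ away from $0$. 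Your argument becomes correct if you replace ``uniform over $\|h\|_\infty\le 1$'' by this use of \eqref{predel-dva}.
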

\begin{proof}

 For the first term,
\[
\left\|\frac{1}{A_1(x,\lambda)}\cal{M_\lambda}f-\frac{1}{A_1(x,0)}\cal{M}_0f\right\|_{L^\infty[0,1]}=o(1)\|f\|_{L^\infty[0,1]},
\]
and $o(1)\to 0$ when $\lambda\to 0$, uniformly in $f$. Indeed, this
 follows from the corollary \ref{th31} and the properties of
$A_1(x,\lambda)$.

The second term can be written as
\[
\omega_\delta(x) \cdot \frac{A_2(x,\lambda)}{A_1(x,\lambda)}\int_0^x
\exp\left( -\int_\xi^x
\frac{A_2(t,\lambda)}{A_1(t,\lambda)}dt\right)
\frac{(\cal{M}_\lambda f)(\xi)}{A_1(\xi,\lambda)}d\xi
\]
\[
+\omega^{c}_\delta(x)\cdot
\frac{A_2(x,\lambda)}{A_1(x,\lambda)}\int_0^x \exp\left( -\int_\xi^x
\frac{A_2(t,\lambda)}{A_1(t,\lambda)}dt\right)
\frac{(\cal{M}_\lambda f)(\xi)}{A_1(\xi,\lambda)}d\xi
\]
where $\delta>0$. If we denote the first/second expressions by
$S_{1(2)}$, then
\[
|S_1|\lesssim \frac{1}{x}\int_0^x
\chi_{\xi<\delta}\left|\frac{(\cal{M}_\lambda
f)(\xi)}{A_1(\xi,\lambda)}\right|d\xi
\]
and
\[
\lim_{\delta\to 0, \lambda\to 0}\|S_1\|_{L^\infty[0,1],L^\infty[0,1]}=0
\]
The last equality follows from \eqref{predel-dva}.

For $S_2$, one can write similarly
\[
S_2=\omega^{c}_\delta(x)\cdot
\frac{A_2(x,\lambda)}{A_1(x,\lambda)}\int_0^x \chi_{\xi<\delta}\cdot
\exp\left( -\int_\xi^x
\frac{A_2(t,\lambda)}{A_1(t,\lambda)}dt\right)
\frac{(\cal{M}_\lambda f)(\xi)}{A_1(\xi,\lambda)}d\xi
\]
\[
+\omega^{c}_\delta(x)\cdot
\frac{A_2(x,\lambda)}{A_1(x,\lambda)}\int_0^x\chi_{\xi>\delta}\cdot
\exp\left( -\int_\xi^x
\frac{A_2(t,\lambda)}{A_1(t,\lambda)}dt\right)
\frac{(\cal{M}_\lambda f)(\xi)}{A_1(\xi,\lambda)}d\xi
\]
The first expression can be handled in the same way. For the second,
we consider
\[
\left\|\omega_\delta^{c}(x)\cdot
\frac{A_2(x,\lambda)}{A_1(x,\lambda)}\int_0^x\chi_{\xi>\delta}\cdot
\exp\left( -\int_\xi^x
\frac{A_2(t,\lambda)}{A_1(t,\lambda)}dt\right)
\frac{(\cal{M}_\lambda f)(\xi)}{A_1(\xi,\lambda)}d\xi\right.
\]
\[
-\left.\omega_\delta^{c}(x)\cdot
\frac{A_2(x,0)}{A_1(x,0)}\int_0^x\chi_{\xi>\delta}\cdot \exp\left(
-\int_\xi^x \frac{A_2(t,0)}{A_1(t,0)}dt\right) \frac{(\cal{M}_0
f)(\xi)}{A_1(\xi,0)}d\xi\right\|_{L^\infty[0,1]}
\]
If $\delta>0$ is fixed, this expression is bounded by
$o(1)\|f\|_{L^\infty[0,1]}$ as $\lambda\to 0$ (with constant depending on
$\delta$). That follows directly from the properties of $A_{1(2)}$
and $\cal{M}_\lambda$.  Combining the obtained estimates we get the statement of the lemma.\end{proof}

By the standard argument of the perturbation theory, this lemma implies that inversion of  $I+O_\lambda$ can be reduced
to showing that $I+O_0$ is invertible. In the next section, we will
check that.\bigskip

\subsubsection{The operator $O_0$ and its properties}\bigskip

\begin{theorem}\label{oyoy}
The operator $I+O_0$ is invertible in $L^\infty[0,1]$.
\end{theorem}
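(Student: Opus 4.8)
The plan is to establish invertibility through the Fredholm alternative. I would split $O_0$ into a piece concentrated near $x=0$ that has small operator norm and a piece that is compact on $L^\infty[0,1]$; this makes $I+O_0$ a compact perturbation of an invertible operator, hence Fredholm of index zero, so it is invertible if and only if it is injective. Injectivity I would then get from an energy identity for the linearization of \eqref{ur1} at the corner state $y_0(x)=|x|$. The routine part is the splitting; the genuine obstacle is the injectivity step, where one must linearize at the singular state, recognize a cancellation that turns the leading term into a nonnegative quadratic form, and verify that several constants line up exactly.

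For the splitting, fix small $\delta>0$ and write $I+O_0=(I+\omega_\delta O_0)+\omega_\delta^{c}O_0$. Using Lemma~\ref{lemka-7} (through \eqref{predel-dva}), the boundedness below of $A_1(\cdot,0)$, and the $S_1$-estimate in the proof of Lemma~\ref{staba}, one gets $\|\omega_\delta O_0\|_{L^\infty[0,1],L^\infty[0,1]}\to0$ as $\delta\to0$; fixing $\delta$ with this norm $<1$ makes $I+\omega_\delta O_0$ invertible via a Neumann series. For $\omega_\delta^{c}O_0$ I use formula \eqref{ooo}: on the support of $\omega_\delta^{c}$ we have $x\ge\delta/2$, where $A_1,A_2$ are smooth, bounded above and below, $A_2/A_1\sim x^{-1}$, and $M(x,\xi,0)$ is dominated by $C_\delta(1+|\log|x-\xi||)$. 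Hence $f\mapsto\omega_\delta^{c}(x)A_1(x)^{-1}(\mathcal M_0 f)(x)$ sends the unit ball of $L^\infty[0,1]$ to a bounded equicontinuous family and so is compact, while the Volterra term of \eqref{ooo} is the integral operator with bounded continuous kernel
\[
N(x,\xi)=\omega_\delta^{c}(x)\frac{A_2(x)}{A_1(x)}\,\mathbf 1_{\xi<x}\,\exp\Big(-\int_\xi^x\frac{A_2(t)}{A_1(t)}\,dt\Big)\frac{1}{A_1(\xi)},\qquad N(x,0)=0,
\]
composed with the bounded operator $\mathcal M_0$, hence compact as well. Therefore $I+O_0=(I+\omega_\delta O_0)\big(I+(I+\omega_\delta O_0)^{-1}\omega_\delta^{c}O_0\big)$ is Fredholm of index zero, and it remains to prove injectivity.

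Suppose $(I+O_0)v=0$ with $v\in L^\infty[0,1]$ and set $u(x)=\int_0^x v$, so that $u\in\dot Lip[0,1]$, $u(0)=0$, $|u(x)|\lesssim x$; reversing the reduction that produced \eqref{for3} gives $L_0u=0$. Multiplying by $x\log^+(x^2)$ and inserting the $\lambda=0$ forms of $A_1,A_2,M$, after one integration by parts this is exactly the linearization of \eqref{ur1} at $y_0=|x|$:
\[
\int_0^1\big(u'(x)-u'(\xi)\big)\log\Big(\frac{x+\xi}{x-\xi}\Big)^2 d\xi\;+\;2u(x)\log(1+x^{-2})\;+\;4x\int_0^1\frac{u(\xi)}{x^2+\xi^2}\,d\xi\;=\;0,\qquad x\in(0,1].
\]
The structural point that makes the argument work is that at the corner state $K_1(x,\xi,y_0)=K_2(x,\xi,y_0)=\log\big(\tfrac{x+\xi}{x-\xi}\big)^2\ge0$, so the principal term is a nonnegative quadratic form in $u'$ after symmetrization, while $\delta K_1-\delta K_2=4(\xi u(x)+xu(\xi))/(x^2+\xi^2)$ generates the two lower-order terms.

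Testing the displayed identity against $u'(x)$ and integrating over $[0,1]$: symmetrizing the double integral yields $Q:=\tfrac12\int_0^1\int_0^1(u'(x)-u'(\xi))^2\log(\tfrac{x+\xi}{x-\xi})^2\,d\xi\,dx\ge0$; integrating $2u'u\log(1+x^{-2})$ by parts yields $u(1)^2\log2+\int_0^1\frac{2u^2}{x(x^2+1)}\,dx$ (the endpoint at $0$ vanishes since $u=O(x)$); and writing $u(\xi)=\int_0^\xi u'$, symmetrizing, using $\arctan(s/x)+\arctan(x/s)=\pi/2$ and one more integration by parts, the last term contributes $4u(1)\int_0^1\frac{u(x)}{x^2+1}\,dx$. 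So, with $c=u(1)$, $E=\int_0^1\frac{u^2}{x(x^2+1)}\,dx$, $R=\int_0^1\frac{u}{x^2+1}\,dx$, we get the exact identity $0=Q+c^2\log2+2E+4cR$. Since $\frac{u}{x^2+1}=\frac{u}{\sqrt{x(x^2+1)}}\cdot\frac{\sqrt x}{\sqrt{x^2+1}}$ and $\int_0^1\frac{x}{x^2+1}\,dx=\tfrac12\log2$, Cauchy--Schwarz gives $|R|\le E^{1/2}\sqrt{\tfrac12\log2}$, so $|4cR|\le2\sqrt{2\log2}\,|c|\,E^{1/2}\le(\log2)c^2+2E$ by AM--GM; substituting this back forces $Q\le0$, hence $Q=0$, hence $u'$ is a.e.\ constant and $u=ax$; plugging $u=ax$ into the displayed identity gives $4ax\log(1+x^{-2})\equiv0$, so $a=0$ and $v=u'\equiv0$. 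Thus $I+O_0$ is injective, and, being Fredholm of index zero, invertible. The delicate point throughout is that $\int_0^1\frac{x}{x^2+1}\,dx=\tfrac12\log2$ is precisely the constant for which Cauchy--Schwarz together with AM--GM absorbs the indefinite cross term $4cR$ into $(\log2)c^2+2E$, leaving only $Q\le0$.
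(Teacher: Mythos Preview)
Your proposal is correct. The Fredholm half is essentially the paper's Lemma~\ref{lemka-c} in slightly different packaging: the paper shows directly that $G_2=A_1^{-1}\mathcal M_0$ is compact (via the same near-$0$/away-from-$0$ split you use) and then writes $O_0=G_2-G_3G_2$ with $G_3$ bounded, concluding $O_0$ is compact. One minor slip: your Volterra kernel $N(x,\xi)$ is not continuous across the diagonal $\xi=x$, but this does not matter since the factor $\mathcal M_0$ it is composed with is itself compact.

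The injectivity step is where the two arguments genuinely differ. The paper multiplies the linearized identity by $u$ (not $u'$), integrates, and obtains four pieces $\mathfrak I_1,\dots,\mathfrak I_4$. After one regrouping, $\mathfrak I_1\ge0$ is a boundary term, the combination of $\mathfrak I_3$ with the first half of $\mathfrak I_2$ is $\ge0$ by the elementary inequality $x+x^{-1}\ge x+1$, and the remaining bilinear terms are shown to be $\ge0$ using the positive definiteness of the Hilbert kernel $(x+\xi)^{-1}$ together with Schur's theorem on Hadamard products (applied to $\frac{x\xi}{x^2+\xi^2}\cdot\frac{1}{x+\xi}$). This forces $u\equiv0$ directly, without first reducing to $u'=\mathrm{const}$.

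Your route---test with $u'$, symmetrize the principal term into $Q\ge0$, and absorb the indefinite cross term $4cR$ into $(\log 2)c^2+2E$ via Cauchy--Schwarz and AM--GM---is more elementary: it avoids both the Hilbert-matrix positivity and Schur's theorem. The price is that it hinges on the exact numerical coincidence $\int_0^1\frac{x}{x^2+1}\,dx=\tfrac12\log 2$, which makes the inequality sharp; the paper's argument is more structural and would transfer more readily to other interaction kernels $H$. Both arguments then need the final check that $u=ax$ forces $a=0$, which you supply.
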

\begin{proof}
For the case $\lambda=0$, the formulas are very simple. We recall
that
\[
K_1(x,\xi,y_0)=K_2(x,\xi,y_0)=\log\left(\frac{x+\xi}{x-\xi}\right)^2
\]
Then,  \eqref{a10} and \eqref{a2-1} imply that
\[
A_1(x,0)=\frac{1}{\log^+x^2}\int_0^{1/x} \log\left(
\frac{1+\xi}{1-\xi} \right)^2d\xi=2+o(1), \quad x\to 0
\]
and
\[
A_2(x,0)=\frac{2}{x\log^+x^2}\log(x^{-2}+1)=\frac{2}{x}+o(x), \quad
x\to 0
\]
Thus,
\[
p(x,0)=\frac{A_2(x,0)}{A_1(x,0)}= \frac{1}{x}+o(x), \quad x\to 0
\]
Then,
\[
D_2(x,\xi)=-\frac{1}{x\log^+x^2}\log\left(\frac{x+\xi}{x-\xi}\right)^2
\]
and
\[
D_1(x,\xi)=\frac{4}{(x^2+\xi^2)\log^+ x^2}
\]
Therefore,
\[
M(x,\xi,0)=\frac{1}{x\log^+x^2}\left( -\log\left(
\frac{x+\xi}{x-\xi}
\right)^2+\int_\xi^1\frac{4x}{x^2+\tau^2}d\tau\right)
\]
and
\[
M_2(x,\xi,0)=\frac{M(x,\xi,0)}{A_1(x,0)}
\]
where $A_1\sim 1$ on all of $[0,1]$.

\begin{lemma}\label{lemka-c}
The operator
\[
G_2f=\int_0^1 M_2(x,\xi,0)f(\xi)d\xi
\]
is compact in $L^\infty[0,1]$.
\end{lemma}
\begin{proof} First, notice that
\begin{equation}\label{vova1}
|G_2f|\lesssim \frac{\|f\|_{\infty}}{\log^+ x}\int_0^{1/x} \left|
\log\left(     \frac{1+\xi}{1-\xi}
\right)^2-4\int_{\xi}^{1/x}\frac{d\tau}{\tau^2+1}
\right|d\xi\lesssim \frac{\|f\|_\infty}{\log^+ x}
\end{equation}
and thus $G_2$ is bounded in $L^\infty[0,1]$.

 The compactness now follows by the standard approximation
argument. Let us write a partition of unity
$1=\phi_\delta+\phi^c_\delta$. Then, \eqref{vova1} yields $\|\phi_\delta G_2\|_{L^\infty[0,1],L^\infty[0,1]}\to 0$ as
$\delta\to 0$. Then, for fixed $\delta>0$, $\phi^c_\delta G_2$ is compact since the kernel
has a weak singularity on the diagonal and is smooth away from it. Since the space of compact operators is closed in the operator topology, we have the statement of the lemma.
\end{proof}
For $O_0$, one gets
\begin{equation}\label{ooo1}
O_0 f=\frac{1}{A_1(x,0)}\cal{M}_0 f
-\frac{A_2(x,0)}{A_1(x,0)}\int_0^x \exp\left( -\int_\xi^x
\frac{A_2(t,0)}{A_1(t,0)}dt\right)\frac{(\cal{M}_0
f)(\xi)}{A_1(\xi,0)} d\xi
\end{equation}
\[
=(G_2 f)(x)-
\frac{A_2(x,0)}{A_1(x,0)}\int_0^x \exp\left( -\int_\xi^x
\frac{A_2(t,0)}{A_1(t,0)}dt\right)(G_2f)(\xi) d\xi
\]
Since the operator $G_3$ defined by
\[
G_3f=\frac{A_2(x,0)}{A_1(x,0)}\int_0^x \exp\left( -\int_\xi^x
\frac{A_2(t,0)}{A_1(t,0)}dt\right)f(\xi)d\xi
\]
is bounded in $L^\infty[0,1]$, we get the compactness for $O_0$ in view of lemma \ref{lemka-c}. Therefore,
the Fredholm theory is applicable to $I+O_0$. In particular, to
prove invertibility of $I+O_0$, we only need to check that its
kernel is trivial.

Consider the equation
\[
(I+O_0)f=0
\]
and suppose that $f\in L^\infty[0,1]$. Recall \eqref{for3}. The equation
\begin{equation}\label{prosti}
L_0u=0,\quad u\in \dot Lip [0,1]
\end{equation}
is equivalent to
\[
(I+O_0)u'=0, \quad u(x)=\int_0^x f(t)dt
\]
Thus, we only need to check that $L_0$
has zero kernel in $\dot Lip[0,1]$.

The equation \eqref{prosti} is equivalent to
\[
\int_0^1 (u'(x)-u'(\xi))K_1(x,\xi,y_0)d\xi+8\int_0^1 H'(2x^2+2\xi^2)
(\xi u(x)+xu(\xi))d\xi=0, \quad u\in \dot Lip[0,1]
\]
where
\[
K_1(x,\xi,y_0)=H(2(x+\xi)^2)-H(2(x-\xi)^2)= \log\left(
\frac{x+\xi}{x-\xi}\right)^2
\]
since
$
H(x)=\log x
$
in that case. Multiply the
both sides by $u$ and integrate over $[0,1]$. For the general $H$,
we have
\[
0.5 \int_0^1 (u(1)-u(\xi))^2 \Bigl(H(2(1+\xi)^2)-H(2(1-\xi)^2)
\Bigr)d\xi
\]
\[
-2\int_0^1\int_0^1 (u(x)-u(\xi))^2 \Bigl(
H'(2(x+\xi)^2)(x+\xi)-H'(2(x-\xi)^2)(x-\xi)\Bigr)\Bigr)dxd\xi
\]
\[
+8\int_0^1u^2(x)\int_0^1 \xi H'(2x^2+2\xi^2)d\xi
dx+8\int_0^1\int_0^1 u(x)u(\xi)xH'(2\xi^2+2x^2)dxd\xi
\]
\[
=\mathfrak{I}_1+\ldots+\mathfrak{I}_4
\]
\smallskip
Let us study this expression term by term.

If $u_1(x)=u(1)-u(x)$, then
\[
\mathfrak{I}_1=\int_0^1 u_1^2(x)\log \left| \frac{1+x}{1-x}\right|dx\geq 0
\]
This is actually true for generic $H$ that are monotonically
increasing.\bigskip

Using the symmetrization of the
integrals, we get the following expressions
\[
\mathfrak{I}_2=-\int_0^1\int_0^1
\frac{(u(x)-u(\xi))^2}{x+\xi}dxd\xi=-2\int_0^1u^2(x)\log
\left(\frac{1+x}{x}\right)dx
\]
\[
+2\int_0^1\int_0^1 \frac{u(x)u(\xi)}{x+\xi}dxd\xi
\]
\[
\mathfrak{I}_3=2\int_0^1 u^2(x)\log(1+x^{-2})dx
\]
\[
\mathfrak{I}_4=4\int_0^1\int_0^1
u(x)u(\xi)\frac{x}{x^2+\xi^2}dxd\xi=2\int_0^1\int_0^1
u(x)u(\xi)\frac{x+\xi}{x^2+\xi^2}dxd\xi
\]
Notice now that the sum of the first term in $\mathfrak{I}_2$ and $\mathfrak{I}_3$ is
\[
2\int_0^1 u^2(x)\log\left(\frac{x+x^{-1}}{1+x}\right)dx\geq 0
\]
because
$
x+x^{-1}\geq x+1
$
if $x\in (0,1]$.

In the calculations that follow, the condition $u(x)=O(x), \, x\to
0$ will ensure the convergence of all integrals involved. Since the Hilbert
matrix is nonnegative (\cite{nik}, proof of the theorem 5.3.1.), the integral
\[
{\cal G}(u)=\int_0^1 \frac{u(\xi)}{x+\xi}d\xi
\]
defines a  positive definite operator in $L^2(0,1)$. Thus,
\[
{\cal G}_1(u)=\int_0^1 \frac{u(\xi)}{x^2+\xi^2}d\xi
\]
is positive definite as well, as the change of variables in the
quadratic form shows. Also,
\[
\frac{x+\xi}{x^2+\xi^2}=\frac{1}{x+\xi}+\frac{2x\xi}{(x^2+\xi^2)(x+\xi)}
\]
So, we only need to establish that
\[
{\cal G}_2u=\int_0^1 \frac{x\xi u(\xi)}{(x^2+\xi^2)(x+\xi)}d\xi
\]
is positive definite. That, however, is the corollary of the Schur's
theorem for the Hadamard product of the positive definite matrices (\cite{nik}, p.319), written
for the integral operators (e.g., by the Riemann sum approximation). Indeed,
it is sufficient to notice that
\[
\frac{x\xi}{x^2+\xi^2}
\]
is a positive definite kernel (again, by the change of variables in
the quadratic form).
\end{proof}
Summing up the results of this section, we obtain
(\ref{aaa1}).\bigskip

\section{$\|\psi_0\|_{\dot Lip[0,1]}$ is small.}

In this section, we will prove \eqref{aaa4}, the smallness of
initial data for the contraction mapping.
\begin{lemma}
We have
\begin{equation}\label{why-small}
\|\psi_0\|_{\dot Lip[0,1]}=o(1), \quad \lambda\to 0
\end{equation}
\end{lemma}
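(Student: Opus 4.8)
The plan is to reduce \eqref{why-small} to the smallness of $\|F(x,\lambda)\|_{L^\infty[0,1]}$, and then to read the latter off from the numerator asymptotics already established. For the reduction, recall from \eqref{for3} that, once $I+O_\lambda$ is invertible, solving $L_\lambda u=g$ in $\dot Lip[0,1]$ is the same as $u'=(I+O_\lambda)^{-1}B_\lambda g$. Taking $g=-F(x,\lambda)$ and using $\psi_0\in\dot Lip[0,1]$ (so $\psi_0(0)=0$), one gets
\[
\|\psi_0\|_{\dot Lip[0,1]}=\|\psi_0'\|_{L^\infty[0,1]}\le\|(I+O_\lambda)^{-1}\|\,\|B_\lambda\|\,\|F(x,\lambda)\|_{L^\infty[0,1]}.
\]
Here $\|B_\lambda\|\lesssim 1$ by \eqref{bee}, while Lemma \ref{staba}, Theorem \ref{oyoy}, and a Neumann-series argument give $\|(I+O_\lambda)^{-1}\|\lesssim 1$ for $\lambda\ll 1$. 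So it suffices to show $\|F(x,\lambda)\|_{L^\infty[0,1]}\to 0$.

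To estimate $F$, note that at $f=x$ its numerator is $N(x,\lambda):=x\int_0^1K_1(x,\tau,y_\lambda)\,d\tau-\sqrt{x^2+\lambda^2}\int_0^1y_\lambda'(\tau)K_2(x,\tau,y_\lambda)\,d\tau$. On $(0,\delta]$, $\delta\ll 1$, the bounds \eqref{odnushka} and \eqref{treshka} give $\int_0^1K_1(x,\tau,y_\lambda)\,d\tau=2\sqrt{x^2+\lambda^2}\log^+(x^2+\lambda^2)+O(x+\lambda)$, and \eqref{dvushka} gives $\int_0^1y_\lambda'(\tau)K_2(x,\tau,y_\lambda)\,d\tau=2x\log^+(x^2+\lambda^2)+O(x)$; the two main terms cancel in $N$, leaving $N(x,\lambda)=O\!\left(x\sqrt{x^2+\lambda^2}\right)$ with an absolute constant (using $x+\lambda\lesssim\sqrt{x^2+\lambda^2}$). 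Dividing by the denominator $x\sqrt{x^2+\lambda^2}\log^+(x^2+\lambda^2)$ yields $|F(x,\lambda)|\lesssim(\log^+(x^2+\lambda^2))^{-1}$ on $(0,\delta]$, which for $\lambda<\delta$ is $\lesssim(\log^+(2\delta^2))^{-1}$.

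It remains to glue the two regimes. Given $\varepsilon>0$, first choose $\delta$ so small that the last bound is $<\varepsilon/2$ on $(0,\delta]$. For this fixed $\delta$, the uniform limits \eqref{semech} and \eqref{shod1} (together with $\sqrt{x^2+\lambda^2}\to x$ and $\log^+(x^2+\lambda^2)\to\log^+x^2$, which is bounded below on $[\delta,1]$) show $F(x,\lambda)\to F(x,0)$ uniformly on $[\delta,1]$; since $F(x,0)=0$, we get $\|F(x,\lambda)\|_{L^\infty[\delta,1]}<\varepsilon/2$ for $\lambda<\lambda(\varepsilon)$. Combining, $\|F(x,\lambda)\|_{L^\infty[0,1]}<\varepsilon$ for all small $\lambda$, and \eqref{why-small} follows from the first step.

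The only step that is not bookkeeping is the cancellation in $N$: one must verify that the logarithmically large main terms of $\int_0^1K_1(x,\tau,y_\lambda)\,d\tau$ and of $(\sqrt{x^2+\lambda^2}/x)\int_0^1y_\lambda'(\tau)K_2(x,\tau,y_\lambda)\,d\tau$ agree up to $O(1)\cdot x$ — i.e. that $y_\lambda=\sqrt{x^2+\lambda^2}$ solves \eqref{ur1} with residual $O(x)$. This is the quantitative form of the heuristic in Remark 2 identifying the hyperbola as the limiting profile, and it is already contained in \eqref{odnushka}, \eqref{treshka}, \eqref{dvushka}; the rest is routine.
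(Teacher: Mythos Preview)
Your proposal is correct and follows essentially the same route as the paper: reduce \eqref{why-small} to $\|F(x,\lambda)\|_{L^\infty[0,1]}=o(1)$ via the bound \eqref{aaa1} on $L_\lambda^{-1}$, then split into $x\in(0,\delta]$ (where the asymptotics \eqref{odnushka}, \eqref{treshka}, \eqref{dvushka} give the cancellation and $|F|\lesssim(\log^+(x^2+\lambda^2))^{-1}$) and $x\in[\delta,1]$ (where $F(x,\lambda)\to F(x,0)=0$ uniformly). The paper states these two steps more tersely, but the argument is the same; your explicit verification of the cancellation in $N(x,\lambda)$ is exactly what the paper's one-line reference to the asymptotics is asserting.
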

\begin{proof}
As it follows from the previous section, we only need to show
\begin{equation}\label{selfi}
\|F(x,\lambda)\|_{L^\infty[0,1]}=o(1), \quad \lambda\to 0
\end{equation}
Recall the definition of $F$,
\[
F(x,\lambda)=\frac{1}{x\sqrt{\lambda^2+x^2}\log^+(x^2+\lambda^2)}\left(x\int_0^1
K_1(x,\tau,y_\lambda)d\tau-\sqrt{\lambda^2+x^2}\int_0^1
y_\lambda'(\tau)K_2(x,\tau,y_\lambda)d\tau\right)
\]
For any given $\delta>0$, we clearly have
\[
\lim_{\lambda\to 0}\|\omega_\delta^{c}\cdot F(x,\lambda)\|_{L^\infty[0,1]}=0
\]
For $x<\delta$, we can use the asymptotics established above
(e.g., \eqref{odnushka},  \eqref{treshka}, and \eqref{dvushka}).
This gives
\[
\|\omega_{\delta}\cdot F(x,\lambda)\|_{L^\infty[0,1]}\lesssim
\frac{1}{\log^+(\delta^2+\lambda^2)}
\]
These two estimates finish the proof of the lemma.
\end{proof}

\bigskip

\section{The Frechet differentiability.}

In this section, we study $Q(u)$ given by
\[
Q(u)=F(f,\lambda)-F(x,\lambda)-D_fF(x,\lambda)u, \quad f=x+u
\]
and prove \eqref{aaa2} and \eqref{aaa3}. We assume in this section
that $\lambda\in (0,1)$. Notice first that $Q(0)=0$ and therefore
\eqref{aaa2} follows from \eqref{aaa3}. Let us prove
\eqref{aaa3}.

We write
\[
Q(u_2)=F(x+u_2,\lambda)-F(x,\lambda)-D_fF(x,\lambda)(x+u_2)
\]
\[
Q(u_1)=F(x+u_1,\lambda)-F(x,\lambda)-D_fF(x,\lambda)(x+u_1)
\]
Subtract and write
\begin{eqnarray*}
|Q(u_2)-Q(u_1)|\leq
|F(x+u_2,\lambda)-F(x+u_1,\lambda)-D_fF(x+u_1,\lambda)(u_2-u_1)|
\\
+|\Bigl(D_fF(x+u_1,\lambda)-D_fF(x,\lambda)\Bigr)(u_2-u_1)|
\end{eqnarray*}
for every point $x\in (0,1]$.
Thus, we only have to prove two bounds:
\begin{equation}\label{adyn1}
\|F(x+u_2,\lambda)-F(x+u_1,\lambda)-D_fF(x+u_1,\lambda)(u_2-u_1)\|_{L^\infty[0,1]}=
o(1)\|u_2-u_1\|_{\dot Lip[0,1]}
\end{equation}
and
\begin{equation}\label{dwa}
\|D_fF(x+u,\lambda)-D_fF(x,\lambda)\|_{\dot Lip[0,1],L^\infty[0,1]}=o(1),
\quad \|u\|_{\dot Lip[0,1]}\leq \delta, \quad \delta\to 0
\end{equation}\bigskip

\subsection{The proof of \eqref{adyn1}} We start with proving
\eqref{adyn1}.\bigskip

Denote $\rho(x)=x+u_1(x)$. By our assumptions we have
\[
\|\rho'(x)-1\|_{L^\infty[0,1]}\leq \delta\ll 1, \quad \rho(0)=0
\]
Therefore,
\[
\rho(x)=x\left(1+\int_0^1 (\rho'(xt)-1)dt\right)=x(1+O(\delta))
\]
\bigskip

{\bf Remark.} We will use the following property many times in the
arguments below. Given arbitrary $M>0$, the scaled function
$\rho_M(\widehat x)=M\rho(M^{-1}\widehat x)$ satisfies:
\[
\rho_M(0)=0, \quad \|\rho'_M(\widehat x)-1\|_{L^\infty[0,M]}\leq
\delta
\]
Moreover, if $\|h-g\|_{\dot Lip[0,1]}\leq \epsilon$, then
$\|h_M-g_M\|_{\dot Lip[0,M]}\leq \epsilon$ after scaling.

\bigskip

Take $t\in \mathbb{R}$ with $|t|<t_0=\|u_2-u_1\|_{\dot Lip[0,1]}$ and
$f: \|f\|_{\dot Lip[0,1]}\leq 1$. Consider $f_t(x)=\rho(x)+tf(x)$.
We only need to show that
\begin{equation}\label{adyn}
\|F(f_t,\lambda)-F(f_0,\lambda)-tD_fF(f_0,\lambda)f\|_{L^\infty[0,1]}=to(1),
\quad t\to 0
\end{equation}
uniformly in $f$ and $\rho$.

Fix arbitrary $x\in (0,1]$ and apply the mean-value formula to
$F(f_t,\lambda)-F(f_0,\lambda)$,
\[
F(f_t,\lambda)-F(f_0,\lambda)=t\frac{P_1+\ldots+P_6}{x\sqrt{x^2+\lambda^2}\log^+(x^2+\lambda^2)}
\]
where $t_1(x)\in [0,t]$. Introducing
$Y_{\lambda,t}(x)=\sqrt{\lambda^2+f_t^2(x)}$, we get
\[
P_1=f(\rho'+t_1f')\int_0^1 K_1(x,\tau,Y_{\lambda,t_1})d\tau
\]
\[
P_2=(\rho+t_1f)f'\int_0^1 K_1(x,\tau,Y_{\lambda,t_1})d\tau
\]
\[
P_3=2(\rho+t_1f)(\rho'+t_1f')\left(X_1+\ldots+X_4\right)
\]
where
\[
X_1=\int_0^1 \frac{Y_{\lambda,t_1}(x)+Y_{\lambda,t_1}(\tau)}
{(x+\tau)^2+(Y_{\lambda,t_1}(x)+Y_{\lambda,t_1}(\tau))^2} \left(
\frac{f_{t_1}(x)f(x)}{Y_{\lambda,t_1}(x)}+\frac{f_{t_1}(\tau)f(\tau)}{Y_{\lambda,t_1}(\tau)}
\right)d\tau
\]
\[
X_2=\int_0^1 \frac{Y_{\lambda,t_1}(x)+Y_{\lambda,t_1}(\tau)}
{(x-\tau)^2+(Y_{\lambda,t_1}(x)+Y_{\lambda,t_1}(\tau))^2} \left(
\frac{f_{t_1}(x)f(x)}{Y_{\lambda,t_1}(x)}+\frac{f_{t_1}(\tau)f(\tau)}{Y_{\lambda,t_1}(\tau)}
\right)d\tau
\]
\[
X_3=-\int_0^1 \frac{Y_{\lambda,t_1}(x)-Y_{\lambda,t_1}(\tau)}
{(x-\tau)^2+(Y_{\lambda,t_1}(x)-Y_{\lambda,t_1}(\tau))^2} \left(
\frac{f_{t_1}(x)f(x)}{Y_{\lambda,t_1}(x)}-\frac{f_{t_1}(\tau)f(\tau)}{Y_{\lambda,t_1}(\tau)}
\right)d\tau
\]
\[
X_4=-\int_0^1 \frac{Y_{\lambda,t_1}(x)-Y_{\lambda,t_1}(\tau)}
{(x+\tau)^2+(Y_{\lambda,t_1}(x)-Y_{\lambda,t_1}(\tau))^2} \left(
\frac{f_{t_1}(x)f(x)}{Y_{\lambda,t_1}(x)}-\frac{f_{t_1}(\tau)f(\tau)}{Y_{\lambda,t_1}(\tau)}
\right)d\tau
\]
and
\[
P_4=-\frac{f_{t_1}f}{Y_{\lambda,t_1}}\int_0^1
Y'_{\lambda,t_1}(\tau)K_2(x,\tau,Y_{\lambda,t_1})d\tau
\]
\[
P_5=-Y_{\lambda,t_1}\int_0^1
\left(\frac{f_{t_1}f}{Y_{\lambda,t_1}}\right)'K_2(x,\tau,Y_{\lambda,t_1})d\tau
\]
\[
P_6=-2Y_{\lambda,t_1}(L_1+\ldots+L_4)
\]
Similarly, for $\{L_j\}$ we have
\[
L_1=\int_0^1
Y_{\lambda,t_1}'(\tau)\frac{Y_{\lambda,t_1}(x)+Y_{\lambda,t_1}(\tau)}
{(x+\tau)^2+(Y_{\lambda,t_1}(x)+Y_{\lambda,t_1}(\tau))^2} \left(
\frac{f_{t_1}(x)f(x)}{Y_{\lambda,t_1}(x)}+\frac{f_{t_1}(\tau)f(\tau)}{Y_{\lambda,t_1}(\tau)}
\right)d\tau
\]
\[
L_2=-\int_0^1
Y_{\lambda,t_1}'(\tau)\frac{Y_{\lambda,t_1}(x)+Y_{\lambda,t_1}(\tau)}
{(x-\tau)^2+(Y_{\lambda,t_1}(x)+Y_{\lambda,t_1}(\tau))^2} \left(
\frac{f_{t_1}(x)f(x)}{Y_{\lambda,t_1}(x)}+\frac{f_{t_1}(\tau)f(\tau)}{Y_{\lambda,t_1}(\tau)}
\right)d\tau
\]
\[
L_3=-\int_0^1
Y_{\lambda,t_1}'(\tau)\frac{Y_{\lambda,t_1}(x)-Y_{\lambda,t_1}(\tau)}
{(x-\tau)^2+(Y_{\lambda,t_1}(x)-Y_{\lambda,t_1}(\tau))^2} \left(
\frac{f_{t_1}(x)f(x)}{Y_{\lambda,t_1}(x)}-\frac{f_{t_1}(\tau)f(\tau)}{Y_{\lambda,t_1}(\tau)}
\right)d\tau
\]
\[
L_4=\int_0^1
Y_{\lambda,t_1}'(\tau)\frac{Y_{\lambda,t_1}(x)-Y_{\lambda,t_1}(\tau)}
{(x+\tau)^2+(Y_{\lambda,t_1}(x)-Y_{\lambda,t_1}(\tau))^2} \left(
\frac{f_{t_1}(x)f(x)}{Y_{\lambda,t_1}(x)}-\frac{f_{t_1}(\tau)f(\tau)}{Y_{\lambda,t_1}(\tau)}
\right)d\tau
\]

 We need to show that
\[
\left\|\frac{(P_1+\ldots+P_6)-(P_1^0+\ldots+P_6^0)}
{x\sqrt{\lambda^2+x^2}\log^+(x^2+\lambda^2)}\right\|_{L^\infty[0,1]}=o(1)
\]
as $t\to 0$ uniformly in $f$ and $\rho$. Here $P_j^0$ are the
similar expressions taken with $t_1=0$.  \vspace{1cm}

{\bf (1).} We start with $P_1-P_1^0$.
\[
\frac{1}{x\sqrt{x^2+\lambda^2}\log^+(x^2+\lambda^2)}\left|f(\rho'+t_1f')\int_0^1
K_1(x,\tau,Y_{\lambda,t_1})d\tau-f\rho'\int_0^1
K_1(x,\tau,Y_{\lambda,0})d\tau\right|
\]
\begin{equation}\label{snoska1}
\leq \frac{t}{\sqrt{x^2+\lambda^2}\log^+(x^2+\lambda^2)}\int_0^1
K_1(x,\tau,Y_{\lambda,t_1})d\tau
\end{equation}
\[
+\frac{1}{\sqrt{x^2+\lambda^2}\log^+(x^2+\lambda^2)} \left|\int_0^1
K_1(x,\tau,Y_{\lambda,t_1})d\tau-\int_0^1
K_1(x,\tau,Y_{\lambda,0})d\tau\right|
\]
To handle the first term, we use the lemma \ref{auxi}. The lemma
\ref{bbb1} below  takes care of the second term.

\begin{lemma}\label{bbb1}
We have
\[
\left\|\frac{\int_0^1 K_1(x,\tau,Y_{\lambda,t_1})d\tau-\int_0^1
K_1(x,\tau,Y_{\lambda,0})d\tau}{\sqrt{x^2+\lambda^2}\log^+(x^2+\lambda^2)}\right\|_{L^\infty[0,1]}=o(1),
\quad t\to 0
\]
uniformly in $\lambda$, $f$, and $\rho$.
\end{lemma}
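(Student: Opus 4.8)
The plan is to reduce the statement to a single pointwise comparison of kernels and then integrate. Precisely, I would prove that for all $x,\tau\in(0,1]$ and $\lambda\in(0,1]$,
\[
\bigl|K_1(x,\tau,Y_{\lambda,t_1})-K_1(x,\tau,Y_{\lambda,0})\bigr|\lesssim |t|\,K_1(x,\tau,Y_{\lambda,0}),
\]
with the implied constant independent of $f,\rho,\lambda$. Granting this, integrate in $\tau$ and apply Lemma~\ref{auxi} with $g=\rho$ (so that $Y_{\lambda,0}=\sqrt{\lambda^2+\rho^2(\tau)}$ and $\|\rho-x\|_{\dot Lip[0,1]}\le\delta$), which gives $\int_0^1 K_1(x,\tau,Y_{\lambda,0})\,d\tau\lesssim\sqrt{x^2+\lambda^2}\log^+(x^2+\lambda^2)$; dividing by $\sqrt{x^2+\lambda^2}\log^+(x^2+\lambda^2)$ then yields the bound $\lesssim|t|=o(1)$ as $t\to0$, uniformly in $\lambda,f,\rho$.

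For the pointwise comparison I would use the representation (legitimate since $Y_{\lambda,s}\ge0$)
\[
K_1(x,\tau,y)=\log\bigl(1+g_+(y)\bigr)+\log\bigl(1+g_-(y)\bigr),\qquad g_\pm(y)=\frac{4\,y(x)y(\tau)}{(x\pm\tau)^2+(y(x)-y(\tau))^2},
\]
together with the elementary fact that $|a-b|\le b/2$ implies $\bigl|\log\tfrac{1+a}{1+b}\bigr|\le\tfrac{|a-b|}{1+b/2}\le 2\,\tfrac{|a-b|}{b}\log(1+b)$ (from $\log(1+b)\ge b/(2+b)$). Hence it suffices to show that, as $y$ moves from $Y_{\lambda,0}$ to $Y_{\lambda,t_1}$, the numerator $4y(x)y(\tau)$ and the denominator $(x\pm\tau)^2+(y(x)-y(\tau))^2$ of $g_\pm$ change only by relative factors $1+O(|t|)$: then $|g_\pm(Y_{\lambda,t_1})-g_\pm(Y_{\lambda,0})|\lesssim|t|\,g_\pm(Y_{\lambda,0})$ and summing the two logarithms gives the claim. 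For the numerator this follows from $Y_{\lambda,t_1}(x)^2-Y_{\lambda,0}(x)^2=2t_1\rho(x)f(x)+t_1^2f(x)^2$ together with the Lipschitz data $f(0)=\rho(0)=0$, $\|f'\|_\infty\le1$, $\|\rho'-1\|_\infty\le\delta$, $|t|\ll1$ (so $|f_s(\xi)|\sim\xi$ and $Y_{\lambda,s}(\xi)\sim\sqrt{\xi^2+\lambda^2}$), which give
\[
\bigl|Y_{\lambda,t_1}(x)-Y_{\lambda,0}(x)\bigr|\lesssim|t|\,\frac{x^2}{\sqrt{x^2+\lambda^2}}\lesssim|t|\sqrt{x^2+\lambda^2},
\]
and similarly at $\tau$, combined with $Y_{\lambda,s}(x)Y_{\lambda,s}(\tau)\sim(x^2+\lambda^2)^{1/2}(\tau^2+\lambda^2)^{1/2}$.

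The denominator requires the quadratic cancellation
\[
\bigl|(Y_{\lambda,t_1}(x)-Y_{\lambda,t_1}(\tau))^2-(Y_{\lambda,0}(x)-Y_{\lambda,0}(\tau))^2\bigr|\lesssim|t|\,(x-\tau)^2,
\]
which is the step I expect to be the main obstacle: a crude term-by-term estimate of $g_\pm(Y_{\lambda,t_1})-g_\pm(Y_{\lambda,0})$ loses a whole power of $\sqrt{x^2+\lambda^2}$ near the diagonal $\tau=x$ and destroys integrability in $\tau$, so one genuinely needs that $(y(x)-y(\tau))^2$ shifts only by $O(|t|)$ \emph{relative to} $(x-\tau)^2$. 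I would obtain it from the factorization
\[
Y_{\lambda,s}(x)-Y_{\lambda,s}(\tau)=\frac{(f_s(x)-f_s(\tau))(f_s(x)+f_s(\tau))}{Y_{\lambda,s}(x)+Y_{\lambda,s}(\tau)},
\]
the mean value theorem for $f_s=\rho+sf$ (giving $|f_s(x)-f_s(\tau)|\lesssim|x-\tau|$, hence $|f_s(x)^2-f_s(\tau)^2|\lesssim(x+\tau)|x-\tau|$ and $|Y_{\lambda,s}(x)-Y_{\lambda,s}(\tau)|\lesssim|x-\tau|$), the lower bound $Y_{\lambda,s}(x)+Y_{\lambda,s}(\tau)\gtrsim x+\tau$, and the identity $f_{t_1}=f_0+t_1f$, which confines the $s$-dependence of $f_s(x)^2-f_s(\tau)^2$ to $2t_1(\rho(x)f(x)-\rho(\tau)f(\tau))+t_1^2(f(x)^2-f(\tau)^2)$, of size $\lesssim|t|(x+\tau)|x-\tau|$. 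Writing $\Delta Y_s:=Y_{\lambda,s}(x)-Y_{\lambda,s}(\tau)$ and $\Delta Y_{t_1}-\Delta Y_0=\frac{N_{t_1}-N_0}{D_{t_1}}-\frac{N_0(D_{t_1}-D_0)}{D_0D_{t_1}}$ with $N_s=f_s(x)^2-f_s(\tau)^2$, $D_s=Y_{\lambda,s}(x)+Y_{\lambda,s}(\tau)$, these ingredients yield $|\Delta Y_{t_1}-\Delta Y_0|\lesssim|t||x-\tau|$, and multiplying by $|\Delta Y_{t_1}+\Delta Y_0|\lesssim|x-\tau|$ gives the displayed bound (note $(x\pm\tau)^2+(y(x)-y(\tau))^2\ge(x-\tau)^2$, so this is indeed a $1+O(|t|)$ relative change). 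Everything else is routine book-keeping; the $\lambda$-dependence never causes trouble because all estimates are made before dividing by $\sqrt{x^2+\lambda^2}\log^+(x^2+\lambda^2)$, that loss being recovered in one stroke by Lemma~\ref{auxi}.
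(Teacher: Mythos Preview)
Your argument is correct and takes a genuinely different route from the paper's. The paper's proof is a one-line reduction: it applies the mean value theorem in $t$ to write
\[
\int_0^1 K_1(x,\tau,Y_{\lambda,t_1})\,d\tau-\int_0^1 K_1(x,\tau,Y_{\lambda,0})\,d\tau=t_1(x)\bigl(\widehat X_1+\cdots+\widehat X_4\bigr),
\]
where the $\widehat X_j$ are the variation terms $X_j$ evaluated at some intermediate $t_2$, and then invokes the forward reference Theorem~\ref{real-lemma} (the second bound there) to control $(\widehat X_1+\cdots+\widehat X_4)/(\sqrt{x^2+\lambda^2}\log^+(x^2+\lambda^2))$. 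That theorem is a substantial piece of machinery developed to handle the $P_3$ term, so the paper is simply re-using it here. You instead prove the lemma from scratch by a pointwise multiplicative comparison $|K_1(x,\tau,Y_{\lambda,t_1})-K_1(x,\tau,Y_{\lambda,0})|\lesssim|t|\,K_1(x,\tau,Y_{\lambda,0})$, obtained by writing $K_1=\log(1+g_+)+\log(1+g_-)$ and showing that both the numerator $4y(x)y(\tau)$ and the denominators $(x\pm\tau)^2+(y(x)-y(\tau))^2$ of $g_\pm$ move only by relative factors $1+O(|t|)$; the key step, the quadratic cancellation $|(\Delta Y_{t_1})^2-(\Delta Y_0)^2|\lesssim|t|(x-\tau)^2$, is handled correctly via the factorization of $Y_s(x)-Y_s(\tau)$ and the Lipschitz data. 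Integration against $d\tau$ and Lemma~\ref{auxi} then close the argument. Your approach is more elementary and self-contained, and it delivers the sharper quantitative bound $O(|t|)$ rather than merely $o(1)$; the paper's approach has the organizational advantage of recycling Theorem~\ref{real-lemma}, which must be proved anyway for the $P_3$ analysis. One minor remark: $t_1=t_1(x)$ varies with $x$, but since your pointwise bound is uniform over all $t_1\in[0,t]$, this causes no difficulty.
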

\begin{proof}
By the mean-value formula we have
\[
\int_0^1 K_1(x,\tau,Y_{\lambda,t_1})d\tau-\int_0^1
K_1(x,\tau,Y_{\lambda,0})d\tau=t_1(x)(\widehat X_1+\ldots+\widehat
X_4)
\]
where the expressions $\widehat X_j$ are different from $X_j$
defined above only by $t_1$ replaced with $t_2$. The bound
\[
\left\|\frac{\widehat X_1+\ldots+\widehat
X_4}{\sqrt{x^2+\lambda^2}\log^+(x^2+\lambda^2)}\right\|_{L^\infty[0,1]}\lesssim
1
\]
follows from the theorem \ref{real-lemma} below.
\end{proof}

{\bf (2).} The term $P_2-P_2^0$ can be handled in exactly the same
way.\bigskip

{\bf (3).} The term $P_3-P_3^0$ is more complicated.\bigskip

Arguing similarly to $P_1$, we only need to prove the following
theorem.
\begin{theorem}\label{real-lemma}
\[
\left\|\frac{ (X_1+\ldots+ X_4)-(X_1^0+\ldots+
X_4^0)}{\sqrt{x^2+\lambda^2}\log^+(x^2+\lambda^2)}\right\|_{L^\infty[0,1]}=o(1),
\quad t\to 0
\]
and
\[
\left\|\frac{ X_1^0+\ldots+
X_4^0}{\sqrt{x^2+\lambda^2}\log^+(x^2+\lambda^2)}\right\|_{L^\infty[0,1]}
\lesssim 1
\]
uniformly in $\lambda$, $f$, and $\rho$.
\end{theorem}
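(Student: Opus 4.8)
The plan is to establish both displayed bounds in one pass, since they rest on the same computation. The starting point is the identity
\[
X_1+X_2+X_3+X_4=\tfrac12\int_0^1\delta K_1(x,\tau,Y_{\lambda,t_1})\,d\tau,\qquad X_1^0+\dots+X_4^0=\tfrac12\int_0^1\delta K_1(x,\tau,Y_{\lambda,0})\,d\tau,
\]
where $\delta K_1$ is the first variation of $K_1$ already written down in Section 5, now taken at the profile $Y_{\lambda,s}=\sqrt{\lambda^2+f_s^2}$, $f_s=\rho+sf$, in the direction $\delta Y_{\lambda,s}=f_sf/Y_{\lambda,s}$; one has the convenient identities $\partial_sY_{\lambda,s}=\delta Y_{\lambda,s}$ and $\partial_s\delta Y_{\lambda,s}=\lambda^2f^2/Y_{\lambda,s}^3$. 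Collecting the four summands of $\delta K_1$ according to the factor $\delta Y(x)\pm\delta Y(\tau)$ each carries, I will write $\delta K_1=(C_++C_-)\,\delta Y(x)+(C_+-C_-)\,\delta Y(\tau)$ with $C_\pm=C_\pm(x,\tau,Y)$ the corresponding even/odd kernel combinations. The key structural fact to extract is that $C_+-C_-$ — the multiplier of the bad factor $\delta Y(\tau)$, for which only $|\delta Y(\tau)|\lesssim\tau$ holds — enjoys an extra power of decay in $\tau$; this is the perturbative shadow of the exact cancellations \eqref{miracle-1}, \eqref{miracle-2}.

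The main step is the self-similar reduction. Setting $x=\lambda\widehat x$, $\tau=\lambda\widehat\tau$ sends $Y_{\lambda,s}(\lambda\,\cdot\,)$ to $\lambda\widehat Y$ with $\widehat Y=\sqrt{1+\widehat f_s^2}$ and $\|\widehat f_s'-1\|_{L^\infty[0,1/\lambda]}\le\delta+|t|$, and turns both quotients into $\big(\sqrt{\widehat x^2+1}\,\log^+(\lambda^2(\widehat x^2+1))\big)^{-1}\int_0^{1/\lambda}\widehat{\delta K_1}\,d\widehat\tau$, using $\sqrt{x^2+\lambda^2}=\lambda\sqrt{\widehat x^2+1}\sim\lambda\widehat Y(\widehat x)$ and $\log^+(x^2+\lambda^2)\gtrsim\max(\log(1/\lambda),\log^+\widehat x)$. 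I then split $[0,1/\lambda]=[0,L]\cup[L,1/\lambda]$ with $L=\min\{\max(2\widehat x,2),1/\lambda\}$. On $[0,L]$ the only singular kernels are the $(x-\tau)$ ones, and the uniform Lipschitz bounds $|\widehat Y(\widehat x)-\widehat Y(\widehat\tau)|\lesssim|\widehat x-\widehat\tau|$, $|\widehat{\delta Y}(\widehat x)-\widehat{\delta Y}(\widehat\tau)|\lesssim|\widehat x-\widehat\tau|$ (together with $|\widehat x-\widehat\tau|\le\widehat x+\widehat\tau$ and $\widehat x\lesssim\widehat Y(\widehat x)$) bound each summand of $\widehat{\delta K_1}$ by an absolute constant, so this piece contributes $O(L)=O(\widehat Y(\widehat x))$, absorbed after dividing. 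On $[L,1/\lambda]$ I will Taylor-expand each kernel in $\widehat\tau^{-1}$ using $\widehat Y(\widehat\tau)=\widehat\tau g(\widehat\tau)+O(\widehat\tau^{-1})$, $g=\widehat f_s(\widehat\tau)/\widehat\tau$; the $\widehat x$-linear contributions cancel between the $(x+\tau)$- and $(x-\tau)$-kernels, leaving $C_++C_-=8g/(\widehat\tau(1+g^2))+O(\widehat\tau^{-3})$ and, crucially, $C_+-C_-=O(\widehat Y(\widehat x)\,\widehat\tau^{-2})$. Hence $\int_L^{1/\lambda}(C_++C_-)\widehat{\delta Y}(\widehat x)\,d\widehat\tau=O(\widehat{\delta Y}(\widehat x)\log^+(x^2+\lambda^2))$ and $\int_L^{1/\lambda}(C_+-C_-)\widehat{\delta Y}(\widehat\tau)\,d\widehat\tau=O(\widehat Y(\widehat x)\log(1/\lambda))$, both $\lesssim\sqrt{\widehat x^2+1}\,\log^+(x^2+\lambda^2)$; dividing by the prefactor gives the second (boundedness) bound. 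For $x\in[\delta,1]$ no scaling is needed: there $\log^+(x^2+\lambda^2)\gtrsim1$ and $\int_0^1\delta K_1\,d\tau$ is estimated directly, the $\tau=x$ singularity being integrable by the same cancellation.

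For the first (difference) bound I will apply the mean value theorem in $s$: $\int_0^1[\delta K_1(x,\tau,Y_{\lambda,t_1})-\delta K_1(x,\tau,Y_{\lambda,0})]\,d\tau=t_1\int_0^1\frac{\partial}{\partial s}\Big|_{s=s_\ast}\delta K_1(x,\tau,Y_{\lambda,s})\,d\tau$ for some $s_\ast\in[0,t_1]$. Since $\partial_sY_{\lambda,s}=\delta Y_{\lambda,s}$ and $\partial_s\delta Y_{\lambda,s}=\lambda^2f^2/Y_{\lambda,s}^3$ are Lipschitz in their arguments, $\partial_s\delta K_1$ has exactly the same shape as $\delta K_1$ — even/odd kernels paired with factors vanishing to first order on $\tau=x$ — so the whole near/far machinery applies to it verbatim, including the decisive decay $C_+-C_-=O(\widehat Y(\widehat x)\widehat\tau^{-2})$; one gets a bound $\lesssim|t_1|\,\sqrt{\widehat x^2+1}\,\log^+(x^2+\lambda^2)\le|t|\,(\cdots)$, i.e.\ $o(1)$ as $t\to0$, uniformly in $\lambda$, $f$, $\rho$. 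The delicate point throughout — and the part I expect to be the main obstacle — is precisely the far-region bookkeeping behind $C_+-C_-=O(\widehat Y(\widehat x)\widehat\tau^{-2})$: one must carry the expansion to two orders and check that both the would-be $\widehat\tau^{-1}$ term and the $\widehat x\,\widehat\tau^{-2}$ term cancel, so that the surviving coefficient grows only like $\widehat Y(\widehat x)$. That cancellation is exactly what tames the linear growth of $\widehat{\delta Y}(\widehat\tau)$ and drives both halves of the theorem.
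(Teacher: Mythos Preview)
Your outline is sound and the core mechanism matches the paper: rescale by $\lambda$, split near/far, and in the far region exploit that the coefficient of the growing factor $\delta Y(\widehat\tau)$ has extra decay. Your $C_\pm$ decomposition is exactly the paper's $B_1/B_2$ split inside Lemma~\ref{hard}. The organizational difference is that the paper does not keep $X_1+\dots+X_4=\tfrac12\int\delta K_1$ as one object; it pairs $X_1+X_3$ and $X_2+X_4$ and proves for each a stand-alone stability lemma (Lemmas~\ref{hard} and the one following it) formulated for abstract $y_1,y_2,\widetilde y_2$ with $\|y_2'-\widetilde y_2'\|_\infty\le\epsilon$. For $\widehat x>1$ the paper also rescales a second time by $\widehat x$ (the substitution \eqref{tuk}), which replaces your Taylor expansion in $\widehat\tau^{-1}$ on $[L,1/\lambda]$ by an expansion in the new variable $\alpha$ near $\infty$; both extract the same cancellation.

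The substantive difference is how you get the \emph{difference} estimate. You apply the mean-value theorem in $s$ and assert that $\partial_s\delta K_1$ ``has exactly the same shape'' as $\delta K_1$. The paper avoids second variations altogether: it proves $\|H-\widetilde H\|_\infty=o(1)$ by direct term-by-term comparison of the far-field asymptotics (bounds like \eqref{staab}) and by the elementary Lipschitz stability of Lemma~\ref{pust1} near the diagonal. Your route works, but $\partial_s\delta K_1$ actually produces three structurally distinct families of terms --- those quadratic in $\delta Y$, those linear in $\partial_s\delta Y=\lambda^2 f^2/Y^3$, and a quotient-rule piece with squared denominators --- and for each you must re-check that after summing the four $\pm$ contributions the leading $\widehat\tau^{-1}$ coefficient of $\delta Y(\widehat\tau)$ cancels (it does: for the quadratic type the sum collapses to $8\,\delta Y(\widehat x)\delta Y(\widehat\tau)/\widehat\tau^2$ at leading order, and similarly for the others). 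So the claim is correct but not literally ``the same shape,'' and the bookkeeping is a bit heavier than the paper's direct comparison. Your identification of the $C_+-C_-$ cancellation as the crux is exactly right.
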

\begin{proof}
Let us introduce $x=\lambda \widehat x$ and
$\tau=\lambda\widehat\tau$. Notice that
\[
Y_{\lambda,t}(\lambda \widehat
x)=\lambda\sqrt{1+(\lambda^{-1}f_t(\lambda \widehat x))^2}
\]
Let us focus of $X_1+X_3$ first. We are going to prove the following
general result. Once we do that, it suffices to apply it to the
scaled $X_1+X_3$  by taking $y_1(x)=f(x)$ and $y_2(x)=f_{t_1}(x)$.
\begin{lemma}\label{hard}
Suppose $y_{1}, y_2, \widetilde y_2\in \dot Lip[0,\lambda^{-1}]$ and
\[
\|y'_1\|_{L^\infty[0,1/\lambda]}\leq 1,\quad \|y'_2-\widetilde y'_2\|_{L^\infty[0,1/\lambda]}\leq
\epsilon, \quad \|y'_2-1\|_{L^\infty[0,1/\lambda]}\ll 1
\]
 If one defines
\[
H=\frac{1}{\sqrt{\widehat x^2+1}\log^+(\lambda^2(\widehat
x^2+1))}\int_0^{1/\lambda} \left(\left(\frac{y_1(\widehat
 x)y_2(\widehat x)}{\sqrt{1+y_2^2(\widehat x)}}+\frac{y_1(\widehat\tau)y_2(\widehat
\tau)}{\sqrt{1+y_2^2(\widehat\tau)}}\right)\right.
\]
\[
\times \frac{\sqrt{1+y_2^2(\widehat
x)}+\sqrt{1+y_2^2(\widehat\tau)}}{(\widehat x+\widehat
\tau)^2+(\sqrt{1+y_2^2(\widehat
x)}+\sqrt{1+y_2^2(\widehat\tau)})^2}
\]
\[
\left. -\left(\frac{y_1(\widehat x)y_2(\widehat
x)}{\sqrt{1+y_2^2(\widehat x)}}-\frac{y_1(\widehat\tau)y_2(\widehat
\tau)}{\sqrt{1+y_2^2(\widehat\tau)}}\right)\times \frac{\sqrt{1+y_2^2(\widehat
x)}-\sqrt{1+y_2^2(\widehat\tau)}}{(\widehat x-\widehat
\tau)^2+(\sqrt{1+y_2^2(\widehat x)}-\sqrt{1+y_2^2(\widehat\tau)})^2}
\right)d\widehat \tau
\]
and
\[
\widetilde H=\frac{1}{\sqrt{\widehat x^2+1}\log^+(\lambda^2(\widehat
x^2+1))}\int_0^{1/\lambda} \left(\left(\frac{y_1(\widehat
 x)\widetilde y_2(\widehat x)}{\sqrt{1+\widetilde y_2^2(\widehat x)}}+\frac{y_1(\widehat\tau)\widetilde y_2(\widehat
\tau)}{\sqrt{1+\widetilde y_2^2(\widehat\tau)}}\right)\right.
\]
\[
\times \frac{\sqrt{1+\widetilde y_2^2(\widehat
x)}+\sqrt{1+\widetilde y_2^2(\widehat\tau)}}{(\widehat x+\widehat
\tau)^2+(\sqrt{1+\widetilde y_2^2(\widehat x)}+\sqrt{1+\widetilde
y_2^2(\widehat\tau)})^2}
\]
\[
\left. -\left(\frac{y_1(\widehat x)\widetilde y_2(\widehat
x)}{\sqrt{1+\widetilde y_2^2(\widehat
x)}}-\frac{y_1(\widehat\tau)\widetilde y_2(\widehat
\tau)}{\sqrt{1+\widetilde
y_2^2(\widehat\tau)}}\right)\times \frac{\sqrt{1+\widetilde y_2^2(\widehat
x)}-\sqrt{1+\widetilde y_2^2(\widehat\tau)}}{(\widehat x-\widehat
\tau)^2+(\sqrt{1+\widetilde y_2^2(\widehat x)}-\sqrt{1+\widetilde
y_2^2(\widehat\tau)})^2} \right)d\widehat \tau
\]
then
\[
\|H-\widetilde H\|_{L^\infty[0,1/\lambda]}=o(1), \quad \epsilon\to 0
\]
and
\[
\|H\|_{L^\infty[0,1/\lambda]}\lesssim 1
\]
uniformly in $\lambda\in (0,1)$, $y_1$, $y_2$, and $\widetilde y_2$.
\end{lemma}
\begin{proof}
We will study $H$ in detail and, in particular, its stability in
$y_2$. That will give the necessary bounds. Notice first that
\begin{equation}\label{stab1}
\left|\frac{y_2(\widehat x)}{\sqrt{y_2^2(\widehat
x)+1}}-\frac{\widetilde y_2(\widehat x)}{\sqrt{\widetilde
y_2^2(\widehat x)+1}}\right|\lesssim \left\{
\begin{array}{cc}
\epsilon \widehat x, &\widehat x<1\\
\epsilon\widehat x^{-2}, &\widehat x>1
\end{array}
\right.
\end{equation}
The second term in the formula for  $H$ has the singularity of
the type $(\widehat x-\widehat\tau)^2$ in the denominator. However,
this is compensated by the zero in the numerator and
\[
\sup_{\widehat x}\left|\int_{|\widehat \tau-\widehat
x|<1}\left(\frac{y_1(\widehat x)y_2(\widehat
x)}{\sqrt{1+y_2^2(\widehat x)}}-\frac{y_1(\widehat\tau)y_2(\widehat
\tau)}{\sqrt{1+y_2^2(\widehat\tau)}}\right)\frac{\sqrt{1+y_2^2(\widehat
x)}-\sqrt{1+y_2^2(\widehat\tau)}}{(\widehat x-\widehat
\tau)^2+(\sqrt{1+y_2^2(\widehat x)}-\sqrt{1+y_2^2(\widehat\tau)})^2}
\right.
\]
\[
\left.
 -\left(\frac{y_1(\widehat x)\widetilde y_2(\widehat x)}{\sqrt{1+\widetilde y_2^2(\widehat x)}}
-\frac{y_1(\widehat\tau)\widetilde y_2(\widehat
\tau)}{\sqrt{1+\widetilde y_2^2(\widehat\tau)}}\right)\frac{\sqrt{1+\widetilde y^2_2(\widehat
x)}-\sqrt{1+\widetilde y_2^2(\widehat\tau)}}{(\widehat x-\widehat
\tau)^2+(\sqrt{1+\widetilde y^2(\widehat x)}-\sqrt{1+\widetilde y_2^2(\widehat\tau)})^2} d\widehat
\tau \right|=o(1)
\]
when $\epsilon\to 0$ as follows from the lemma \ref{pust1} in Appendix. Indeed,
\[
\left|\left(\frac{y_1(\widehat x)y_2(\widehat
x)}{\sqrt{1+y_2^2(\widehat x)}}-\frac{y_1(\widehat x)\widetilde
y_2(\widehat x)}{\sqrt{1+\widetilde y_2^2(\widehat
x)}}\right)'\right|\lesssim\left\{
\begin{array}{cc}
\epsilon \widehat x, &\widehat x<1\\
\epsilon, &\widehat x>1
\end{array}
\right.
\]
\[
\left|\left(\sqrt{1+y_2^2(\widehat x)}-\sqrt{1+\widetilde
y_2^2(\widehat x)}\right)'\right|\lesssim\left\{
\begin{array}{cc}
\epsilon \widehat x, &\widehat x<1\\
\epsilon, &\widehat x>1
\end{array}
\right.
\]
and
\[
\left|\left(\frac{y_1(\widehat x)y_2(\widehat
x)}{\sqrt{1+y_2^2(\widehat x)}}\right)'\right|\lesssim
\frac{\widehat x}{\widehat x+1},\quad
\left|\left(\sqrt{1+y_2^2(\widehat x)}\right)'\right|\lesssim
\frac{\widehat x}{\widehat x+1}
\]
Notice also that, in the expression for $H$, the integral over every
finite interval gives the bounded contribution after division by $\sqrt{\widehat x^2+1}\log^+(\lambda^2(\widehat
x^2+1))$. We also have its stability in $y_2$.  Therefore, we can focus on \mbox{$\widehat \tau:
|\widehat x-\widehat \tau|>1$} only. We consider two cases: $\widehat x\in (0,1]$ and $\widehat x\in
[1,\lambda^{-1}]$.\smallskip

(1). Let $\widehat x\in (0,1]$. Clearly, we can assume that
$\widehat\tau\gg 1$.   Let
\[
H=\frac{B_1+B_2}{\sqrt{\widehat x^2+1}\log^+(\lambda^2(\widehat
x^2+1))}
\]
where
\[
B_1= \frac{y_1(\widehat x)y_2(\widehat x)}{\sqrt{1+y^2_2(\widehat
x)}}\int_0^{1/\lambda} \left(\frac{\sqrt{1+y_2^2(\widehat
x)}+\sqrt{1+y_2^2(\widehat \tau)}} {(\widehat
x+\widehat\tau)^2+(\sqrt{1+y_2^2(\widehat x)}+\sqrt{1+y_2^2(\widehat
\tau)})^2}-\right.
\]

\[
\left. \frac{\sqrt{1+y_2^2(\widehat x)}-\sqrt{1+y_2^2(\widehat
\tau)}} {(\widehat x-\widehat \tau)^2+(\sqrt{1+y_2^2(\widehat
x)}-\sqrt{1+y_2^2(\widehat \tau)})^2}\right)d\widehat \tau
\]
and
\[
B_2=\int_0^{1/\lambda} \frac{y_1(\widehat \tau)y_2(\widehat
\tau)}{\sqrt{1+y_2^2(\widehat\tau)}}
\left(\frac{\sqrt{1+y_2^2(\widehat x)}+\sqrt{1+y_2^2(\widehat
\tau)}} {(\widehat x+\widehat \tau)^2+(\sqrt{1+y_2^2(\widehat
x)}+\sqrt{1+y_2^2(\widehat \tau)})^2}+ \right.
\]

\[
\left. \frac{\sqrt{1+y_2^2(\widehat x)}-\sqrt{1+y_2^2(\widehat
\tau)}} {(\widehat x-\widehat \tau)^2+(\sqrt{1+y_2^2(\widehat
x)}-\sqrt{1+y_2^2(\widehat \tau)})^2}\right)d\widehat \tau
\]
We only need to handle integration over $\widehat \tau\in [2,
1/\lambda]$.

Consider $B_2$ first. The integrand has asymptotics
\[
y_1(\widehat\tau)\left(2\sqrt{1+y_2^2(\widehat x)}(\widehat
\tau^2+y_2^2(\widehat\tau))^{-1}-4y_2(\widehat \tau)\frac{\widehat
x\widehat\tau+\sqrt{1+y_2^2(\widehat\tau)}\sqrt{1+y_2^2(\widehat
x)}}{(\widehat\tau^2+y_2^2(\widehat\tau))^2} \right)(1+O(\widehat
\tau^{-1}))
\]
Thus, we immediately have a bound
\[
|B_2|\lesssim \log^+ \lambda
\]
Comparing the integral with the one where $y_2$ is replaced by
$\widetilde y_2$ gives us the necessary stability estimate
\begin{equation}\label{staab}
\left|\int_1^{1/\lambda}
\frac{y_1(\widehat\tau)}{\widehat\tau^2+y_2^2(\widehat\tau)}d\widehat\tau-\int_1^{1/\lambda}
\frac{y_1(\widehat\tau)}{\widehat\tau^2+\widetilde
y_2^2(\widehat\tau)}d\widehat\tau\right|=O(\epsilon)\log^+\lambda
\end{equation}
and the same estimates are valid for other integrals involved. For
the remainder $O(\widehat\tau^{-1})$, the corresponding function is
bounded by $C\widehat\tau^{-2}$ and this decay is integrable giving a uniformly small number when integrated over $[T,1/\lambda]$ with large $T$. For the integral over any finite interval $\widehat
\tau\in[0,T]$, the stability easily follows.  Thus, we
first take $T$ large and then send $\epsilon\to 0$. This will ensure the
stability in $y_2$.\bigskip

For $B_1$, the estimates are very similar. The estimate
\eqref{stab1} gives the stability for the first factor
 \[\frac{y_1(\widehat x)y_2(\widehat x)}{\sqrt{1+y^2_2(\widehat
x)}}\] and the asymptotics of the integrand is
$
\displaystyle \frac{2y_2(\widehat\tau)}{\widehat\tau^2+y_2^2(\widehat\tau)}+O(\widehat\tau^{-2})
$.
Thus, we can use an estimate similar to (\ref{staab}).

\bigskip

(2). Consider the case $\widehat x>1$ now and assume that $|\widehat
x-\widehat\tau|>1$ in the integration.

For $\widehat\tau>1$ and $\widehat x>1$, we can write
\[
\sqrt{1+y_2^2(\widehat x)}=y_2(\widehat x)(1+O(\widehat x^{-2}))
\]
\begin{equation}\label{truk}
\sqrt{1+y_2^2(\widehat
x)}+\sqrt{1+y_2^2(\widehat\tau)}=(y_2(\widehat
x)+y_2(\widehat\tau))R_1^{-1}
\end{equation}
\[
\sqrt{1+y_2^2(\widehat
x)}-\sqrt{1+y_2^2(\widehat\tau)}=(y_2(\widehat
x)-y_2(\widehat\tau))R_1
\]
and
\[
R_1=1+O\left(\frac{1}{\widehat x\widehat\tau}\right)
\]
Let us control how the integral will change if we replace
$\sqrt{1+y_2^2(\widehat x)}$ by $y_2(\widehat x)$ and
$\sqrt{1+y_2^2(\widehat x)}+\sqrt{1+y_2^2(\widehat\tau)}$ by
$y_2(\widehat x)+y_2(\widehat\tau)$. The errors produced in $B_2$,
for example, are at most
\[
C_1+C_2\int_1^{\lambda^{-1}}\left(\frac{1}{\widehat\tau}+\frac{1}{\widehat
x}\right)\frac{1}{|\widehat x-\widehat\tau|+1}d\widehat\tau\lesssim
1+\frac{\log\widehat x+\log^+ \lambda}{\widehat x}
\]
The estimate for $B_1$ is the same. Now, notice that
\[
\sup_{\widehat x>T, \lambda\in (0,1)}\frac{\log\widehat
x+\log^+ \lambda+\widehat x}{\widehat x^2\log^+(\lambda^2(\widehat
x^2+1))}\lesssim \frac{1}{\sqrt T}\to 0, \quad T\to \infty
\]
Since on every finite interval of integration $\widehat\tau\in
[0,T]$ we have stability in $y_2$, we  only need to handle
\[
\int_{\widehat\tau\in [0,\lambda^{-1}]} \left|\frac{(y_1(\widehat
x)+y_1(\widehat \tau))(y_2(\widehat x)+y_2(\widehat
\tau))}{(\widehat x+\widehat \tau)^2+(y_2(\widehat x)+y_2(\widehat
\tau))^2}- \frac{(y_1(\widehat x)-y_1(\widehat \tau))(y_2(\widehat
x)-y_2(\widehat \tau))}{(\widehat x-\widehat \tau)^2+(y_2(\widehat
x)-y_2(\widehat \tau))^2} \right|d\widehat \tau
\]
Let us change the variable  $\widehat\tau=\widehat x\alpha$ and
introduce two functions:
\begin{equation}\label{tuk}
f(\alpha, \widehat x)=\widehat x^{-1}y_1(\alpha\widehat x),\quad
g(\alpha,\widehat x)=\widehat x^{-1}y_2(\alpha\widehat x)
\end{equation}
As before, we have $f(0,\widehat x)=g(0,\widehat x)=0$,
\[
|\partial_\alpha f(\alpha,\widehat x)|=|y_1'(\alpha\widehat x)|\leq
1, \quad |f(\alpha,\widehat x)|\leq \alpha
\]
and
\[
|\partial_\alpha g(\alpha,\widehat x)-1|=|y_2'(\alpha\widehat
x)-1|\lesssim 1,
\]
Moreover, if $\widetilde g$ is the scaling of $\widetilde y_2$, then
\[
\|g'-\widetilde g'\|_{L^\infty[0,1/\lambda]}\leq \epsilon
\]
These estimates are uniform in $\widehat x$. The integral takes the
form
\[
\widehat
x\int_0^{1/x}\left|\frac{(f(1)+f(\alpha))(g(1)+g(\alpha))}{(
1+\alpha)^2+(g(1)+g(\alpha))^2}-\frac{(f(1)-f(\alpha))(g(1)-g(\alpha))}{(
1-\alpha)^2+(g(1)-g(\alpha))^2}\right|d\alpha
\]
We can rewrite
\[
\frac{(f(1)-f(\alpha))(g(1)-g(\alpha))}{(
1-\alpha)^2+(g(1)-g(\alpha))^2}=\frac{\displaystyle
\frac{f(1)-f(\alpha)}{1-\alpha}\cdot
\frac{g(1)-g(\alpha)}{1-\alpha}}{\displaystyle
1+\left(\frac{g(1)-g(\alpha)}{1-\alpha}\right)^2}
\]
and the lemma \ref{pust1} proves stability for the interval
$|\alpha-1|<1$. Then, the stability in $g$ can be easily seen for
every interval $\alpha\in [0,T]$ given fixed $T$ as the
corresponding error is $o(1)\widehat x$ when $\epsilon\to 0$ and
\[
o(1)\sup_{\widehat x>1} \frac{\widehat x}{\sqrt{\widehat
x^2+1}\log^+(\lambda^2(\widehat x^2+1))}=o(1)
\]
For large $\alpha$, we get the asymptotics
\[
\frac{(f(1)+f(\alpha))(g(1)+g(\alpha))}{(
1+\alpha)^2+(g(1)+g(\alpha))^2}-\frac{(f(1)-f(\alpha))(g(1)-g(\alpha))}{(
1-\alpha)^2+(g(1)-g(\alpha))^2}=
\]
\[
\frac{-4f(\alpha)g(\alpha)(\alpha+g(1)g(\alpha))}{(\alpha^2+g^2(\alpha))^2}+
\frac{2(f(1)g(\alpha)+g(1)f(\alpha)}{\alpha^2+g^2(\alpha)}+
O(\alpha^{-2})
\]
The error $O(\alpha^{-2})$ is integrable and the comparison of the
leading terms to the analogous expressions with $g$ replaced by
$\widetilde g$ gives the error at most
\[
o(1)\int_1^{1/x} \frac{d\alpha }{\alpha}=o(1) \log^+x
\]
This leads to the error of the size
\[
o(1)\frac{\widehat x \log^+ x}{\sqrt{\widehat
x^2+1}\log^+(x^2+\lambda^2)}=o(1), \quad \epsilon\to 0
\]
uniformly in $\lambda$ and $x>\lambda$.
\end{proof}
Now, we need to handle the other combination: $X_2+X_4$. The
analysis here is nearly identical and is based on the following
lemma.
\begin{lemma}Suppose $y_{1}, y_2, \widetilde y_2\in \dot Lip[0,\lambda^{-1}]$ and
\[
\|y'_1\|_{L^\infty[0,1/\lambda]}\leq 1,\quad \|y'_2-\widetilde y'_2\|_{L^\infty[0,1/\lambda]}\leq
\epsilon, \quad \|y'_2-1\|_{L^\infty[0,1/\lambda]}\ll 1
\]
If one defines
\[
H^{(1)}=\frac{1}{\sqrt{\widehat x^2+1}\log^+(\lambda^2(\widehat
x^2+1))}\int_0^{1/\lambda} \left(\left(\frac{y_1(\widehat
 x)y_2(\widehat x)}{\sqrt{1+y_2^2(\widehat x)}}+\frac{y_1(\widehat\tau)y_2(\widehat
\tau)}{\sqrt{1+y_2^2(\widehat\tau)}}\right)\right.
\]
\[
\times \frac{\sqrt{1+y_2^2(\widehat
x)}+\sqrt{1+y_2^2(\widehat\tau)}}{(\widehat x-\widehat
\tau)^2+(\sqrt{1+y_2^2(\widehat
x)}+\sqrt{1+y_2^2(\widehat\tau)})^2}
\]
\[
\left. -\left(\frac{y_1(\widehat x)y_2(\widehat
x)}{\sqrt{1+y_2^2(\widehat x)}}-\frac{y_1(\widehat\tau)y_2(\widehat
\tau)}{\sqrt{1+y_2^2(\widehat\tau)}}\right)\times\frac{\sqrt{1+y_2^2(\widehat
x)}-\sqrt{1+y_2^2(\widehat\tau)}}{(\widehat x+\widehat
\tau)^2+(\sqrt{1+y_2^2(\widehat x)}-\sqrt{1+y_2^2(\widehat\tau)})^2}
\right)d\widehat \tau
\]
and
\[
\widetilde H^{(1)}=\frac{1}{\sqrt{\widehat
x^2+1}\log^+(\lambda^2(\widehat x^2+1))}\int_0^{1/\lambda}\left(
\left(\frac{y_1(\widehat
 x)\widetilde y_2 (\widehat x)}{\sqrt{1+\widetilde y_2 ^2(\widehat x)}}+\frac{y_1(\widehat\tau)\widetilde y_2 (\widehat
\tau)}{\sqrt{1+\widetilde y_2 ^2(\widehat\tau)}}\right)\right.
\]
\[
 \times \frac{\sqrt{1+\widetilde y_2 ^2(\widehat
x)}+\sqrt{1+\widetilde y_2 ^2(\widehat\tau)}}{(\widehat x-\widehat
\tau)^2+(\sqrt{1+\widetilde y_2 ^2(\widehat x)}+\sqrt{1+\widetilde
y_2 ^2(\widehat\tau)})^2}
\]
\[
\left. -\left(\frac{y_1(\widehat x)\widetilde y_2 (\widehat
x)}{\sqrt{1+\widetilde y_2 ^2(\widehat
x)}}-\frac{y_1(\widehat\tau)\widetilde y_2 (\widehat
\tau)}{\sqrt{1+\widetilde y_2
^2(\widehat\tau)}}\right)\times \frac{\sqrt{1+\widetilde y_2 ^2(\widehat
x)}-\sqrt{1+\widetilde y_2 ^2(\widehat\tau)}}{(\widehat x+\widehat
\tau)^2+(\sqrt{1+\widetilde y_2 ^2(\widehat x)}-\sqrt{1+\widetilde
y_2 ^2(\widehat\tau)})^2} \right)d\widehat \tau
\]
then, uniformly in $y_{1}, y_2, \widetilde y_2$ and $\lambda\in
(0,1)$, we have
\[
\|H^{(1)}-\widetilde H^{(1)}\|_{L^\infty[0,1/\lambda]}=o(1), \quad
\epsilon\to 0
\]
and
\[
\|H^{(1)}\|_{L^\infty[0,1/\lambda]}\lesssim 1
\]
\end{lemma}
\begin{proof}The proof of this lemma repeats the argument for the previous
one word for word. The only minor change is contained in how we
handle the singularity in the denominator of $X_4$ when both $x$ and
$\tau$ go to zero. After the rescaling, we have an integral
\[
\left|\int_0^1\left(\frac{y_1(\widehat x)y_2(\widehat
x)}{\sqrt{1+y_2^2(\widehat x)}}-\frac{y_1(\widehat\tau)y_2(\widehat
\tau)}{\sqrt{1+y_2^2(\widehat\tau)}}\right)\frac{\sqrt{1+y_2^2(\widehat
x)}-\sqrt{1+y_2^2(\widehat\tau)}}{(\widehat x+\widehat
\tau)^2+(\sqrt{1+y_2^2(\widehat x)}-\sqrt{1+y_2^2(\widehat\tau)})^2}
d\widehat \tau\right|
\]
\[
\lesssim \int_0^1  \frac{|\widehat x-\widehat \tau|^2}{\widehat
x^2+\widehat\tau^2}d\widehat\tau\lesssim 1
\]
by the application of mean-value theorem. The stability of this
expression in $y_2$ follows from the lemma \ref{pust1}.
\end{proof}
This finishes the proof of theorem \ref{real-lemma}.
\end{proof}\bigskip
We continue now with the other terms: $P_4, P_5$ and $P_6$.\bigskip

{\bf (4).} Consider the term $P_4-P_4^0$. \smallskip

To study the stability in $t$, it is more convenient to rescale by
$\lambda$ and consider $y_1(\widehat x)=\lambda^{-1}f(\widehat x\lambda)$ and
$y_2(\widehat x)=\lambda^{-1}f_t(\widehat x\lambda)$. Then, the problem is
reduced to proving the stability of
\[
P_4=\frac{1}{\widehat x\sqrt{1+\widehat
x^2}\log^+(\lambda^2(\widehat x^2+1))}\frac{y_1(\widehat
x)y_2(\widehat x)}{\sqrt{1+y_2^2(\widehat x)}}\int_0^{1/\lambda}
\frac{y_2(\widehat
\tau)y_2'(\widehat\tau)}{\sqrt{1+y_2^2(\widehat\tau)}}K_2(\widehat
x,\widehat\tau,\sqrt{1+y_2^2(\widehat \tau}))d\widehat\tau
\]
in $y_2$. As before, we will be taking $\widetilde y_2$ with
$\|y_2'-\widetilde y_2'\|_{L^\infty[0,1/\lambda]}\leq \epsilon$ and making a
comparison.

By \eqref{stab1} and lemma \ref{auxi1}, we have
\[
\left|y_1(\widehat x)\left(\frac{y_2(\widehat
x)}{\sqrt{1+y_2^2(\widehat x)}}-\frac{\widetilde y_2(\widehat
x)}{\sqrt{1+\widetilde y^2(\widehat x)}}\right)\int_0^{1/\lambda}
\frac{y_2(\widehat
\tau)y_2'(\widehat\tau)}{\sqrt{1+y_2^2(\widehat\tau)}}K_2(\widehat
x,\widehat\tau,\sqrt{1+y_2^2(\widehat \tau}))d\widehat\tau\right|
\]
\[
\leq \epsilon \widehat x^3\log(1/\lambda), \quad \widehat x\in (0,1)
\]
and
\[
\leq \epsilon\log(1/x), \quad \widehat x>1
\]
Thus, after division, it gives an error at most $\epsilon$.

For the next term, \eqref{stab1} again gives
\[
\left|\frac{y_1(\widehat x)y_2(\widehat x)}{\sqrt{1+y_2^2(\widehat
x)}}\int_0^{1/\lambda} \left(\frac{y_2(\widehat
\tau)y_2'(\widehat\tau)}{\sqrt{1+y_2^2(\widehat\tau)}}-
\frac{\widetilde y_2(\widehat
\tau)\widetilde y_2'(\widehat\tau)}{\sqrt{1+\widetilde y_2^2(\widehat\tau)}}
\right)K_2(\widehat
x,\widehat\tau,\sqrt{1+y_2^2(\widehat \tau}))d\widehat\tau\right|
\]
\[
\lesssim \epsilon \widehat x\int_0^1 \widehat \tau |K_2(\widehat
x,\widehat\tau,\sqrt{1+y_2^2(\widehat\tau)})|d\widehat\tau+\epsilon\widehat
x\int_1^{1/\lambda}|K_2(\widehat
x,\widehat\tau,\sqrt{1+y_2^2(\widehat\tau)})|d\widehat\tau
\]
\[
\lesssim \epsilon \widehat x^2\log(1/\lambda), \quad \widehat x<1
\]
and
\[\lesssim \epsilon \widehat x^2\log(1/x), \quad \widehat x>1
\]
After division by $\widehat x \sqrt{\widehat x^2+1}\log^+(\lambda^2(\widehat x^2+1))$, it gives an error at most $O(\epsilon)$.

For the last term
\begin{equation}\label{staaa}
\frac{y_1(\widehat x)y_2(\widehat x)}{\sqrt{1+y_2^2(\widehat
x)}}\int_0^{1/\lambda} \frac{y_2(\widehat
\tau)y_2'(\widehat\tau)}{\sqrt{1+y_2^2(\widehat\tau)}}\Bigl(K_2(\widehat
x,\widehat\tau,\sqrt{1+y_2^2(\widehat \tau}))-K_2(\widehat
x,\widehat\tau,\sqrt{1+\widetilde y_2^2(\tau)})\Bigr) d\widehat\tau
\end{equation}
we can apply the mean value theorem and the resulting derivative of the kernel can be handled by the theorem \ref{real-lemma2} below.   As the result, the expression above can be bounded by
\[
\lesssim \widehat x^2\log(1/\lambda)o(1), \quad
\widehat x<1
\]
and
\[
\lesssim \widehat x^2\log(1/x)o(1),  \quad
\widehat x>1
\]
Upon division by
\[
\widehat x\sqrt{\widehat x^2+1}\log^+(\lambda^2(\widehat x^2+1))
\]
this is at most $o(1)$.\bigskip

{\bf (5).} The term $P_5-P_5^0$ can be estimated
similarly.\smallskip

Indeed, after scaling we have the following expression
\[
\sqrt{1+y_2^2(\widehat x)}\int_0^{1/\lambda}
\left(y_1'\frac{y_2}{\sqrt{1+y_2^2}}+
y_1y_2'(1+y_2^{2})^{-1.5}\right)K_2(\widehat
x,\widehat\tau,\sqrt{1+y_2^2(\widehat \tau}))d\widehat\tau
\]
and we can repeat the steps from the previous argument.\bigskip

{\bf (6).} We are left to handle $P_6-P_6^0$.\smallskip

This analysis is very similar to the one performed for $P_3$. However, we give details for completeness.

\begin{theorem}\label{real-lemma2}
\[
\left\|\frac{ Y_{\lambda,t}(L_1+\ldots+
L_4)-Y_{\lambda,0}(L_1^0+\ldots+
L_4^0)}{x\sqrt{x^2+\lambda^2}\log^+(x^2+\lambda^2)}\right\|_{L^\infty[0,1]}=o(1),
\quad t\to 0
\]
uniformly in $\lambda$.
\end{theorem}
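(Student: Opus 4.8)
The plan is to reproduce the proof of Theorem~\ref{real-lemma} essentially verbatim; the only new feature is that each $L_j$ equals the corresponding $X_j$ with the additional factor $Y'_{\lambda,t_1}(\tau)$ inserted into the integrand. Subtracting $P_6^0$ from $P_6$ and applying the mean-value formula in $t$, and recalling that for $f_t=\rho+tf$ one has $\|f_t'-f_0'\|_{L^\infty}\le|t|$, it is enough to prove weighted analogues of Lemma~\ref{hard} and of its companion lemma for $X_2+X_4$: after rescaling by $\lambda$ (set $x=\lambda\widehat x$, $\tau=\lambda\widehat\tau$, $y_1(\widehat x)=\lambda^{-1}f(\lambda\widehat x)$, $y_2(\widehat x)=\lambda^{-1}f_{t_1}(\lambda\widehat x)$, so that $Y_{\lambda,t_1}(\lambda\widehat x)=\lambda\sqrt{1+y_2^2(\widehat x)}$ and $Y'_{\lambda,t_1}(\widehat\tau)=y_2(\widehat\tau)y_2'(\widehat\tau)(1+y_2^2(\widehat\tau))^{-1/2}$ in the rescaled variable) one must show that the integral over $\widehat\tau\in[0,1/\lambda]$ of the kernels of Lemma~\ref{hard} carrying the extra weight $Y'_{\lambda,t_1}(\widehat\tau)$ is bounded by an absolute constant and is stable in $y_2$ with modulus $o(1)$ as $\epsilon:=\|y_2'-\widetilde y_2'\|_{L^\infty}\to0$. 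Applying this with $\widetilde y_2$ the scaling of $\rho=f_0$ then gives the theorem, since $\epsilon\lesssim|t|$.

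The point is that the extra weight is harmless. It is bounded by an absolute constant, since $0\le Y'_{\lambda,t_1}(\widehat\tau)\le|y_2'(\widehat\tau)|\cdot|y_2(\widehat\tau)|(1+y_2^2(\widehat\tau))^{-1/2}\le|y_2'(\widehat\tau)|\lesssim1$; it differs from its $\widetilde y_2$–counterpart by $O(\epsilon)$ uniformly and by $O(\epsilon\widehat\tau^{-2})$ for $\widehat\tau>1$, by exactly the computation behind \eqref{stab1}; and for $\widehat\tau>1$ it equals $y_2'(\widehat\tau)+O(\widehat\tau^{-2})$, a bounded factor that leaves untouched the $\widehat\tau^{-1}$ decay of the leading parts of the integrands of $L_1,L_2,L_4$ and the $\widehat\tau^{-2}$ decay of their remainders. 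Hence the argument of Lemma~\ref{hard} goes through term by term: one separates $\widehat x\in(0,1]$ from $\widehat x\in[1,1/\lambda]$; on the range $|\widehat x-\widehat\tau|>1$ one expands as in \eqref{truk}--\eqref{tuk}, estimates the $\widehat\tau^{-1}$–contributions by $\log^+\lambda$ (resp.\ $\log^+x$) and their $y_2$–increments by $O(\epsilon)\log^+\lambda$ (resp.\ $O(\epsilon)\log^+x$) — exactly as for $B_1,B_2$ in Lemma~\ref{hard}, now with the weight's stability folded in; and on any finite range $\widehat\tau\in[0,T]$ one gets a bounded, stable contribution, taking $T$ large first and $\epsilon\to0$ afterwards.

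The main obstacle, exactly as in Lemma~\ref{hard}, is the diagonal $\widehat\tau=\widehat x$ in the $L_3$ term, where the denominator vanishes to second order. It must be absorbed into the difference quotient in the numerator, which is now multiplied by the non-constant weight $Y'_{\lambda,t_1}(\widehat\tau)$. For $|\widehat x-\widehat\tau|<1$ I would invoke lemma~\ref{pust1} of the Appendix; the only new input is to check that the factor $\frac{y_1(\widehat x)y_2(\widehat x)}{\sqrt{1+y_2^2(\widehat x)}}\,Y'_{\lambda,t_1}(\widehat x)$ and its $\widetilde y_2$–analogue meet the Lipschitz hypotheses of that lemma, which follows from the product rule together with $|y_1'|\le1$, $|y_2'|\lesssim1$ and \eqref{stab1} — no second derivatives enter, only difference quotients. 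After these reductions, what remains is the bookkeeping of showing that the accumulated error factors in the two regimes multiply to $o(1)$ uniformly in $\lambda\in(0,1)$ and $x>\lambda$; this is identical to the end of the proof of Lemma~\ref{hard} and constitutes the bulk of the routine work.
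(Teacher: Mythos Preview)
There is a real gap. You treat $P_6$ exactly like $P_3$ and plan to re-run Lemma~\ref{hard} (and its $X_2+X_4$ companion) with the extra factor $Y'_{\lambda,t_1}(\tau)$, claiming that this weight is ``harmless''. The weight itself is indeed harmless, but you have overlooked that the \emph{prefactor} changes: in $P_3$ it is $(\rho+t_1 f)(\rho'+t_1 f')\sim x$, whereas in $P_6$ it is $Y_{\lambda,t_1}\sim\sqrt{x^2+\lambda^2}$. After division by $x\sqrt{x^2+\lambda^2}\log^+(x^2+\lambda^2)$, this leaves the normalization $\sqrt{x^2+\lambda^2}\log^+(x^2+\lambda^2)$ for the $X_j$'s but the \emph{stronger} normalization $x\log^+(x^2+\lambda^2)$ for the $L_j$'s. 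In the rescaled variable this is $\widehat x\log^+(\lambda^2(\widehat x^2+1))$ rather than $\sqrt{\widehat x^2+1}\log^+(\lambda^2(\widehat x^2+1))$, so for $\widehat x\ll 1$ your target bound is a factor $\widehat x^{-1}$ sharper than what a verbatim weighted version of Lemma~\ref{hard} delivers.

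With your grouping $L_1+L_3$ and $L_2+L_4$ this missing $\widehat x$ is not there: at $\widehat x=0$ one computes $(L_1+L_3)(0)=-(L_2+L_4)(0)\neq 0$ in general (the two pairs cancel only when summed together), and each is of order $\log^+\lambda$, so the quotient by $\widehat x\log^+$ blows up. The paper instead groups $L_3+L_4$ and $L_1+L_2$. In each of these pairs the two terms share the same numerator $(Y_x\mp Y_\tau)(A_x\mp A_\tau)$ and differ only by $(x\pm\tau)^2$ in the denominator; subtracting produces an explicit factor $-4\widehat x\widehat\tau$ (see the rewriting of $F$ in the proof of Lemma~\ref{ddd1}). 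That factor is precisely what compensates the $\widehat x$ in the new denominator and is the essential new cancellation in Theorem~\ref{real-lemma2}. Your outline does not capture it.
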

\begin{proof}
Rescale by $\lambda$ and rewrite the problem for $y_1$ and $y_2$, as
before. Notice first that
\[
|\sqrt{1+y_2^2}-\sqrt{1+\widetilde y_2^2}|\leq  \epsilon \widehat x^2,
\quad \widehat x<1
\]
and
\[
|\sqrt{1+y_2^2}-\sqrt{1+\widetilde y_2^2}|\leq  \epsilon \widehat x,
\quad \widehat x>1
\]
so we only need to show that
\[
\left\|\frac{ (L_1+\ldots+ L_4)-(L_1^0+\ldots+ L_4^0)}{
x\log^+(x^2+\lambda^2)}\right\|_{L^\infty[0,1]}=o(1), \quad t\to 0
\]
and
\begin{equation}\label{ccc1}
\left\|\frac{L_1^0+\ldots+ L_4^0}{
x\log^+(x^2+\lambda^2)}\right\|_{L^\infty[0,1]}\lesssim 1
\end{equation}

We group $(L_1+L_2)-(L_1^0+L_2^0)$ and $(L_3+L_4)-(L_3^0+L_4^0)$ and
start with the following lemma which handles $L_3+L_4$.

\begin{lemma}\label{ddd1}
Suppose $y_{1}, y_{2}, \widetilde y_2 \in \dot Lip[0,\lambda^{-1}]$ and
\[
\|y'_1\|_{L^\infty[0,1/\lambda]}\leq 1,\quad \|y'_2-\widetilde y_2'\|_{L^\infty[0,1/\lambda]}\leq \epsilon, \quad \|y'_2-1\|_{L^\infty[0,1/\lambda]}\ll 1
\]
If one defines
\[
U=\frac{1}{\widehat x\log^+(\lambda^2(\widehat
x^2+1))}\int_0^{1/\lambda}
\left|\frac{y_2(\widehat\tau)y_2'(\widehat\tau)}{\sqrt{1+y_2^2(\widehat\tau)}}
\left[ \left(\frac{y_1(\widehat
 x)y_2(\widehat x)}{\sqrt{1+y_2^2(\widehat x)}}-\frac{y_1(\widehat\tau)y_2(\widehat
\tau)}{\sqrt{1+y_2^2(\widehat\tau)}}\right)\times\right.\right.
\]
\[
\left( \frac{\sqrt{1+y_2^2(\widehat
x)}-\sqrt{1+y_2^2(\widehat\tau)}}{(\widehat x+\widehat
\tau)^2+(\sqrt{1+y_2^2(\widehat
x)}-\sqrt{1+y_2^2(\widehat\tau)})^2} \left.\left.
-\frac{\sqrt{1+y_2^2(\widehat
x)}-\sqrt{1+y_2^2(\widehat\tau)}}{(\widehat x-\widehat
\tau)^2+(\sqrt{1+y_2^2(\widehat x)}-\sqrt{1+y_2^2(\widehat\tau)})^2}
\right)\right]\right.
\]
\[
-
\frac{
\widetilde y_2(\widehat\tau)\widetilde y_2'(\widehat\tau)}{\sqrt{1+\widetilde y_2^2(\widehat\tau)}}
\left[ \left(\frac{y_1(\widehat
 x)\widetilde y_2(\widehat x)}{\sqrt{1+\widetilde y_2^2(\widehat x)}}-\frac{y_1(\widehat\tau)\widetilde y_2(\widehat
\tau)}{\sqrt{1+\widetilde y_2^2(\widehat\tau)}}\right)\times\right.
\]
\[
\left( \frac{\sqrt{1+\widetilde y_2^2(\widehat
x)}-\sqrt{1+\widetilde y_2^2(\widehat\tau)}}{(\widehat x+\widehat
\tau)^2+(\sqrt{1+\widetilde y_2^2(\widehat
x)}-\sqrt{1+\widetilde y_2^2(\widehat\tau)})^2} \left.\left.
-\frac{\sqrt{1+\widetilde y_2^2(\widehat
x)}-\sqrt{1+\widetilde y_2^2(\widehat\tau)}}{(\widehat x-\widehat
\tau)^2+(\sqrt{1+\widetilde y_2^2(\widehat x)}-\sqrt{1+\widetilde y_2^2(\widehat\tau)})^2}
\right)\right]\right|d\widehat\tau
\]

then, uniformly in $y_{1}, y_2, \widetilde y_2$ and $\lambda\in (0,1)$, we have
\[
\|U\|_{L^\infty[0,1/\lambda]}=o(1), \quad \epsilon\to 0
\]
\end{lemma}
Notice that in this lemma we take an absolute value inside the
integration as that will make an argument more transparent.
\begin{proof} We first prove that
\[
\left\|\frac{1}{\widehat x\log^+(\lambda^2(\widehat
x^2+1))}\int_0^{1/\lambda}
\left|\left(\frac{y_2(\widehat\tau)y_2'(\widehat\tau)}{\sqrt{1+y_2^2(\widehat\tau)}}-
\frac{\widetilde y_2(\widehat\tau)\widetilde y_2'(\widehat\tau)}{\sqrt{1+\widetilde y_2^2(\widehat\tau)}}\right)
\right.\right.\times
\]
\begin{equation}\label{prom1}
 \left(\frac{y_1(\widehat
 x)y_2(\widehat x)}{\sqrt{1+y_2^2(\widehat x)}}-\frac{y_1(\widehat\tau)y_2(\widehat
\tau)}{\sqrt{1+y_2^2(\widehat\tau)}}\right)\times \left(
\frac{\sqrt{1+y_2^2(\widehat
x)}-\sqrt{1+y_2^2(\widehat\tau)}}{(\widehat x+\widehat
\tau)^2+(\sqrt{1+y_2^2(\widehat
x)}-\sqrt{1+y_2^2(\widehat\tau)})^2}\right.
\end{equation}
\[
\left. \left.\left. -\frac{\sqrt{1+y_2^2(\widehat
x)}-\sqrt{1+y_2^2(\widehat\tau)}}{(\widehat x-\widehat
\tau)^2+(\sqrt{1+y_2^2(\widehat x)}-\sqrt{1+y_2^2(\widehat\tau)})^2}
\right)  \right| d\widehat
\tau\right\|_{L^\infty[0,\lambda^{-1}]}=o(1)
\]
as $\epsilon\to 0$, uniformly in parameters. Let us
observe that
\[
\left|\frac{y_2(\widehat\tau)y_2'(\widehat\tau)}{\sqrt{1+y_2^2(\widehat\tau)}}-
\frac{\widetilde y_2(\widehat\tau)\widetilde y_2'(\widehat\tau)}{\sqrt{1+\widetilde y_2^2(\widehat\tau)}}\right|
\lesssim \epsilon\widehat \tau, \quad \widehat\tau <1
\]
and
\[
\left|\frac{y_2(\widehat\tau)y_2'(\widehat\tau)}{\sqrt{1+y_2^2(\widehat\tau)}}-
\frac{\widetilde y_2(\widehat\tau)\widetilde y_2'(\widehat\tau)}{\sqrt{1+\widetilde y_2^2(\widehat\tau)}}\right|
\lesssim \epsilon, \quad \widehat\tau >1
\]
Therefore, to show (\ref{prom1}) it is sufficient to use an estimate
\eqref{chetyre} proved below,  and the following inequality
\[
\frac{1}{\widehat x\log^+(\lambda^2(\widehat
x^2+1))}\int_0^{1/\lambda}
\left|\frac{\widehat\tau}{\sqrt{\widehat\tau^2+1}}\times\right.
\]
\begin{equation}\label{prom22}
 \left(\frac{y_1(\widehat
 x)y_2(\widehat x)}{\sqrt{1+y_2^2(\widehat x)}}-\frac{y_1(\widehat\tau)y_2(\widehat
\tau)}{\sqrt{1+y_2^2(\widehat\tau)}}\right)\times \left(
\frac{\sqrt{1+y_2^2(\widehat
x)}-\sqrt{1+y_2^2(\widehat\tau)}}{(\widehat x+\widehat
\tau)^2+(\sqrt{1+y_2^2(\widehat
x)}-\sqrt{1+y_2^2(\widehat\tau)})^2}\right.
\end{equation}
\[
\left.\left. -\frac{\sqrt{1+y_2^2(\widehat
x)}-\sqrt{1+y_2^2(\widehat\tau)}}{(\widehat x-\widehat
\tau)^2+(\sqrt{1+y_2^2(\widehat x)}-\sqrt{1+y_2^2(\widehat\tau)})^2}
\right)  \right| d\widehat \tau\lesssim 1
\]
The latter can be achieved in a standard way by following, e.g, the estimates in the proof of \eqref{chetyre}.

Now, consider
\[
U^{(1)}=\frac{1}{\widehat x\log^+(\lambda^2(\widehat
x^2+1))}\int_0^{1/\lambda}
\frac{\widehat\tau}{\sqrt{1+\widehat\tau^2}} |F(\widehat x,\widehat
\tau)-F_0(\widehat x,\widehat \tau)|d\widehat \tau
\]
where
\[
F=
 \left(\frac{y_1(\widehat
 x)y_2(\widehat x)}{\sqrt{1+y_2^2(\widehat x)}}-\frac{y_1(\widehat\tau)y_2(\widehat
\tau)}{\sqrt{1+y_2^2(\widehat\tau)}}\right)\left(
\frac{\sqrt{1+y_2^2(\widehat
x)}-\sqrt{1+y_2^2(\widehat\tau)}}{(\widehat x+\widehat
\tau)^2+(\sqrt{1+y_2^2(\widehat
x)}-\sqrt{1+y_2^2(\widehat\tau)})^2}\right.
\]
\[
\left. -\frac{\sqrt{1+y_2^2(\widehat
x)}-\sqrt{1+y_2^2(\widehat\tau)}}{(\widehat x-\widehat
\tau)^2+(\sqrt{1+y_2^2(\widehat x)}-\sqrt{1+y_2^2(\widehat\tau)})^2}
\right)
\]
and
\[
F_0=
 \left(\frac{y_1(\widehat
 x)\widetilde y_2(\widehat x)}{\sqrt{1+\widetilde y_2^2(\widehat x)}}-\frac{y_1(\widehat\tau)\widetilde y_2(\widehat
\tau)}{\sqrt{1+\widetilde y_2^2(\widehat\tau)}}\right)\left(
\frac{\sqrt{1+\widetilde y_2^2(\widehat
x)}-\sqrt{1+\widetilde y_2^2(\widehat\tau)}}{(\widehat x+\widehat
\tau)^2+(\sqrt{1+\widetilde y_2^2(\widehat
x)}-\sqrt{1+\widetilde y_2^2(\widehat\tau)})^2}\right.
\]
\[
\left. -\frac{\sqrt{1+\widetilde y_2^2(\widehat
x)}-\sqrt{1+\widetilde y_2^2(\widehat\tau)}}{(\widehat x-\widehat
\tau)^2+(\sqrt{1+\widetilde y_2^2(\widehat x)}-\sqrt{1+\widetilde y_2^2(\widehat\tau)})^2}
\right)
\]
We are going to prove that
\begin{equation}\label{chetyre}
\|U^{(1)}\|_{L^\infty[0,1/\lambda]}=o(1), \quad \epsilon\to 0
\end{equation}

 Consider the case $\widehat x\in
[0,1]$. The regime  $\widehat x\to 0$ is what makes the difference
when compared to the same analysis for $P_3$. Take $F$ and rewrite it as follows
\[
F= -4\widehat x\widehat \tau
 \left(\frac{y_1(\widehat
 x)y_2(\widehat x)}{\sqrt{1+y_2^2(\widehat x)}}-\frac{y_1(\widehat\tau)y_2(\widehat
\tau)}{\sqrt{1+y_2^2(\widehat\tau)}}\right)\times
\frac{\sqrt{1+y_2^2(\widehat
x)}-\sqrt{1+y_2^2(\widehat\tau)}}{(\widehat x+\widehat
\tau)^2+(\sqrt{1+y_2^2(\widehat
x)}-\sqrt{1+y_2^2(\widehat\tau)})^2}
\]
\[
 \times\frac{1}{(\widehat x-\widehat
\tau)^2+(\sqrt{1+y_2^2(\widehat x)}-\sqrt{1+y_2^2(\widehat\tau)})^2}
\]
The lemma \ref{pust1} yields
\[
\int_\sigma^T |F-F_0|d\widehat \tau
=\widehat xo(1), \quad \epsilon\to 0
\]
for every fixed $T>\sigma>0$. For the integration over $[0,\sigma]$, we get
\[
\int_0^\sigma (|F|+|F_0|)d\widehat \tau \lesssim  \widehat x\int_0^\sigma \frac{(\widehat x+\widehat\tau)\widehat\tau}{\widehat x^2+\widehat \tau^2}d\widehat\tau\lesssim \widehat x\sigma
\]
This gives
\[
\int_0^T |F-F_0|d\widehat \tau =\widehat x
o(1), \quad \epsilon\to 0
\]
Now, for $\widehat x\in [0,1]$, the asymptotics for large $\widehat\tau$ are
\[
F=-\frac{4\widehat x \widehat\tau  y_1(\widehat\tau)y_2(\widehat\tau)}{(\widehat\tau^2+y_2^2(\widehat\tau))^2}+O(\widehat\tau^{-2}), \quad
F_0=-\frac{4\widehat x \widehat\tau  y_1(\widehat\tau)\widetilde y_2(\widehat\tau)}{(\widehat\tau^2+\widetilde y_2^2(\widehat\tau))^2}+O(\widehat\tau^{-2})
\]
and therefore
\[
\sup_{\widehat x\in [0,1]}\int_T^{1/\lambda}|F-F_0|d\widehat\tau=o(1)\widehat x \log^+ \lambda+CT^{-1}
\]
That shows $U^{(1)}$ is small uniformly in $\lambda$ and $\widehat x\in [0,1]$ as long as $\epsilon\to 0$.
Similarly, we can handle an interval $\widehat x\in [0,T]$
 with arbitrary large fixed $T$. In case of $\widehat x>T$, we can treat the interval $|\widehat \tau-\widehat x|<1$ using lemma \ref{pust1} as before. Outside this interval, we again use \eqref{truk} to get (compare with \eqref{tuk})
\[
\int_1^{1/\lambda}|F-F_0|d\widehat\tau \lesssim
\]
\[
 \widehat x \int_0^{1/x}u|f(1)-f(u)|\left|\frac{g(1)-g(u)}{((1+u)^2+(g(1)-g(u))^2)((1-u)^2+(g(1)-g(u))^2)} \right.
\]
\[\left.
-\frac{\widetilde g(1)-\widetilde g(u)}{((1+u)^2+(\widetilde g(1)-\widetilde g(u))^2)((1-u)^2+(\widetilde g(1)-\widetilde g(u))^2)}
  \right|du
+\log^+\lambda
\]
Computing the asymptotics at infinity, we obtain that the last quantity is
\[
o(1)\widehat x\log^+ x, \quad \epsilon\to 0
\]
Then,
\[
\sup_{\widehat x>T} \frac{o(1)\widehat x\log^+ x+\log^+\lambda}{\widehat x\log^+(\lambda^2(\widehat x^2+1))}=o(1)+T^{-1/2}
\]
as long as $T<\lambda^{-1/2}$. This bound proves that $U^{(1)}$ is small.

\end{proof}

The combination $L_1+L_2$ is handled similarly. We need the
following lemma for that.
\begin{lemma}\label{ddd2}
Suppose $y_{1}, y_{2}, \widetilde y_2 \in \dot Lip[0,\lambda^{-1}]$ and
\[
\|y'_1\|_{L^\infty[0,1/\lambda]}\leq 1,\quad \|y'_2-\widetilde y_2'\|_{L^\infty[0,1/\lambda]}\leq \epsilon, \quad \|y'_2-1\|_{L^\infty[0,1/\lambda]}\ll 1
\]
If one defines
\[
V=\frac{1}{\widehat x\log^+(\lambda^2(\widehat
x^2+1))}\int_0^{1/\lambda}
\left|\frac{y_2(\widehat\tau)y_2'(\widehat\tau)}{\sqrt{1+y_2^2(\widehat\tau)}}
\left[ \left(\frac{y_1(\widehat
 x)y_2(\widehat x)}{\sqrt{1+y_2^2(\widehat x)}}+\frac{y_1(\widehat\tau)y_2(\widehat
\tau)}{\sqrt{1+y_2^2(\widehat\tau)}}\right)\times\right.\right.
\]
\[
\left( \frac{\sqrt{1+y_2^2(\widehat
x)}+\sqrt{1+y_2^2(\widehat\tau)}}{(\widehat x+\widehat
\tau)^2+(\sqrt{1+y_2^2(\widehat
x)}+\sqrt{1+y_2^2(\widehat\tau)})^2} \left.\left.
-\frac{\sqrt{1+y_2^2(\widehat
x)}+\sqrt{1+y_2^2(\widehat\tau)}}{(\widehat x-\widehat
\tau)^2+(\sqrt{1+y_2^2(\widehat x)}+\sqrt{1+y_2^2(\widehat\tau)})^2}
\right)\right]\right.
\]
\[
-
\frac{
\widetilde y_2(\widehat\tau)\widetilde y_2'(\widehat\tau)}{\sqrt{1+\widetilde y_2^2(\widehat\tau)}}
\left[ \left(\frac{y_1(\widehat
 x)\widetilde y_2(\widehat x)}{\sqrt{1+\widetilde y_2^2(\widehat x)}}+\frac{y_1(\widehat\tau)\widetilde y_2(\widehat
\tau)}{\sqrt{1+\widetilde y_2^2(\widehat\tau)}}\right)\times\right.
\]
\[
\left( \frac{\sqrt{1+\widetilde y_2^2(\widehat
x)}+\sqrt{1+\widetilde y_2^2(\widehat\tau)}}{(\widehat x+\widehat
\tau)^2+(\sqrt{1+\widetilde y_2^2(\widehat
x)}+\sqrt{1+\widetilde y_2^2(\widehat\tau)})^2} \left.\left.
-\frac{\sqrt{1+\widetilde y_2^2(\widehat
x)}+\sqrt{1+\widetilde y_2^2(\widehat\tau)}}{(\widehat x-\widehat
\tau)^2+(\sqrt{1+\widetilde y_2^2(\widehat x)}+\sqrt{1+\widetilde y_2^2(\widehat\tau)})^2}
\right)\right]\right|d\widehat\tau
\]
then, uniformly in $y_{1}, y_2, \widetilde y_2$ and $\lambda\in (0,1)$, we have
\[
\|V\|_{L^\infty[0,1/\lambda]}=o(1), \quad \epsilon\to 0
\]
\end{lemma}

\begin{proof}The proof of this lemma is nearly identical. It is actually easier as the singularities in the denominator are absent.\end{proof}

The bound \eqref{ccc1} follows easily from the arguments given
in the proofs of lemmas \ref{ddd1} and \ref{ddd2}.
The proof of the theorem \ref{real-lemma2} is now finished.
\end{proof}
\bigskip
\subsection{The bound \eqref{dwa}}
The estimate \eqref{dwa} was in fact already proved in the previous
subsection. Indeed, recall \eqref{proizvol}. The derivative of $F$
involves six terms: $I_1+\ldots+I_6$.

For instance, $I_2$ gives the following operator
\[
\left(\frac{1}{x\sqrt{x^2+\lambda^2}\log^+(x^2+\lambda^2)}f\int_0^1 K_1(x,\tau,\sqrt{\lambda^2+f^2})d\tau\right)v'
\]
from $\dot Lip[0,1]$ to $L^\infty[0,1]$. Take $f=x+u$ where $\|u\|_{\dot
Lip[0,1]}\leq \epsilon$. Then, one needs to show that
\[
\sup_{\lambda\in (0,1], \|v\|_{\dot Lip[0,1]}\leq 1, \|u\|_{\dot Lip[0,1]}\leq  \epsilon}\left\|
\frac{1}{x\sqrt{x^2+\lambda^2}\log^+(x^2+\lambda^2)}\left(f\int_0^1 K_1(x,\tau,\sqrt{\lambda^2+f^2})d\tau\right.\right.
\]
\[
\left.\left.
-x\int_0^1 K_1(x,\tau,\sqrt{\lambda^2+x^2})d\tau\right)
 v'\right\|_{L^\infty[0,1]}=o(1), \quad \epsilon\to 0
\]
The proof of that, however, repeats the one for \eqref{snoska1} where
$\rho=x$. All other terms corresponding to $\{I_j\}_{j\neq 2}$ can
be handled similarly and that gives \eqref{dwa}.\bigskip

\section{The proof of the main theorem and regularity of solutions.}

We start with proving theorem \ref{main}.

\begin{proof}
 We can rewrite the equation \eqref{operat} as
\[
\psi={\cal O}\psi
\]
and the items (a), (b), and (c) stated on the same page were all justified. In particular,
we can choose sufficiently small $\delta$ and $\lambda_0$ such that
for every $\lambda\in (0,\lambda_0)$ the operator $\cal{O}$ has the unique fixed point in
 $\cal{B}_\delta=\{\psi: \|\psi\|_{\dot Lip[0,1]}\leq \delta\}$. It follows from the construction (and \eqref{why-small} in particular) that the solution
\[
y(x,\lambda)=\sqrt{\lambda^2+(x+\psi(x,\lambda))^2}
\]
converges to $|x|$ as $\lambda\to 0$. Moreover, one immediately has $y(x,\lambda)\in Lip[-1,1]$. Since $y$ is positive, one can substitute it to the equation and get $y\in C^1[-1,1]$. This regularity, however, will be significantly improved in the next theorem.
\end{proof}

{\bf Remark.} The self-similar behavior around the origin predicted by \eqref{predict} is an immediate corollary of \eqref{why-small}. \smallskip

Let us prove now that the solution $y(x,\lambda)$ is actually infinitely smooth.
\begin{theorem}
For every $\lambda\in (0,\lambda_0)$, we have $y(x,\lambda)\in C^\infty(-1,1)$.
\end{theorem}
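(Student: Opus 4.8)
The plan is to bootstrap regularity from the already-established fact $y(x,\lambda)\in C^1[-1,1]$ with $y>0$, using the integral equation \eqref{ura-2} as a "formula" that gains one derivative at a time. Fix $\lambda\in(0,\lambda_0)$ and suppress it from the notation. We know $y\in C^1$, $y$ is even and positive, and $\psi=f-x$ with $f=\sqrt{y^2-\lambda^2}$ lies in $\dot Lip[0,1]$; in particular $y'(x)$ exists and is continuous, and the two logarithmic kernels in \eqref{ura-2} are integrable for each fixed $x$ (the numerator vanishes to second order along the diagonal $\xi=x$, compensating the $(x-\xi)^{-2}$ singularity, exactly as exploited repeatedly in Sections 5--7). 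The equation \eqref{ur1} can be solved for $y'(x)$:
\[
y'(x)=\frac{\displaystyle\int_0^1 y'(\xi)\,K_2(x,\xi)\,d\xi}{\displaystyle\int_0^1 K_1(x,\xi)\,d\xi},
\]
and by Lemma~3.1 and the analysis of $A_1$ (Lemma 5.x, cf.\ \eqref{a10}) the denominator is bounded below by a positive constant on compact subsets of $(0,1)$, in fact on all of $[0,1]$ after normalization by $x\sqrt{x^2+\lambda^2}\log^+(x^2+\lambda^2)$; crucially, for $\lambda>0$ fixed the kernels $K_1(x,\xi,y),K_2(x,\xi,y)$ are $C^\infty$ functions of $(x,\xi)\in(-1,1)^2$ away from $\xi=\pm x$, and near those diagonals the singularity is only a $\log$ times a smooth remainder.

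The key step is a differentiation-under-the-integral argument showing that the right-hand side above is one degree smoother than the input regularity of $y$. Suppose inductively that $y\in C^k$ on a neighborhood of a point $x_0\in(0,1)$. Write $K_1,K_2$ in the form from Lemma~3.1, $K_i=(\text{smooth factor})\cdot(H'(\eta_1)+H'(\eta_2))$ with $H=\log$, so $H'(\eta)=1/\eta$ and the arguments $\eta_j=(x\pm\xi)^2+(y(x)\pm y(\xi))^2$ are bounded away from $0$ except when $x=\pm\xi$ and simultaneously $y(x)=\mp y(\xi)$, i.e.\ (using $y\ge\lambda>0$) never. Hence for $\lambda>0$ \emph{the kernels are genuinely smooth}, with no diagonal singularity at all! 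This is the decisive simplification relative to the $\lambda=0$ analysis: $(x-\xi)^2+(y(x)-y(\xi))^2\ge 0$ can only vanish when $x=\xi$ and $y(x)=y(\xi)$, but then the \emph{numerator} of the log is $(x+\xi)^2+(y(x)+y(\xi))^2=4x^2+4y^2(x)\ge 4\lambda^2>0$, so the integrand of \eqref{ura-2} has a true $\log$-singularity of the form $\log\big(c^2/((x-\xi)^2+(y(x)-y(\xi))^2)\big)$ near $\xi=x$ — integrable, and its $\xi$-integral against the smooth factor $(y'(x)-y'(\xi))$ (which vanishes at $\xi=x$) is differentiable in $x$ as many times as $y$ permits. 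Concretely, differentiating $\int_0^1(y'(x)-y'(\xi))K_{1,2}(x,\xi)d\xi$ in $x$ produces (i) terms with $\partial_x$ hitting the smooth-in-$x$ kernel factors and $y''(x)$ appearing outside the integral, all controlled by $y\in C^k$, and (ii) the boundary-type term from differentiating the $\log$ singularity, which after integration by parts (moving the derivative onto the $O(|x-\xi|)$ numerator) is again an absolutely convergent integral. One concludes $y'\in C^k$ near $x_0$, hence $y\in C^{k+1}$ near $x_0$; induction gives $y\in C^\infty$ on $(0,1)$, and by evenness on $(-1,0)$ as well. Finally, smoothness across $x=0$: again since $y(0)=\lambda>0$, the point $\xi=0=x$ is not a kernel singularity at all for the interior evaluation, and \eqref{ura-2} at $x$ near $0$ has all kernels smooth; the same differentiation scheme applies verbatim, using that $y$ is even (so $y'(0)=0$ and all odd derivatives vanish, but this is not even needed — only local smoothness of the integrand in $(x,\xi)$ near $(0,0)$ is used, which holds because $\eta_j\ge c>0$ there). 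Therefore $y\in C^\infty(-1,1)$.

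The main obstacle — and the only point requiring care — is the behavior of the integrand as $\xi\to x$ (and as $\xi\to-x$, including both near $x=\pm1$ where they collide with the endpoint and near $x=0$ where $x$ and $-x$ collide): one must verify that after formally differentiating in $x$ the resulting singularities remain integrable. The mechanism is uniform across all these regimes: the factor $(y'(x)-y'(\xi))$ contributes a zero of order one at $\xi=x$, each $x$-derivative falling on $\log\big((x-\xi)^2+(y(x)-y(\xi))^2\big)$ produces a factor like $(x-\xi)/((x-\xi)^2+(y(x)-y(\xi))^2)\sim 1/(x-\xi)$, and the product is a principal-value-type kernel of Calderón–Zygmund type against which the $C^{k}$-Hölder numerator integrates to a $C^{k-1}$ function; iterating, one keeps strict positivity of $y$ (which is preserved: $y\ge\lambda$) so no new singularity ever forms. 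I would package this as a single lemma ("if $g\in C^k[-1,1]$, $g>0$, then $x\mapsto\int_{-1}^1(g'(x)-g'(\xi))\log\big(\frac{(x+\xi)^2+(g(x)+g(\xi))^2}{(x-\xi)^2+(g(x)-g(\xi))^2}\big)d\xi$ belongs to $C^k(-1,1)$ and its value determines $g'$ as a $C^k$ function away from where the denominator of $A_1$ degenerates"), prove it by the integration-by-parts/differentiation-under-the-integral argument sketched above, and then run the bootstrap. No deep new estimate is needed — all the hard uniform-in-$\lambda$ work was done in Sections 5–7; here $\lambda$ is fixed and positive, which is exactly what makes everything smooth.
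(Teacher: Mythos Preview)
Your bootstrap scheme has a genuine gap at the base case $k=1$. You start from $y\in C^1$ only, and you need to reach $y\in C^2$; but your argument implicitly assumes more. Two places where this shows: first, you write that $(y'(x)-y'(\xi))$ ``contributes a zero of order one at $\xi=x$'' and later call the numerator ``$O(|x-\xi|)$''. That requires $y'\in Lip$, i.e.\ $y\in C^{1,1}$; with merely $y'\in C^0$ you have no rate, only $y'(x)-y'(\xi)=o(1)$. Second, when you formally differentiate $\int y'(\xi)\log\bigl((x-\xi)^2+(y(x)-y(\xi))^2\bigr)\,d\xi$ in $x$, the singular contribution is a kernel of size $\sim 1/|x-\xi|$ integrated against $y'(\xi)$. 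For this principal-value integral to even exist (let alone be continuous in $x$) one needs at least H\"older regularity of $y'$; continuity alone is not enough. In particular the ``lemma'' you propose (``$g\in C^k$, $g>0$ $\Rightarrow$ the integral is in $C^k$'') is false for $k=1$: take the piece $\int_{-1}^1 g'(\xi)\log|x-\xi|\,d\xi$, whose $x$-derivative is the Hilbert transform of $g'$, and the Hilbert transform does not map $C^0$ to $C^0$.

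The paper's proof addresses exactly this difficulty, and by a different route. It regularizes $H(x)=\log x$ to $H_\delta(x)=\log\sqrt{\delta^2+x^2}$, reruns the contraction mapping to produce $y_\delta\in C^\infty$ with $y_\delta\to y$ uniformly, and then proves \emph{$\delta$-uniform} bounds on $\|y_\delta\|_{C^n[-a,a]}$ by an induction that includes a ``half-step'': from $y_\delta\in C^1$ (uniformly) one first bounds $\|y'_\delta\|_{C^{1/2}}$ uniformly via a self-improving inequality of the shape $\|y'_\delta\|_{C^{1/2}}\lesssim 1+(\|y'_\delta\|_{C^{1/2}})^{1/2}$, obtained from the decomposition $\log((x-\xi)^2+(y(x)-y(\xi))^2)=2\log|x-\xi|+\log(1+\Upsilon_y^2)$ together with Lemmas~\ref{loga} and~\ref{difff} and an interpolation bound; only then does one push to $C^2$, and then iterate. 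Arzel\`a--Ascoli closes the argument. The regularization is what makes the self-improving inequality legitimate: for $y_\delta$ the left side is finite a priori, which is exactly what you cannot assert for $y$ itself at the first step. If you want to salvage the direct approach, you would need a replacement for this initial jump from $C^1$ to $C^{1+\epsilon}$.
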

\begin{proof}

The bound (\ref{vza}) implies that $K_1(x,\xi,y)>0$ and thus $\int_{-1}^1 K(x,\xi,y)d\xi>0$ as well. We have
\begin{equation}\label{zhaba}
y'(x,\lambda)=\frac{\displaystyle \int_{-1}^1 y'(\xi,\lambda) K(x,\xi,y)d\xi}{\displaystyle \int_{-1}^1 K(x,\xi,y)d\xi}
\end{equation}
and one might want to differentiate this expression consecutively hoping to use the standard bootstrapping argument.
Recall that 
\[
K(x,\xi,y)=\log\left((x+\xi)^2+(y(x)+y(\xi))^2\right)-\log\left((x-\xi)^2+(y(x)-y(\xi))^2\right)
\]
and the first term presents no problem for bootstrapping as $\log$ is smooth on $(0,\infty)$ and  $(x+\xi)^2+(y(x)+y(\xi))^2$ is strictly positive. However, the second term
$\log\left((x-\xi)^2+(y(x)-y(\xi))^2\right)$ might be problematic. We will show now how to handle it. Notice that
all potentially singular integrals in \eqref{zhaba} can be written as
\begin{equation}\label{bad-term}
\int_{-1}^1 g(\xi)\log((x-\xi)^2+(y(x)-y(\xi)^2)d\xi
\end{equation}
where $g$ is either equal to $1$ or to $y'(\xi)$. The logarithm can be represented as 
\[
\log((x-\xi)^2+(y(x)-y(\xi)^2)=2\log|x-\xi|+\log\left(1+\left(\frac{y(x)-y(\xi}{x-\xi}\right)^2\right)
\]

 Suppose we fix $\lambda$ so small that the contraction mapping works. We take
$
H_\delta(x)=\log(\sqrt{\delta^2+x^2})
$ instead of $H(x)=\log x$ and denote the corresponding kernel by $K_\delta$. Then, in a similar way, one can prove the existence of $y_\delta(x,\lambda)$ and $y_\delta(x,\lambda)\to y(x,\lambda), \delta\to 0$ uniformly over $[-1,1]$. Since  $H_\delta\in C^\infty(-1,1)$, we immediately get $y_\delta(x,\lambda)\in C^\infty(-1,1)$ so the lemmas from the Appendix are applicable.
We want to obtain estimates on $\|y_\delta\|_{C^{n}[-a,a]}$ that are  uniform in $\delta$.

To this end,  proceed by induction. Our inductive assumption is that $\|y^{(n)}_\delta\|_{L^\infty[-b,b]}<C(n,b)$ with every $b:
b<1$, uniformly in $\delta$. The contraction mapping argument gives us this condition for $n=1$.
Now, let us show how to use the lemmas from the Appendix to cover $n=2$. We set $\epsilon=1/2$.

Consider
\begin{equation}
y_\delta'(x) P(x)=\int_{-1}^1 y'_\delta(\xi)K_\delta(x,\xi,y_\delta)d\xi \label{nyan}
\end{equation}
with
\[
\quad P(x)=\int_{-1}^1 K_\delta(x,\xi,y_\delta)d\xi
\]
Then,
\[
\Delta_{x_1,x_2}(y_\delta'P)=(\Delta_{x_1,x_2} y_\delta')P(x_1)+(\Delta_{x_1,x_2}P)y_\delta'(x_2)
\]
and so
\[
(\Delta_{x_1,x_2} y_\delta')P(x_1)=-(\Delta_{x_1,x_2}P)y_\delta'(x_2)+\Delta_{x_1,x_2}\left(\int_{-1}^1 K_\delta(x,\xi,y_\delta)d\xi\right)
\]
The first step is to show that $\|y_\delta'\|_{C^{1/2}[-b,b]}$ is bounded uniformly in $\delta$ for every $b<1$. To this end, it is sufficient to estimate $\Delta_{x_1,x_2}y'_\delta$.
Notice that $P$ is positive and so poses no problem. The factor $y_\delta'$ is uniformly bounded by the inductive assumption. Consider 
\begin{equation}\label{crest}
\Delta_{x_1,x_2}P,\quad\Delta_{x_1,x_2}\left(\int_{-1}^1 K_\delta(x,\xi,y_\delta)d\xi\right)
\end{equation}
and focus on the terms of the form \eqref{bad-term}. In $P$, the function 
\[
\int_{-1}^1 \log|x-\xi|d\xi 
\]
is smooth. For 
\[
\int_{-1}^1 \log\left(1+\left(\frac{y_\delta(x)-y_\delta(\xi}{x-\xi}\right)^2\right)d\xi
\]
we apply lemma \ref{difff} and an interpolation bound
\[
\sup_{x_1,x_2\in [-b,b]}\left|\frac{\Delta_{x_1,x_2}f}{|x_1-x_2|^{1/2}}\right|\lesssim \sqrt{\|f\|_{C^1[-b,b]}\|f\|_{C[-b,b]}}
\]
to get
\[
\sup_{x_1,x_2\in [-b,b]}\left|\frac{\Delta_{x_1,x_2}P}{|x_1-x_2|^{1/2}}\right|\lesssim \left(\|y_\delta\|_{C^{1.5}[-b,b]}\right)^{1/2}
\]
For the second function in \eqref{crest}, we argue similarly. The estimate \eqref{zanud} gives
\[
\sup_{x_1,x_2\in [-b,b]}\left|\frac{\Delta_{x_1,x_2}
\displaystyle \int_{-1}^1 y'_\delta(\xi)\log|x-\xi|d\xi
}{|x_1-x_2|^{1/2}}\right|\lesssim 1
\] 
by the inductive assumption. Therefore, we get
\[
\|y_\delta'\|_{C^{1/2}[-b,b]}\lesssim 1+\left(\|y_\delta'\|_{C^{1/2}[-b,b]}\right)^{1/2}
\]
which implies the uniform bound on $\|y_\delta'\|_{C^{1/2}[-b,b]}$ for any $b<1$.

Now, differentiate \eqref{nyan} to get
\[
y_\delta'' P+y_\delta' P'=\left(\int_{-1}^1 y'_\delta(\xi)K_\delta(x,\xi,y_\delta)d\xi\right)'
\]
We have $P'\in C[-b,b]$ by lemma \ref{difff}. Then, 
\[
\left(\int_{-1}^1 y'_\delta(\xi)K_\delta(x,\xi,y_\delta)d\xi\right)'\in C[-b,b]
\]
with bounds uniform in $\delta$ as follows from lemma \ref{difff} and \eqref{krot}. 
 That shows $\|y_\delta\|_{C^n[-b,b]}$ is bounded uniformly in $\delta$ for $n=2$.
 
For a general $n$, we argue similarly.
Differentiation \eqref{nyan} $(n-1)$ times gives 
 \[
y_\delta^{(n)}(x) P(x)+\Omega_{n-1}(x)=\partial^{(n-1)}_x\int_{-1}^1 y'_\delta(\xi)K_\delta(x,\xi,y_\delta)d\xi
\]
Using the inductive assumption, we first show that all norms $\|y_\delta\|_{C^{n+0.5}[-b,b]}$ are bounded uniformly in $\delta$. Then, we bootstrap that to $C^{n+1}$ norm.

Once the $\delta$-independent bounds for  $\|y_\delta\|_{C^{(n)}[-a,a]}$ are established, we can take $\delta\to 0$.
That gives $y(x,\lambda)\in C^{n}[-a,a]$ for every $n$. Indeed, there is a sequence $\{y_{\delta_j}\}\to u$ in $C^n[-a,a]$ by Arzela-Ascoli and so $u\in C^n[-a,a]$. However this includes the uniform convergence so $y=u$. Since $n$ is arbitrary, we get the statement of the theorem.
\end{proof}\smallskip

\section{Appendix.}
\begin{lemma}\label{pust1}
If $\|f'-g'\|_{L^\infty[0,T]}\leq \delta$, then
\[
\left|\frac{f(x)-f(y)}{x-y}-\frac{g(x)-g(y)}{x-y}\right|\leq \delta
\]
uniformly in $x,y\in [0,T]$.
\end{lemma}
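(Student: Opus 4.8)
The plan is to reduce everything to a single application of the mean value theorem to the difference $h = f - g$. First I would set $h(t) = f(t) - g(t)$, so that $h' = f' - g'$ and by hypothesis $\|h'\|_{L^\infty[0,T]} \leq \delta$. The quantity to be estimated is exactly
\[
\frac{f(x)-f(y)}{x-y}-\frac{g(x)-g(y)}{x-y} = \frac{h(x)-h(y)}{x-y},
\]
which is a difference quotient of $h$.

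Next I would apply the mean value theorem (valid since $h$ is differentiable on the closed interval $[0,T]$, being the difference of two functions whose derivatives lie in $L^\infty[0,T]$; if one prefers to avoid differentiability at endpoints one can instead write $h(x)-h(y) = \int_y^x h'(t)\,dt$ and bound the integral): there is a point $\xi$ strictly between $x$ and $y$ with
\[
\frac{h(x)-h(y)}{x-y} = h'(\xi).
\]
Taking absolute values and using $|h'(\xi)| \leq \|h'\|_{L^\infty[0,T]} \leq \delta$ gives the claimed bound, and the estimate is uniform in $x,y \in [0,T]$ since $\delta$ does not depend on the choice of points (the case $x=y$ being vacuous or handled by continuity). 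There is no real obstacle here; the only point worth a line of care is the justification that the difference quotient equals a value of $h'$, which is why I would phrase it via the integral representation of $h(x)-h(y)$ when $f,g$ are only assumed to have essentially bounded derivatives rather than everywhere-defined ones.
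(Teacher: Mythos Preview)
Your proof is correct and essentially identical to the paper's: the paper also reduces to the difference quotient of $h=f-g$ via the integral representation $\frac{h(x)-h(y)}{x-y}=\int_0^1 h'(y+(x-y)t)\,dt$, which is exactly the alternative you mention for the $L^\infty$-derivative setting.
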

\begin{proof}
Indeed, it follows from the following representation
\begin{equation}\label{cros}
\Upsilon_f(x,y)=\frac{f(x)-f(y)}{x-y}=\int_0^1 f'(y+(x-y)t)dt
\end{equation}
\end{proof}

The next lemmas are needed to show that the solution $y(x,\lambda)$ is infinitely smooth.
\begin{lemma}
\label{loga}
Suppose $f\in C^{\infty}(-1,1)$ and $0<a<b\leq 1$. Then, for every $\epsilon\in (0,1)$,
\begin{equation}\label{krot}
\left\|\int_{-1}^1 f(\xi) \log|x-\xi|d\xi               \right\|_{C^{n}[-a,a]}<C(n,a,b,\epsilon)(\|f\|_{C^{n-1+\epsilon}[-b,b]}+\|f\|_{L^\infty[-1,1]})
\end{equation}
and
\begin{equation}\label{zanud}
\left\|\int_{-1}^1 f(\xi) \log|x-\xi|d\xi               \right\|_{C^{n+\epsilon}[-a,a]}<C(n,a,b,\epsilon)(\|f^{(n)}\|_{L^\infty[-b,b]}+\|f\|_{L^\infty[-1,1]})
 \end{equation}
\end{lemma}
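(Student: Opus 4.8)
The plan is to localise the integral near the singularity $\xi=x$ and then reduce the local part to a classical mapping property of the Hilbert transform on Hölder classes. Fix $0<a<b\le 1$, choose auxiliary radii $a<a_1<a_2<b$, and a cut-off $\chi\in C^\infty_c(\mathbb R)$ with $\chi\equiv 1$ on $[-a_1,a_1]$ and $\operatorname{supp}\chi\subset[-a_2,a_2]$. Split
\[
\int_{-1}^1 f(\xi)\log|x-\xi|\,d\xi=g_1(x)+g_2(x),\qquad g_1(x)=\int_{-1}^1 f(\xi)\chi(\xi)\log|x-\xi|\,d\xi .
\]
For $x\in[-a,a]$ and $\xi\in\operatorname{supp}(1-\chi)$ one has $|x-\xi|\ge a_1-a>0$, so the kernel defining $g_2$ is smooth in $x$ there; differentiating under the integral sign gives $\|g_2\|_{C^m[-a,a]}\le C(m,a,a_1)\|f\|_{L^\infty[-1,1]}$ for every $m$, which accounts for the term $\|f\|_{L^\infty[-1,1]}$ in both \eqref{krot} and \eqref{zanud}. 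Note also that $g_1$ has integrand supported well inside $(-1,1)$, so no boundary effects at $\pm1$ enter.

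So the content is in $g_1$. Writing $h=f\chi\in C^\infty_c(-1,1)$, which equals $f$ on $[-a_1,a_1]\supset[-a,a]$, we have $g_1=h*\log|\cdot|\in C^\infty(\mathbb R)$ and, for $n\ge1$, $g_1^{(n)}=h^{(n-1)}*\operatorname{p.v.}\tfrac1{(\cdot)}=c_0\,\mathcal H\!\left(h^{(n-1)}\right)$ with $\mathcal H$ the Hilbert transform and $c_0$ a universal constant; in addition $\|g_1\|_{L^\infty[-a,a]}\lesssim\|f\|_{L^\infty[-b,b]}$ directly. The single analytic input I need is the classical fact that, for $0<\alpha<1$, $\mathcal H$ maps $C^\alpha$ functions supported in a fixed bounded interval boundedly into $C^\alpha(\mathbb R)$. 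For $x\in[-a,a]$ this can be made explicit through the representation
\[
g_1^{(n)}(x)=\int_{-a_2}^{a_2}\frac{h^{(n-1)}(\xi)-h^{(n-1)}(x)}{x-\xi}\,d\xi+h^{(n-1)}(x)\,\log\frac{a_2+x}{a_2-x},
\]
in which the first integrand is $\lesssim[h^{(n-1)}]_{C^\alpha}|x-\xi|^{\alpha-1}$ and the last summand is smooth in $x$ on $[-a,a]$; estimating the $C^\alpha$ seminorm of each piece is then routine. This boundedness of the truncated Hilbert transform on Hölder classes is the one step where the real work lies; everything else below is bookkeeping with cut-offs and the Leibniz rule.

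With this in hand both estimates follow. For \eqref{krot} with $n\ge1$, the Leibniz rule and $C^\epsilon\hookleftarrow C^{0,1}$-type bounds on bounded intervals give $\|h^{(n-1)}\|_{C^\epsilon(\mathbb R)}\lesssim\sum_{j=0}^{n-1}\|f^{(j)}\|_{C^\epsilon[-b,b]}\lesssim\|f\|_{C^{n-1+\epsilon}[-b,b]}$, hence $\|g_1^{(m)}\|_{L^\infty[-a,a]}\le|c_0|\,\|\mathcal H(h^{(m-1)})\|_{C^\epsilon(\mathbb R)}\lesssim\|f\|_{C^{n-1+\epsilon}[-b,b]}$ for $1\le m\le n$, which together with the $g_2$ bound yields \eqref{krot}. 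For \eqref{zanud}, the top Leibniz term of $h^{(n-1)}$ is $f^{(n-1)}\chi$, whose Lipschitz norm is controlled by $\|f^{(n)}\|_{L^\infty[-b,b]}$ and lower derivatives of $f$; the elementary one-dimensional Landau–Kolmogorov inequality on the fixed interval $[-b,b]$ gives $\|f^{(j)}\|_{L^\infty[-b,b]}\le C(n,b)\bigl(\|f^{(n)}\|_{L^\infty[-b,b]}+\|f\|_{L^\infty[-1,1]}\bigr)$ for $0\le j\le n$, so $\|h^{(n-1)}\|_{C^{0,1}(\mathbb R)}\lesssim\|f^{(n)}\|_{L^\infty[-b,b]}+\|f\|_{L^\infty[-1,1]}$. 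Since $h^{(n-1)}$ is compactly supported, $C^{0,1}\hookrightarrow C^\epsilon$ with a fixed constant, and applying the Hölder boundedness of $\mathcal H$ to $g_1^{(n)}=c_0\mathcal H(h^{(n-1)})$ and to the lower derivatives gives $\|g_1\|_{C^{n+\epsilon}[-a,a]}\lesssim\|f^{(n)}\|_{L^\infty[-b,b]}+\|f\|_{L^\infty[-1,1]}$ for $n\ge1$; the case $n=0$ is just the log-modulus continuity of the logarithmic potential of a bounded function. Combining with the bound for $g_2$ completes \eqref{zanud}.
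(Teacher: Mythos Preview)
Your proposal is correct and follows essentially the same approach as the paper: both differentiate the logarithmic potential to obtain a principal-value (Hilbert-type) integral, rewrite it as $\int \frac{f(x)-f(\xi)}{x-\xi}\,d\xi$ plus a smooth remainder, and bound this via the H\"older norm of the relevant derivative of $f$. The paper's proof is four lines---it simply asserts that ``the convolution structure of the kernel implies it is sufficient to prove the statement for $n=1$'' and then writes down the same integral representation you give---whereas you spell out the reduction explicitly via the cut-off $\chi$, the identity $g_1^{(n)}=c_0\,\mathcal H(h^{(n-1)})$, and (for \eqref{zanud}) Landau--Kolmogorov to control intermediate derivatives. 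Your localisation with $\chi$ is in fact a cleaner way to justify the iteration to higher $n$, since it kills boundary contributions at $\pm1$ that a global integration by parts would otherwise produce (and which are not controlled by $\|f\|_{L^\infty[-1,1]}$ alone); the paper leaves this step implicit.
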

\begin{proof}
The convolution structure of the kernel implies that it is sufficient to prove the statement for $n=1$ only. This amounts to checking that
\[
\left\|\int_{-1}^1 \frac{f(x)-f(\xi)}{x-\xi}d\xi\right\|_{C[-a,a]}\lesssim \|f\|_{C^{\epsilon}[-b,b]}+\|f\|_{L^\infty[-1,1]}
\]
which is trivial. The estimate \eqref{zanud} can be obtained in a similar way.
\end{proof}

\begin{lemma}\label{difff}
Suppose $f(x)\in C^\infty[-1,1]$ and $g(x)\in C[-1,1]$. Then
\[
\left\|\int_{-1}^1g(\xi)\log\left(1+\left(\frac{f(x)-f(\xi)}{x-\xi}\right)^2\right)d\xi\right\|_{C^1[-1,1]}<C_\epsilon\|f\|_{C^{1+\epsilon}[-1,1]}\|g\|_{C[-1,1]}
\]
with $C_\epsilon$ independent of $f$.
\end{lemma}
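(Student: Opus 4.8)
The plan is to differentiate the integral once and exploit that $u\mapsto\log(1+u^{2})$ has derivative $2u/(1+u^{2})$, bounded by $1$ in absolute value, so that the entire estimate reduces to an integrable bound on $\partial_{x}\Upsilon_{f}(x,\xi)$, where $\Upsilon_{f}(x,\xi)=\frac{f(x)-f(\xi)}{x-\xi}$ as in \eqref{cros}. Since $f\in C^{\infty}[-1,1]$, the representation \eqref{cros} shows $\Upsilon_{f}$ is $C^{\infty}$ in $x$ for every fixed $\xi$, with $\partial_{x}\Upsilon_{f}$ uniformly bounded by $\|f''\|_{L^{\infty}}$ on $[-1,1]^{2}$; hence
\[
\Phi(x):=\int_{-1}^{1}g(\xi)\log\bigl(1+\Upsilon_{f}(x,\xi)^{2}\bigr)\,d\xi
\]
is a genuine $C^{1}$ function and I may differentiate under the integral sign. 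The whole point of the lemma is that, after this differentiation, the resulting bound sees $f$ only through $\|f\|_{C^{1+\epsilon}[-1,1]}$.

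First I would settle the sup-norm: from the elementary inequality $\log(1+u^{2})\le|u|$, valid for all real $u$ because $1-2|u|+u^{2}=(|u|-1)^{2}\ge0$, together with $|\Upsilon_{f}(x,\xi)|\le\|f'\|_{L^{\infty}[-1,1]}$, one gets $\|\Phi\|_{C[-1,1]}\le 2\|g\|_{C[-1,1]}\|f'\|_{L^{\infty}[-1,1]}$. For the derivative,
\[
\Phi'(x)=\int_{-1}^{1}g(\xi)\,\frac{2\Upsilon_{f}(x,\xi)}{1+\Upsilon_{f}(x,\xi)^{2}}\,\partial_{x}\Upsilon_{f}(x,\xi)\,d\xi,\qquad\Bigl|\frac{2\Upsilon_{f}}{1+\Upsilon_{f}^{2}}\Bigr|\le1,
\]
so $|\Phi'(x)|\le\|g\|_{C[-1,1]}\int_{-1}^{1}|\partial_{x}\Upsilon_{f}(x,\xi)|\,d\xi$. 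The key step is the identity $\partial_{x}\Upsilon_{f}(x,\xi)=\dfrac{f'(x)-\Upsilon_{f}(x,\xi)}{x-\xi}$, combined with the observation that, since $f'(x)-\Upsilon_{f}(x,\xi)=\int_{0}^{1}\bigl(f'(x)-f'(\xi+t(x-\xi))\bigr)\,dt$ and $x-(\xi+t(x-\xi))=(1-t)(x-\xi)$, the Hölder seminorm of $f'$ yields
\[
\bigl|f'(x)-\Upsilon_{f}(x,\xi)\bigr|\le\frac{[f']_{C^{\epsilon}[-1,1]}}{1+\epsilon}\,|x-\xi|^{\epsilon},\qquad\text{hence}\qquad|\partial_{x}\Upsilon_{f}(x,\xi)|\le\frac{[f']_{C^{\epsilon}[-1,1]}}{1+\epsilon}\,|x-\xi|^{\epsilon-1}.
\]
The right-hand side is integrable, with $\int_{-1}^{1}|x-\xi|^{\epsilon-1}\,d\xi\le 2^{1+\epsilon}/\epsilon$ uniformly in $x\in[-1,1]$, so $\|\Phi'\|_{C[-1,1]}\le C_{\epsilon}\|g\|_{C[-1,1]}\|f\|_{C^{1+\epsilon}[-1,1]}$; adding the two estimates gives the claim, with a constant depending only on $\epsilon$.

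I do not expect a genuine obstacle here: the only subtlety is to recognize that the apparent $|x-\xi|^{-1}$ singularity of $\partial_{x}\Upsilon_{f}$ is in fact a weak, integrable one of order $|x-\xi|^{\epsilon-1}$ once the numerator $f'(x)-\Upsilon_{f}(x,\xi)$ is measured in the Hölder class $C^{\epsilon}$ — this is precisely why $C^{1+\epsilon}$, rather than $C^{1}$, is the natural norm on the right-hand side. The same device (differentiate once, use the convolution structure and a Hölder estimate of the numerator) is what underlies Lemma \ref{loga}, so the two appendix lemmas are proved by essentially the same mechanism.
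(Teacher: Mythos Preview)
Your proof is correct and follows essentially the same approach as the paper's. Both differentiate under the integral, use $|2\Upsilon_f/(1+\Upsilon_f^2)|\le 1$, and reduce to the bound $|\partial_x\Upsilon_f(x,\xi)|\lesssim [f']_{C^\epsilon}|x-\xi|^{\epsilon-1}$; you obtain this via the quotient-rule identity $\partial_x\Upsilon_f=(f'(x)-\Upsilon_f)/(x-\xi)$, while the paper differentiates the integral representation \eqref{cros} and integrates by parts in $t$, arriving at the same estimate.
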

\begin{proof}
We  write (\ref{cros}) and differentiate to get
\[
\left|\int_{-1}^1 g(\xi) \frac{2\Upsilon_f(x,\xi)}{1+\Upsilon_f^2(x,\xi)}\left(\int_0^1f''(\xi+(x-\xi)t)tdt \right)d\xi\right|
\]
\[
\lesssim \|g\|_{C[-1,1]}\int_{-1}^1\left|\int_0^1 \frac{\partial_t(f'(\xi+(x-\xi)t)-f'(\xi))}{x-\xi}tdt\right|d\xi
\]
\[
\lesssim \|g\|_{C[-1,1]}\int_{-1}^1 \frac{\|f'\|_{C^\epsilon[-1,1]}|x-\xi|^\epsilon}{|x-\xi|}d\xi\lesssim \epsilon^{-1}\|f\|_{C^{1+\epsilon}[-1,1]}\|g\|_{C[-1,1]}
\]
\end{proof}
By consecutive differentiation, one gets
\begin{lemma}\label{joke}
Suppose $f(x)\in C^\infty[-1,1]$ and $g(x)\in C[-1,1]$. Then
\[
\left\|\int_{-1}^1g(\xi)\log\left(1+\left(\frac{f(x)-f(\xi)}{x-\xi}\right)^2\right)
d\xi\right\|_{C^n[-1,1]}<\Bigl(C_n(\epsilon)
 \|f\|_{C^{n+\epsilon}[-1,1]}+F_n(\|f\|_{C^n[-1,1]})\Bigr)\|g\|_{C[-1,1]}
\]
where $F_n$ is a certain function of $\|f\|_{C^{n}[-1,1]}$ only.
\end{lemma}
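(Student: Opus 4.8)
The plan is to differentiate the integral $n$ times under the integral sign and to sort the resulting terms by the Fa\`a di Bruno formula, singling out the one term whose estimate genuinely needs the H\"older norm $\|f\|_{C^{n+\epsilon}}$; every other term will be controlled by $\|f\|_{C^n}$ alone, which is what the function $F_n$ absorbs. First I would set $\psi(u)=\log(1+u^2)$ and recall the representation \eqref{cros}, namely $\Upsilon_f(x,\xi)=\frac{f(x)-f(\xi)}{x-\xi}=\int_0^1 f'(\xi+(x-\xi)t)\,dt$, so that the integrand is $g(\xi)\,\psi(\Upsilon_f(x,\xi))$. Since $f$ is smooth, $\Upsilon_f$ extends smoothly across the diagonal $x=\xi$, so differentiation under the integral sign is legitimate (and is justified a posteriori by the $L^1_\xi$ bound obtained below); and since $g$ carries no $x$-dependence, the factor $\|g\|_{C[-1,1]}=\|g\|_{L^\infty[-1,1]}$ pulls out at every step.

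Next, for $1\le j\le n$ I would expand $\partial_x^{\,j}[\psi(\Upsilon_f)]$ as a finite sum of terms $\psi^{(k)}(\Upsilon_f)\prod_{i=1}^{k}\partial_x^{m_i}\Upsilon_f$ with $k\ge1$, $m_i\ge1$, $m_1+\dots+m_k=j$. Each $\psi^{(k)}$ with $k\ge1$ is a bounded rational function of its argument on $\mathbb{R}$, and $\partial_x^{m}\Upsilon_f(x,\xi)=\int_0^1 f^{(m+1)}(\xi+(x-\xi)t)\,t^{m}\,dt$, which for $m\le n-1$ is pointwise bounded by $\|f\|_{C^{n}[-1,1]}$. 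Hence every term with $k\ge2$, together with the lone term with $k=1$ and $m_1=j\le n-1$, is dominated pointwise by a polynomial expression in $\|f\|_{C^{n}[-1,1]}$; integrating against $|g|$ over $[-1,1]$ these yield the contribution $F_n(\|f\|_{C^n[-1,1]})\|g\|_{C[-1,1]}$. The only genuinely singular term is the top one, $\psi'(\Upsilon_f)\,\partial_x^n\Upsilon_f$, which occurs only when $j=n$, $k=1$. For it I would rewrite $\partial_x^n\Upsilon_f(x,\xi)=(x-\xi)^{-1}\int_0^1 \partial_t\bigl[f^{(n)}(\xi+(x-\xi)t)-f^{(n)}(\xi)\bigr]\,t^n\,dt$, integrate by parts, and use $|f^{(n)}(\xi+(x-\xi)t)-f^{(n)}(\xi)|\lesssim \|f\|_{C^{n+\epsilon}[-1,1]}\,|x-\xi|^\epsilon$ to obtain $|\partial_x^n\Upsilon_f(x,\xi)|\lesssim \|f\|_{C^{n+\epsilon}[-1,1]}\,|x-\xi|^{\epsilon-1}$. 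Since $|x-\xi|^{\epsilon-1}$ is integrable over $[-1,1]$ with integral $\lesssim\epsilon^{-1}$, and $|\psi'(\Upsilon_f)|$ is bounded, this term contributes at most $C_n(\epsilon)\|f\|_{C^{n+\epsilon}[-1,1]}\|g\|_{C[-1,1]}$. Adding the estimates for $j=1,\dots,n$ (the lower-order norms being absorbed by monotonicity) gives the assertion; this is precisely the direct generalization of the computation in Lemma~\ref{difff}, which is the case $n=1$.

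The step I expect to require the most care is the bookkeeping in the second paragraph: one must verify that, among all Fa\`a di Bruno terms, exactly one --- $\psi'(\Upsilon_f)\partial_x^n\Upsilon_f$ --- fails to be controlled by $\|f\|_{C^n}$ and therefore must be treated by the integrable-singularity argument, while every other term has each factor $\partial_x^{m_i}\Upsilon_f$ with $m_i\le n-1$ and hence is bounded through $f^{(n)}\in L^\infty$. A secondary point worth stating explicitly is the boundedness on $\mathbb{R}$ of all derivatives $\psi^{(k)}$ with $k\ge1$, which is what furnishes the uniform constants; the function $\psi$ itself is unbounded, but it never reappears once at least one $x$-derivative has been taken.
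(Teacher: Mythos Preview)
Your proposal is correct and follows exactly the approach the paper intends: the paper's own proof reads simply ``The proof is identical to the previous one,'' referring to Lemma~\ref{difff}, and your Fa\`a di Bruno bookkeeping is precisely the natural way to make ``consecutive differentiation'' explicit. In particular, your identification of $\psi'(\Upsilon_f)\,\partial_x^n\Upsilon_f$ as the sole term requiring $\|f\|_{C^{n+\epsilon}}$, handled via the same integration-by-parts trick as in Lemma~\ref{difff}, matches the paper's argument.
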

\begin{proof}
The proof is identical to the previous one.
\end{proof}
{\bf Remark.} The lemmas \ref{loga} and \ref{joke} will hold true if we replace $\log x$ by $\log \sqrt{x^2+\delta^2}$. The resulting estimates will be $\delta$ independent.

\section{Acknowledgment.}
This research was supported by NSF grants DMS-1067413 and FRG
DMS-1159133. The author thanks Kyudong Choi who made some suggestions that greatly improved the exposition of the paper.\bigskip

\end{document}